\title{First integral cohomology group of the pure mapping class group of a non-orientable surface of infinite type}
\date{}
\email{jhdez@matmor.unam.mx	}
\address{Centro de Ciencias Matemáticas, UNAM\newline
\indent Universidad Nacional Autónoma de México\newline
\indent Morelia, Mich. 58190\newline
\indent Mexico}
\urladdr{https://sites.google.com/site/jhdezhdez/}
\newtheorem{theorem}{Theorem}[section]
\newtheorem{lemma}[theorem]{Lemma}
\newtheorem{definition}[theorem]{Definition}
\newtheorem{corollary}[theorem]{Corollary}
\newtheorem{remark}[theorem]{Remark}
\newtheorem{Theo}{Theorem}
\newtheorem{Coro}[Theo]{Corollary}
\newcommand*{\fun}[3]{#1: #2 \rightarrow #3}
\newcommand*{\ccomp}[1]{\mathcal{C}(#1)}
\newcommand*{\adj}[1]{\mathcal{A}(#1)}
\newcommand*{\Mod}[1]{\mathrm{Map}(#1)}
\newcommand*{\PMod}[1]{\mathrm{PMap}(#1)}
\newcommand*{\PModc}[1]{\mathrm{PMap}_{c}(#1)}
\newcommand*{\PModcc}[1]{\overline{\mathrm{PMap}_{c}(#1)}}
\newcommand*{\Homeo}[1]{\mathrm{Homeo}(#1)}
\newcommand*{\ColonEqq}{\displaystyle \mathrel{\mathop:}=}
\newcommand*{\wrt}{with respect to }
\newcommand*{\id}{\mathrm{id}}
\newcommand*{\Aut}[1]{\mathrm{Aut}(#1)}
\newcommand*{\bnm}[1]{[\![#1]\!]}
\newcommand*{\prodhi}{\displaystyle \prod_{0 \leq i < r} \langle h_{i} \rangle}
\newcommand{\calC}{\mathcal C}
\newcommand{\calD}{\mathcal D}
\newcommand{\calH}{\mathcal H}
\DeclareMathOperator{\End}{\mathcal E}
\DeclareMathOperator{\Endg}{\End_{\infty}}
\DeclareMathOperator{\Endn}{\End_{-}}
\newcommand{\Z}{\ensuremath{\mathbb{Z}}}
\newcommand{\R}{\ensuremath{\mathbb{R}}}
\DeclareMathOperator\supp{supp}
\DeclareMathOperator{\Ker}{Ker}
\newcommand{\oshiftsup}{o\Sigma}
\newcommand{\nshiftsup}{n\Sigma}
\begin{document}
\maketitle
\begin{abstract}
 In this work we compute the first integral cohomology of the pure mapping class group of a non-orientable surface of infinite topological type and genus at least 3. To this purpose, we also prove several other results already known for orientable surfaces such as the existence of an Alexander method, the fact that the mapping class group is isomorphic to the automorphism group of the curve graph along with the topological rigidity of the curve graph, and the structure of the pure mapping class group as both a Polish group and a semi-direct product.\\[0.5cm]
 \textbf{Keywords:} Non-orientable surface; Big mapping class groups; First cohomology group.\\[0.5cm]
 \textbf{MSC 2020:} 57K20; 20J06; 20F65.
\end{abstract}

\section*{Introduction}

Let $N$ be a connected surface with empty boundary and define the mapping class group of $N$, denoted as $\Mod{N}$, as the group of isotopy classes of self-homeomorphisms of $N$. If $N$ is orientable, this is often called the extended mapping class group. The mapping class group has been studied for several decades now, and the most-commonly used tools for its study are the curves on the surface. This leads to the definition of the curve graph of $N$, denoted as $\ccomp{N}$: The curve graph is defined as the simplicial graph whose vertices are isotopy classes of essential simple closed curves on $N$. See Section \ref{sec1} for more details on these definitions. Given that $\Mod{N}$ naturally acts on $\ccomp{N}$, a lot of information and properties of $\Mod{N}$ have been obtained by studying this action.

If $N$ is an orientable surface of finite (topological) type, i.e. $N$ has finitely generated fundamental group, some of the results that have been proved are the following:
\begin{enumerate}
 \item There exists a collection of finitely many curves that completely determine a homeomorphism of $N$ up to isotopy. See Chapter 2 of \cite{FarbMargalit}.
 \item For all but finitely many surfaces, the action is rigid: every automorphism of $\ccomp{N}$ is induced by an element of $\Mod{N}$. See \cite{Ivanov}, \cite{Korkmaz}, \cite{Luo}.
 \item For all but finitely many surfaces, the first integral homology groups are finite groups depending on the surface. This implies that the first integral cohomology groups are trivial. See \cite{FarbMargalit} and the reference therein.
\end{enumerate}

If $N$ is a non-orientable surface of finite type, the same results are valid. However, in most of these cases either slight modifications of the proofs are needed as in (1), or whole new proofs are needed as in (2) and (3), and as such different papers have been dedicated to these results. See \cite{Paris}, \cite{Atalan-Korkmaz} and \cite{Stukow2010}.

On the other hand, for infinite-type surfaces there is a lot of work left to be done, particularly on the non-orientable case.

If $N$ is an orientable surface of infinite type, the same results as above have been proved recently: In \cite{HMV1} it is proved that there exists a locally finite collection of curves on $N$ that determine a homeomorphism up to isotopy. In \cite{HMV2} and \cite{BavardDowdallRafi} the respective authors proved independently that there is action rigidity along with topological rigidity. In \cite{AramayonaPatelVlamis} the respective authors compute the first integral cohomology group of the pure mapping class group (the subgroup of $\Mod{N}$ that acts trivially on the ends of the surface, denoted by $\PMod{N}$; see Section \ref{sec1} for more details) for the case that $N$ has genus at least two.

In this work we prove the analogous results. As mentioned before, many of the techniques used in this work are analogous to the orientable case. That said, in several cases the non-orientable nature of the surface forces the proofs to be different (particularly for the computation of the first integral cohomology group).

As such, the main results of our work are the following.

\begin{Theo}[Alexander Method]\label{Thm-AlexMethod}
 Let $N$ be a (possibly non-orientable) connected surface of infinite topological type. There exists a locally finite collection of essential, simple, closed curves $\Gamma = \{\gamma_{i}\}_{0 \leq i < \omega}$ that satisfies the following: If $h \in \mathrm{Homeo}(N)$ is such that for all $i \geq 0$, $h(\gamma_{i})$ is isotopic to $\gamma_{i}$, then $h$ is isotopic to the identity.
\end{Theo}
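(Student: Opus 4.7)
The plan is to mimic the orientable case of \cite{HMV1} by exhausting $N$ with finite-type subsurfaces and applying the finite-type Alexander method for non-orientable surfaces (\cite{Atalan-Korkmaz}, \cite{Stukow2010}) on each piece. Concretely, I would write $N = \bigcup_{k \geq 0} N_k$ where each $N_k$ is a connected compact subsurface with $N_k \subset \mathrm{int}(N_{k+1})$, every boundary component of $N_k$ is essential in $N$, and each connected component of $N \setminus \mathrm{int}(N_k)$ is non-compact with exactly one boundary circle coming from $\partial N_k$. Such an exhaustion exists by the classification of (possibly non-orientable) infinite-type surfaces.

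For each $k$, the finite-type Alexander method provides a finite set $\Gamma_k$ of essential simple closed curves in $N_k$ (including one-sided curves when needed for the non-orientable pieces) such that any self-homeomorphism of $N_k$ fixing every element of $\Gamma_k$ up to isotopy is isotopic to the identity. I would arrange that $\partial N_k \subset \Gamma_k$ for every $k$, so that any residual boundary Dehn twist is detected by the neighboring cuff. Setting $\Gamma = \bigcup_k \Gamma_k$, the collection is locally finite: any compact $K \subset N$ lies in some $N_m$, and hence $K$ meets only the curves belonging to $\Gamma_0 \cup \cdots \cup \Gamma_m$ together with the finitely many boundary curves cutting off $K$ from infinity.

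Now assume $h \in \Homeo(N)$ preserves the isotopy class of every $\gamma_i \in \Gamma$. Since $\partial N_0 \subset \Gamma$ and the components of $N \setminus N_0$ are distinguishable by their end structure, I can isotope $h$ to a homeomorphism that preserves $N_0$ setwise. The restriction $h|_{N_0}$ then fixes the isotopy class of every curve in $\Gamma_0$, so by the finite-type Alexander method it is isotopic to the identity on $N_0$. Using the isotopy extension theorem, I extend this to a global isotopy supported in a neighborhood of $N_0$, producing $h_0 \simeq h$ with $h_0|_{N_0} = \id$. Iterating on $N_1, N_2, \dots$, and at step $k$ requiring the correcting isotopy to be supported in $N \setminus N_{k-1}$ (so that earlier normalizations are preserved), I obtain a sequence $h_k \simeq h$ with $h_k|_{N_k} = \id$. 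Concatenating the isotopies with decreasing duration and using the local finiteness to ensure convergence in the compact-open topology yields the desired isotopy from $h$ to $\id$.

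The main obstacle is the last step: guaranteeing that the inductive correction at stage $k$ can be chosen to be supported outside $N_{k-1}$, and that the infinite concatenation of isotopies assembles into a continuous isotopy on all of $N$. This demands a careful use of the isotopy extension theorem in the non-orientable setting, together with a verification that any isotopy on $N_k$ rel $\partial N_k$ extends by the identity across a collar without introducing boundary twists that would escape the fixed curves in $\partial N_{k+1} \subset \Gamma$. The one-sided curves in $\Gamma_k$ require slightly different handling here, since their regular neighborhoods are Möbius bands rather than annuli, but the finite-type Alexander method for non-orientable surfaces already accommodates this, so no new ingredient is needed beyond bookkeeping.
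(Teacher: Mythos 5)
Your overall strategy---exhaust $N$ by finite-type subsurfaces, apply the finite-type Alexander method piece by piece, and concatenate the resulting isotopies---matches the paper's, but two concrete steps fail as written. First, the collection $\Gamma = \bigcup_{k} \Gamma_{k}$ is not locally finite. Each $\Gamma_{k}$ is a filling system for $N_{k}$, and since the $N_{k}$ are nested, every $\Gamma_{k}$ with $k \geq m$ necessarily contains curves passing through the fixed compact subsurface $N_{m}$; since there are infinitely many such $k$, infinitely many curves of $\Gamma$ meet $N_{m}$. The reasoning ``$K \subset N_{m}$, hence $K$ meets only $\Gamma_{0}\cup\cdots\cup\Gamma_{m}$'' overlooks this. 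The paper avoids the problem by placing the local curve systems $\Gamma_{j}$ on the pairwise disjoint \emph{shells} $\Sigma_{j}$ (the components of $N$ cut along the multicurve $B$ of exhaustion boundaries), so that $\Gamma = B \cup B^{*} \cup \bigcup_{j}\Gamma_{j}$ is automatically locally finite.

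Second, the version of the finite-type Alexander method that the iteration requires is the rel-boundary one (Theorem \ref{Thm-AM-finite}), which needs a system of curves \emph{and arcs}. Closed curves alone do not see boundary Dehn twists: any interior closed curve can be isotoped off a collar of $\partial N_{k}$, so a homeomorphism of $N_{k}$ fixing $\partial N_{k}$ pointwise and all closed curves up to isotopy is determined only up to boundary twists. Your inductive scheme needs each correcting isotopy to be supported in $N \setminus N_{k-1}$, which means the restriction to each shell must be isotoped to the identity \emph{rel} its inner boundary, and this is exactly where arcs become indispensable. Your remark about ``the neighboring cuff'' detecting the residual twist is the right intuition, but it is not carried out: one must actually produce, for each $\beta \in B$, an arc system on the shell, and this is what the paper's companion curves $\beta^{*}$ (with $i([\beta],[\beta^{*}]) \geq 2$, disjoint from all other boundary curves) supply. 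After Lemma \ref{Lemma-AM-Aux2} normalizes $h$ to be the identity on $B$, each $\beta^{*} \cap \Sigma_{j}$ is a proper arc, and $\Gamma_{j}$ together with these arcs satisfies Theorem \ref{Thm-AM-finite} rel $\partial\overline{\Sigma_{j}}$, yielding rel-boundary isotopies that glue. Some version of this device is unavoidable; it cannot be replaced by ``bookkeeping.''
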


This theorem is the analogue of Theorem 1.1 in \cite{HMV1}. For the proof, we delegate the case when $N$ is orientable to the aforementioned theorem, and focus only on the non-orientable case. That said, the proof in this case is analogous, as such we only sketch the proofs of the related lemmata and theorems, while highlighting the differences.

\begin{Theo}\label{Thm-Atalan-Korkmaz}
 Let $N_{1}$ and $N_{2}$ be two connected (possibly non-orientable) surfaces of infinite topological type, and let $\varphi: \ccomp{N_{1}} \to \ccomp{N_{2}}$ be an isomorphism. Then $N_{1}$ is homeomorphic to $N_{2}$, and $\varphi$ is induced by a homeomorphism $N_{1} \to N_{2}$.
\end{Theo}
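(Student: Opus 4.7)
The plan is to mirror the strategy used for orientable infinite-type surfaces in \cite{HMV2} and \cite{BavardDowdallRafi}: exhaust $N_1$ and $N_2$ by finite-type subsurfaces, apply the non-orientable finite-type rigidity theorem of Atalan--Korkmaz \cite{Atalan-Korkmaz} on each piece, and assemble the resulting homeomorphisms into a global one using the Alexander method of Theorem \ref{Thm-AlexMethod}. I would first fix principal exhaustions $N_j = \bigcup_{n} S_j^n$ by compact subsurfaces with essential, pairwise non-isotopic boundary components, so that each inclusion $\ccomp{S_j^n} \hookrightarrow \ccomp{N_j}$ is an embedding onto a full subgraph whose simplices are exactly the multicurves supported in $S_j^n$.

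Before finite-type rigidity can be invoked I need a purely combinatorial recognition principle: for a finite multicurve $F \subset \ccomp{N_1}$, the homeomorphism type of the subsurface filled by $F$, together with the one-sided/two-sided and separating/non-separating type of each curve in $F$, must be determined by the induced subgraph around $F$. The analogue in the orientable infinite-type case is the technical heart of \cite{HMV2}, and in the non-orientable finite-type case it is implicit in \cite{Atalan-Korkmaz} and \cite{Korkmaz}. Once established, applying this recognition to arbitrarily large $F$ and transporting the recovered invariants across $\varphi$ would simultaneously show that $N_1 \cong N_2$ (genus, orientability, and the end space being recoverable from $\ccomp{N_j}$) and, for each $n$, identify a finite-type subsurface $T_n \subset N_2$ with $T_n \cong S_1^n$ and $\ccomp{T_n} = \varphi(\ccomp{S_1^n})$.

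Applying \cite{Atalan-Korkmaz} to each pair $(S_1^n, T_n)$ then produces a homeomorphism $\phi_n : S_1^n \to T_n$ inducing $\varphi$ on $\ccomp{S_1^n}$. To pass to a single homeomorphism $\phi : N_1 \to N_2$ inducing $\varphi$, I would compare $\phi_{n+1}|_{S_1^n}$ with $\phi_n$: both induce the same map on $\ccomp{S_1^n}$, so the finite-type Alexander method (a consequence of Theorem \ref{Thm-AlexMethod} applied to $S_1^n$) forces them to differ by an isotopy. Pushing this isotopy off of $S_1^{n-1}$ by a collar argument and iterating yields a compatible system of homeomorphisms whose pointwise limit is the sought $\phi$; applying Theorem \ref{Thm-AlexMethod} once more to $N_1$ itself confirms that the resulting $\phi$ induces $\varphi$ globally.

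The main obstacle will be the combinatorial recognition step in the non-orientable setting. Links of one-sided curves and of two-sided curves behave quite differently, a separating curve may have non-orientable complementary pieces, and the list of topological types of small filling collections is strictly larger than in the orientable case; thus the invariants used in \cite{HMV2} to detect topological type must be reformulated, and new invariants must be introduced to detect one-sidedness and the presence of non-orientable complementary components. Once these subtleties are in place, the exhaustion-and-gluing argument is standard, and the orientable case of the theorem is deferred to \cite{HMV2, BavardDowdallRafi}.
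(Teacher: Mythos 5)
Your broad outline — exhaust by finite-type subsurfaces, apply Atalan--Korkmaz finite-type rigidity on each piece, and glue using the Alexander method — matches the paper's strategy. However, the intermediate combinatorial step you propose is considerably heavier than what the paper actually establishes, and this is precisely where the difference matters. You frame the key lemma as a full ``recognition principle'' that recovers, from the induced subgraph around a finite multicurve $F$, the homeomorphism type of the filled subsurface as well as the one-sided/two-sided and separating/non-separating type of each curve in $F$; you then flag the one-sided/two-sided distinction and the orientability of complementary pieces as the main obstacles. The paper sidesteps all of that. Its route is to fix a pants decomposition $P$ of $N_1$ containing the boundaries $B$ of a principal exhaustion, form the adjacency graph $\adj{P}$, and observe three much weaker facts, each with a purely simplicial characterization: multicurves (hence pants decompositions) are preserved by $\varphi$ because local finiteness is characterized in terms of intersection with a single curve (Lemma~\ref{Lemma-LocFinite} and Lemma~\ref{Lemma-Isom-Multicurves}); adjacency with respect to $P$ is preserved because it is detected by a single auxiliary curve (Remark~\ref{Remark-Adjacency}, Lemma~\ref{Lemma-Isom-Adjacency}); and non-outer separating curves are preserved because they are exactly the cut vertices of $\adj{P}$, with finite-type subsurfaces corresponding to finite subgraphs delimited by cut vertices (Remark~\ref{Remark-Non-Outer}, Lemma~\ref{Lemma-Isom-Separating}). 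None of these require distinguishing one-sided from two-sided curves combinatorially or reconstructing the topological type of the filled surface from the curve graph; that entire job is delegated to the finite-type rigidity theorem (Theorem~\ref{Thm-Finite-type-Atalan-Korkmaz}), which yields both $\Sigma_i \cong \Sigma_i'$ and the inducing homeomorphism in one stroke. In short, the ``invariants detecting one-sidedness and non-orientable complements'' you expect to need are never constructed, and trying to construct them would be substantially harder than the cut-vertex argument. If you pursued your recognition principle it could in principle work, but you have misidentified the technical heart and created obstacles that the paper's cleaner adjacency-graph route avoids entirely.

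One smaller point: you describe the finite-type Alexander method as ``a consequence of Theorem~\ref{Thm-AlexMethod} applied to $S_1^n$.'' It is the other way around — the finite-type statement is Theorem~\ref{Thm-AM-finite}, a separate classical ingredient used to \emph{prove} Theorem~\ref{Thm-AlexMethod}, and it is that finite-type version that is invoked in the gluing step to conclude $f_i = f_j|_{\mathrm{int}(N_i)}$.
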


This theorem is the analogue of Theorem 1.1 in \cite{HMV2} and Theorem 1.3 in \cite{BavardDowdallRafi}. The proof is analogue to the proof of Theorem 1.1 in \cite{HMV2}, and as such we only sketch the proofs of the related lemmata and theorems, while highlighting the differences both in arguments and in references needed.

Using Theorem \ref{Thm-AlexMethod} and Theorem \ref{Thm-Atalan-Korkmaz}, we obtain the following classical corollary.

\begin{Coro}\label{Cor-Atalan-Korkmaz}
 Let $N$ be a connected (possibly non-orientable) surface of infinite topological type. Then the natural map $\Psi: \Mod{N} \to \Aut{\ccomp{N}}$ is an isomorphism.
\end{Coro}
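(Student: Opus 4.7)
The plan is to prove injectivity and surjectivity of $\Psi$ separately, each reducing directly to one of the two preceding theorems. The map $\Psi$ itself is well-defined because homeomorphisms take essential simple closed curves to essential simple closed curves and preserve disjointness, and isotopic homeomorphisms induce the same permutation of isotopy classes.

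For injectivity, I would start with a mapping class $[h] \in \Mod{N}$ such that $\Psi([h]) = \id_{\ccomp{N}}$. This means that $h(\gamma)$ is isotopic to $\gamma$ for every vertex $\gamma$ of $\ccomp{N}$, i.e.\ for every isotopy class of essential simple closed curve on $N$. In particular, choosing a representative of each curve in the locally finite collection $\Gamma = \{\gamma_i\}_{0 \leq i < \omega}$ provided by Theorem \ref{Thm-AlexMethod}, we have that $h(\gamma_i)$ is isotopic to $\gamma_i$ for all $i$. Theorem \ref{Thm-AlexMethod} then immediately yields that $h$ is isotopic to the identity, so $[h]$ is trivial in $\Mod{N}$.

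For surjectivity, I would take an arbitrary $\varphi \in \Aut{\ccomp{N}}$ and apply Theorem \ref{Thm-Atalan-Korkmaz} to the case $N_1 = N_2 = N$, viewing $\varphi$ as an isomorphism $\ccomp{N} \to \ccomp{N}$. The theorem then supplies a homeomorphism $h \in \Homeo{N}$ that induces $\varphi$, i.e.\ such that $\Psi([h]) = \varphi$. This is exactly the statement that $\Psi$ is surjective.

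The main conceptual work has already been absorbed into the two preceding theorems, so no further obstacle is expected here; the only point to be careful about is to verify that $\Psi$ is a group homomorphism (composition of homeomorphisms corresponds to composition of induced graph automorphisms, and this is routine from the definitions). The argument is therefore essentially a two-line reduction, mirroring the classical orientable case.
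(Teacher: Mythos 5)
Your proposal is correct and follows exactly the route the paper intends: the paper itself offers no detailed proof, merely noting that the corollary follows from Theorem~\ref{Thm-AlexMethod} and Theorem~\ref{Thm-Atalan-Korkmaz}, and your argument makes that reduction explicit with injectivity coming from the Alexander method and surjectivity from the rigidity theorem applied with $N_1 = N_2 = N$.
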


This corollary is the analogue of Theorem 1.2 in \cite{HMV2}.

Now, as in the orientable case, $\Mod{N}$ has a natural topology which makes it a topological group: We equip $\mathrm{Homeo}(N)$ with the compact-open topology and then $\Mod{N}$ has the quotient of said topology.

On the other hand, Corollary \ref{Cor-Atalan-Korkmaz} tells us that pulling the permutation topology of $\Aut{\ccomp{N}}$, we can endow $\Mod{N}$ with a topology which makes it a Polish (separable and completely metrizable) topological group.

Using the same arguments as in the orientable case (see \cite{AramayonaPatelVlamis} and \cite{AramayonaVlamis}), we can see these two topologies coincide, and as such we have the following corollary.

\begin{Coro}\label{Cor-Top-Group-Iso}
 Let $N$ be a connected surface of infinite topological type. Arming $\Mod{N}$ with the quotient of the compact-open topology, and $\Aut{\ccomp{N}}$ with the permutation topology, then the natural map $\Psi: \Mod{N} \to \Aut{\ccomp{N}}$ is an isomorphism of topological groups. In particular, $\Mod{N}$ is a Polish group with the compact-open topology.
\end{Coro}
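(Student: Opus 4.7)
By Corollary \ref{Cor-Atalan-Korkmaz} the map $\Psi$ is already a group isomorphism, so my plan is to show that $\Psi$ and $\Psi^{-1}$ are continuous, following the Polish-group template of \cite{AramayonaPatelVlamis, AramayonaVlamis} with Theorem \ref{Thm-AlexMethod} supplying the non-orientable Alexander method.

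I would first check continuity of $\Psi$ at the identity. The permutation topology on $\Aut{\ccomp{N}}$ is generated by the pointwise stabilizers $\mathrm{Stab}(\gamma)$ of individual vertices $\gamma \in \ccomp{N}$, so it suffices to show that for each essential simple closed curve $\gamma$ the preimage $\Psi^{-1}(\mathrm{Stab}(\gamma))$ is a neighborhood of $\id$ in $\Mod{N}$. Fixing a representative of $\gamma$ together with a regular neighborhood $U_\gamma$---annular if $\gamma$ is two-sided, Möbius if one-sided---the set $W = \{h \in \Homeo{N} : h(\gamma) \subset \mathrm{int}\, U_\gamma\}$ is open in the compact-open topology and contains $\id$, and by uniqueness of regular neighborhoods every $h \in W$ sends $\gamma$ to a curve isotopic to $\gamma$; hence the image of $W$ in $\Mod{N}$ lies in $\Psi^{-1}(\mathrm{Stab}(\gamma))$.

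Next I would argue that $\Mod{N}$ with the compact-open quotient topology is Polish, and then invoke the open mapping theorem. For this I would use the classical fact (due to Arens) that $\Homeo{N}$ with the compact-open topology is Polish for a separable metrizable manifold $N$, together with the assertion that the normal subgroup $H_{0} \subset \Homeo{N}$ of homeomorphisms isotopic to the identity is closed. To see the latter, apply Theorem \ref{Thm-AlexMethod} to obtain a locally finite collection $\{\gamma_i\}_{i \geq 0}$ and write
\[ H_{0} = \bigcap_{i \geq 0} \{h \in \Homeo{N} : h(\gamma_i) \text{ is isotopic to } \gamma_i\}; \]
each factor is clopen because the isotopy class of $h(\gamma_i)$ is locally constant under uniform convergence on the compact set $\gamma_i$, again by a regular-neighborhood argument. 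Consequently $\Mod{N} = \Homeo{N}/H_{0}$ inherits a Polish group topology. Since $\Aut{\ccomp{N}}$ is a closed subgroup of the Polish group $\mathrm{Sym}(V(\ccomp{N}))$ on the countable vertex set, Pettis's open mapping theorem for continuous bijective homomorphisms between Polish groups promotes $\Psi$ to a topological isomorphism, which in particular identifies the compact-open topology on $\Mod{N}$ with a Polish group topology.

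The main obstacle I anticipate is the closedness of $H_{0}$, which genuinely relies on Theorem \ref{Thm-AlexMethod} together with the stability of isotopy classes under small compact-open perturbations; in the non-orientable setting a little extra care is needed when $\gamma_i$ is one-sided, but the same regular-neighborhood argument carries over. Once this is in place the remaining steps are formal.
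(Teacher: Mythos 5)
Your proposal is correct, but it takes a genuinely different route from the paper. The paper's proof (Lemma~\ref{lemma:open-comp:permutation:same}) is a direct comparison of the two topologies on $\Mod{N}$: it shows each $U_A$ equals a finite intersection of compact--open sub-basic sets $\bnm{\alpha_i, N(\alpha_i)}$, and conversely uses the finite-type Alexander method (Theorem~\ref{Thm-AM-finite}) to show that each $\bnm{K,V}$ contains some $U_A$. You prove the first containment in essentially the same way (your continuity-of-$\Psi$ argument), but for the reverse containment you replace the paper's hands-on argument with a ``soft'' one: prove $\Mod{N}$ is Polish by exhibiting the kernel $H_0 \subset \Homeo{N}$ as a countable intersection of clopen sets (this is where Theorem~\ref{Thm-AlexMethod} enters, and where your locally-constant isotopy-class observation does the real work), then invoke Pettis's open mapping theorem. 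This trades the Alexander-method computation against two imported facts: that $\Homeo{N}$ with the compact--open topology is a Polish group for a noncompact surface (Arens gives the topological-group part; completeness uses the standard $d(f,g)+d(f^{-1},g^{-1})$ metric), and the open mapping theorem itself. Your approach is perfectly valid and arguably more structural, but it makes $\Mod{N}$'s Polish-ness a hypothesis rather than a corollary; the paper instead obtains Polish-ness of $\Mod{N}$ for free at the end, pulled back from $\Aut{\ccomp{N}}$. One small inaccuracy: what makes $h(\gamma) \subset \mathrm{int}\, U_\gamma$ force $[h(\gamma)] = [\gamma]$ is not ``uniqueness of regular neighborhoods'' but the classification of essential simple closed curves inside an annulus or a M\"obius band (in the M\"obius-band case you should also note that $h(\gamma)$ is one-sided since $h$ is a homeomorphism, which rules out the double of the core). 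The substance of your argument is right.
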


Then, to compute the first cohomology group of the pure mapping class group of $N$, we follow the ideas of \cite{AramayonaPatelVlamis}, and thus we need to understand more the topology and the topological generators of $\PMod{N}$. For this we need to recall some definitions.

A handle-shift is in essence taking an infinite strip connecting two ends of the surface with genus, and shifting said genus by one. The precise definition is given in Subsection \ref{section:handleshifts}. 

The \textit{compactly supported mapping class group}, denoted by $\PModc{N}$, is the subgroup of $\Mod{N}$ composed of the mapping classes that have representatives with compact support.

\begin{Theo}\label{Thm-PMod-TopGen-hi}
 Let $N$ be a connected (possibly non-orientable) surface of infinite topological type. If $N$ has at most one end accumulated by genus, then $\PMod{N} = \overline{\PModc{N}}$. If $N$ has at least two ends accumulated by genus, then there exist a constant $1 \leq r \leq \omega$ and a collection $\{h_{i}\}_{0 \leq i < r}$ of handle-shifts, such that $\PMod{N} = \overline{\langle \PModc{N}, \{h_{i}\}_{0 \leq i < r} \rangle}$ and $\overline{\langle \{h_{i}\}_{0 \leq i < r} \rangle}$ is isomorphic to $\Z^{r}$ as topological groups.
\end{Theo}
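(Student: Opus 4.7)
The plan is to follow the strategy of Aramayona--Patel--Vlamis \cite{AramayonaPatelVlamis}, adapting it to the non-orientable setting. I would fix a principal exhaustion $N_{0} \subset N_{1} \subset \cdots$ of $N$ by connected finite-type subsurfaces whose boundary components are all separating in $N$, and work throughout in the compact-open topology on $\PMod{N}$, which by Corollary \ref{Cor-Top-Group-Iso} coincides with the Polish topology pulled back from $\Aut{\ccomp{N}}$.

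In the first case, when $N$ has at most one end accumulated by genus, I would prove $\PMod{N} = \overline{\PModc{N}}$ directly. Given $f \in \PMod{N}$ and $k \geq 0$, choose a finite-type subsurface $N_{k}' \supset N_{k} \cup f(N_{k})$. Because $f$ fixes every end pointwise and there is at most one end accumulated by genus, each complementary component of $N_{k}'$ has the same topological type (genus, orientability, end set) as its $f$-image. The change-of-coordinates principle for non-orientable finite-type surfaces, which draws on \cite{Stukow2010} and \cite{Atalan-Korkmaz}, then allows one to modify $f$ outside $N_{k}'$ to produce an element $g_{k} \in \PModc{N}$ whose restriction to $N_{k}$ is isotopic to $f|_{N_{k}}$, and consequently $g_{k} \to f$.

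In the second case, let $r$ be one less than the cardinality of the set of ends of $N$ accumulated by genus, with the convention $\omega - 1 = \omega$, so that $1 \leq r \leq \omega$. Enumerate these ends as $e_{0}, e_{1}, \ldots$ and construct, for each $0 \leq i < r$, a handle-shift $h_{i}$ supported in an infinite strip joining $e_{0}$ to $e_{i+1}$, arranging the supports to be pairwise disjoint. Disjointness forces the $h_{i}$ to commute, and a straightforward convergence argument in the compact-open topology gives $\overline{\langle h_{i} : 0 \leq i < r \rangle} \cong \Z^{r}$ as topological groups. Next I would construct a continuous homomorphism $\Phi \colon \PMod{N} \to \Z^{r}$ whose $i$-th coordinate records the net number of crosscaps that $f$ transfers between $e_{0}$ and $e_{i+1}$, computed as a signed difference of non-orientable genera on the two sides of a suitable separating multicurve isolating $e_{0}$ from $e_{i+1}$. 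By construction, $\Phi(h_{i})$ is the $i$-th standard generator of $\Z^{r}$, so $\Phi$ restricts to an isomorphism on $\overline{\langle h_{i} \rangle}$ and has dense image.

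The main obstacle will be proving that $\ker \Phi = \overline{\PModc{N}}$. The inclusion $\supseteq$ follows immediately from continuity and the vanishing of $\Phi$ on compactly supported classes. For the reverse inclusion, given $f \in \ker \Phi$ and $k \geq 0$, I would need to construct $g_{k} \in \PModc{N}$ with $g_{k}|_{N_{k}}$ isotopic to $f|_{N_{k}}$; the vanishing of $\Phi(f)$ is precisely what ensures that any crosscaps pushed by $f$ toward an end $e_{i+1}$ are algebraically compensated at $e_{0}$ and can therefore be pulled back into a larger $N_{k}'$ by an isotopy. The non-orientable Alexander method (Theorem \ref{Thm-AlexMethod}) is crucial at this step, since it certifies that two homeomorphisms agree up to isotopy by comparing their action on a locally finite curve system, making the cancellation procedure rigorous. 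Once $\ker \Phi = \overline{\PModc{N}}$ is established, every $f \in \PMod{N}$ decomposes as $\bigl(\prod_{i} h_{i}^{\Phi(f)_{i}}\bigr) \cdot g$ with $g \in \ker \Phi$, yielding $\PMod{N} = \overline{\langle \PModc{N}, \{h_{i}\}_{0 \leq i < r}\rangle}$.
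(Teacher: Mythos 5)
Your overall architecture differs from the paper's: the paper first proves $\PMod{N} = \overline{\langle \PModc{N} \cup H\rangle}$ for $H$ the set of \emph{all} handle-shifts (adapting Patel--Vlamis), then reduces $H$ to a carefully constructed finite-or-countable subcollection $\{h_i\}$ by a four-case analysis (orientable, semi-orientable, pseudo-orientable, non-orientable) using a tree $TEG(N)$ whose non-orientable subtree $nTEG(N)$ is connected. Your plan instead goes through a globally-defined homomorphism $\Phi$ and characterizes its kernel, which is really the architecture of the paper's later Theorem \ref{Thm-Semi-direct-prod}. There are two concrete gaps. First, your $\{h_i\}$ are a ``star'' of handle-shifts joining a fixed end $e_0$ to every other end accumulated by genus, with $e_0$ chosen arbitrarily. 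The paper explicitly flags (Subsection \ref{subsec:definehi}) that not every basis of $H_1^{sep}(\widehat N;\Z)$ gives a usable collection: if $e_0$ is orientable while $N$ has two or more non-orientable ends, every $h_i$ you build is orientable or semi-orientable, and the paper's stated obstacle is precisely that it is unclear how to topologically generate non-orientable handle-shifts from $\PModcc N$, orientable, semi-orientable, and pseudo-orientable ones. Your construction does not engage with this, and the recursive ``good basis'' of Subsubsection \ref{sec:fam:curves:H} together with the connectedness of $nTEG(N)$ is exactly what the paper supplies to close this gap.

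Second, your $\Phi$ is defined by ``the net number of crosscaps $f$ transfers between $e_0$ and $e_{i+1}$,'' but in a non-orientable surface this count is not well-defined: by Dyck's relation a handle plus a crosscap equals three crosscaps, so the compact region cobounded by $\gamma_i$ and $f(\gamma_i)$ may carry its genus as handles, crosscaps, or a mixture, and the choice of unit (handle vs.\ crosscap) must change with the type of $h_i$; when $e_0$ and $e_{i+1}$ are both orientable there are no crosscaps at all, so your coordinate would vanish on an orientable $h_i$ rather than equal $1$. This is why the paper defines $\varphi$ algebraically on the free group $F(\PModc N \cup \{h_i\})$, proves it descends, proves continuity of each $\psi_i$ by a genus-counting argument in a fixed compact window, and only then extends to the closure; the relation $[h_1 h_2 h_3] = f^{-1}[h_5 h_4]f$ in Figure~\ref{fig:5:handle:relation} is exactly the kind of ambiguity your direct ``crosscap count'' would have to be shown to respect. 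Finally, your reverse inclusion $\ker\Phi \subset \overline{\PModc N}$ is left at the level of ``crosscaps pushed toward $e_{i+1}$ can be pulled back,'' which compresses into one sentence the substantive part of the paper's argument (the substitution Lemma \ref{lemma:hisubstitutedPmodc}, the Alexander-method bookkeeping, and the Patel--Vlamis change-of-coordinates step); as written this does not constitute a proof.
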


The general outline of the proof of this theorem is to follow the proof of Theorem 4 in \cite{PatelVlamis}, and prove that the compactly supported mapping class group and the set of all handle-shifts topologically generate the pure mapping class group. Then, we refine that result with the use of the collection $\{h_{i}\}_{0 \leq i < r}$. This collection is obtained via a basis of $H_{1}^{sep}(\widehat{N};\Z)$, and as such $r$ is the dimension of $H_{1}^{sep}(\widehat{N};\Z)$, where $\widehat{N}$ is the surface obtained from $N$ by ``forgetting'' all the planar ends of $N$, and $H_{1}^{sep}(\, \cdot \,; \Z)$ is the subgroup of $H_{1}(\, \cdot \,; \Z)$ generated by the homology classes that can be represented by separating simple closed curves.

One of the main difference between $\{h_{i}\}_{0 \leq i < r}$ and the similar collection obtained in Theorem 3 in \cite{AramayonaPatelVlamis}, is that in $N$ we cannot use for our proof any basis of $H_{1}^{sep}(\widehat{N};\Z)$. For example, for some surface we can produce a basis for $H_{1}^{sep}(\widehat{N};\Z)$ such that it is not clear how to topologically generate all the handle-shifts of $N$; see Subsection \ref{subsec:definehi} for more details. Thus, we construct what we call a ``good basis'' for $H_{1}^{sep}(\widehat{N};\Z)$, which in turn produces the collection $\{h_{i}\}_{0 \leq i <r}$ that satisfies the theorem. Also, due to the conclusion of Theorem \ref{Thm-PMod-TopGen-hi} we denote $\overline{\langle \{h_{i}\}_{0 \leq i <r} \rangle}$ by $\prodhi$ to emphasize the fact that this group is isomorphic to $\Z^{r}$ as topological groups.

A direct consequence of Theorem \ref{Thm-PMod-TopGen-hi} and the well-known fact that closed subgroups of Polish groups are Polish, is the following corollary.

\begin{Coro}\label{Cor-Polish}
 Let $N$ be a connected (possibly non-orientable) surface of infinite topological type. Then $\Mod{N}$, $\PMod{N}$ and $\PModcc{N}$ are Polish groups with their respective topologies.
\end{Coro}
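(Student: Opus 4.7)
The plan is to deduce the corollary from Corollary \ref{Cor-Top-Group-Iso} and Theorem \ref{Thm-PMod-TopGen-hi}, via the standard fact from descriptive set theory that a closed subgroup of a Polish topological group is itself Polish in the subspace topology. Corollary \ref{Cor-Top-Group-Iso} already disposes of $\Mod{N}$: it is a Polish group under the quotient of the compact-open topology. So what remains is to certify that both $\PMod{N}$ and $\PModcc{N}$ sit as \emph{closed} subgroups of $\Mod{N}$.

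For $\PMod{N}$, I would simply read off the two cases of Theorem \ref{Thm-PMod-TopGen-hi}. Whether $N$ has at most one or at least two ends accumulated by genus, the theorem writes $\PMod{N}$ as an explicit closure in $\Mod{N}$: either $\overline{\PModc{N}}$ or $\overline{\langle \PModc{N}, \{h_{i}\}_{0 \leq i < r}\rangle}$. Being the topological closure of a subset inside $\Mod{N}$, $\PMod{N}$ is by definition a closed subset of $\Mod{N}$, and since it is already a subgroup, it is a closed subgroup.

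For $\PModcc{N}$, closedness is even more immediate: by the notation introduced in the preliminaries, $\PModcc{N}$ is literally the closure of $\PModc{N}$. Whether this closure is taken inside $\Mod{N}$ directly or inside the already closed subgroup $\PMod{N}$, the result is a closed subset of the ambient Polish group $\Mod{N}$, and again a subgroup.

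No step of this argument presents a real obstacle: all the substantive work lies upstream, in Corollary \ref{Cor-Top-Group-Iso} (which upgrades $\Mod{N}$ to a Polish group) and in Theorem \ref{Thm-PMod-TopGen-hi} (which, beyond identifying topological generators, implicitly certifies that $\PMod{N}$ is closed in $\Mod{N}$). The corollary itself should reduce to a one-line invocation of the general principle that closed subgroups of Polish groups are Polish, applied once to $\PMod{N}$ and once to $\PModcc{N}$.
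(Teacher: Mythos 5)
Your proposal is correct and follows essentially the same route the paper indicates: it deduces Polishness of $\Mod{N}$ from Corollary \ref{Cor-Top-Group-Iso}, reads off closedness of $\PMod{N}$ and $\PModcc{N}$ from the closure expressions in Theorem \ref{Thm-PMod-TopGen-hi} and the notation $\PModcc{N}=\overline{\PModc{N}}$, and then invokes the standard fact that closed subgroups of Polish groups are Polish.
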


Now, using the previous results we can define a homomorphism from $\PMod{N}$ to $\prodhi$, which we use to give a semi-direct product structure to $\PMod{N}$.

\begin{Theo}\label{Thm-Semi-direct-prod}
 Let $N$ be a connected (possibly non-orientable) surface of infinite topological type with at least two ends accumulated by genus. Then, we have that: $$\PMod{N} = \PModcc{N} \rtimes \prodhi.$$
\end{Theo}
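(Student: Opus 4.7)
The plan is to establish the semi-direct product structure by verifying three things: $\PModcc{N}$ is a normal subgroup of $\PMod{N}$; there is a continuous retraction $\Phi\colon\PMod{N}\twoheadrightarrow\prodhi$ whose kernel is exactly $\PModcc{N}$; and $\PModcc{N}\cap\prodhi=\{\id\}$. Normality is the easiest part: conjugating a compactly supported homeomorphism by any homeomorphism preserves compact support, so $\PModc{N}\trianglelefteq\PMod{N}$, and since conjugation is continuous in the Polish topology provided by Corollary \ref{Cor-Polish}, the closure $\PModcc{N}$ remains normal.

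The central step is the construction of $\Phi$. The natural candidate is a ``flux'' homomorphism which assigns to each $f\in\PMod{N}$ a tuple $(n_0,n_1,\ldots)\in\Z^r$ measuring the net number of handles that $f$ transports along each of the directions indexed by $\{h_i\}_{0\leq i<r}$. Concretely, for each $i$ I would fix a separating simple closed curve $c_i$ realizing the $i$-th vector of the good basis of $H_1^{sep}(\widehat{N};\Z)$ used in Subsection \ref{subsec:definehi} to define $h_i$, and set $\Phi_i(f)$ to be the integer that records how $f$ pushes a chosen homology class across $c_i$ (equivalently, the coefficient along $[c_i]$ of a primitive for $f_*([\alpha])-[\alpha]$ on an auxiliary ray $\alpha$ joining the two ends accumulated by genus singled out by $h_i$). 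One then has to verify: (a) $\Phi_i$ is well-defined on isotopy classes; (b) $\Phi_i$ is a group homomorphism; (c) $\Phi_i$ is continuous with respect to the Polish topology of $\PMod{N}$; (d) $\Phi_i(h_j)=\delta_{ij}$, so that $\Phi$ restricts to the identity on $\prodhi$; and (e) $\Phi_i$ vanishes on $\PModc{N}$, hence by continuity on $\PModcc{N}$.

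Once $\Phi$ is in place the algebraic conclusion is routine. Property (d) shows both that $\Phi$ is surjective onto $\prodhi$ and that $\prodhi\cap\ker\Phi=\{\id\}$, so $\prodhi$ is a closed complement of $\ker\Phi$. For the identification $\ker\Phi=\PModcc{N}$, one inclusion is (e); conversely, given $f\in\ker\Phi$, Theorem \ref{Thm-PMod-TopGen-hi} provides a sequence $g_n\cdot k_n\to f$ with $g_n\in\PModc{N}$ and $k_n\in\langle\{h_i\}_{0\leq i<r}\rangle$, and applying the continuous homomorphism $\Phi$ forces $k_n\to\id$ in $\prodhi$, so $g_n\to f\in\PModcc{N}$. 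Combining normality of $\PModcc{N}$, the splitting $\PMod{N}=\PModcc{N}\cdot\prodhi$, and the trivial intersection yields the desired semi-direct product.

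The hard part will be item (d) together with the well-definedness and continuity of $\Phi$. An arbitrary basis of $H_1^{sep}(\widehat{N};\Z)$ would not pair diagonally with $\{h_i\}_{0\leq i<r}$; this is precisely why the ``good basis'' construction of Subsection \ref{subsec:definehi} is essential, and it is at this combinatorial/homological step that the non-orientable setting requires care, since separating curves in $N$ interact with crosscaps differently than in the orientable case of \cite{AramayonaPatelVlamis}. Continuity of $\Phi_i$ will then follow from the fact that in the permutation topology transferred through Corollary \ref{Cor-Top-Group-Iso}, the value of $\Phi_i(f)$ depends only on the image under $f$ of a finite collection of curves in a neighborhood of $c_i$.
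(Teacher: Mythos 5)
Your high-level plan coincides with the paper's: both construct a continuous retraction $\PMod{N}\to\prodhi$, identify its kernel with $\PModcc{N}$, and deduce the semi-direct product from the resulting split short exact sequence. You also correctly isolate the properties (a)--(e) that such a retraction must satisfy, and your density argument for $\ker\Phi\subseteq\PModcc{N}$ via Theorem \ref{Thm-PMod-TopGen-hi} is sound in outline (with the implicit but valid rewriting $\langle\PModc{N},\{h_i\}\rangle=\PModc{N}\cdot\langle\{h_i\}\rangle$, using that $\PModc{N}$ is normal and the $h_i$ have disjoint supports).

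The genuine gap is that the proposal defers precisely the part that constitutes the proof: you acknowledge that well-definedness, continuity, and $\Phi_i(h_j)=\delta_{ij}$ are ``the hard part'' and leave them unresolved. Your candidate flux construction is also not pinned down tightly enough to assess: a proper ray $\alpha$ does not represent a class in $H_1$, so $f_*([\alpha])-[\alpha]$ needs to be interpreted in a relative or Borel--Moore theory, and on a non-orientable surface one must confront $2$-torsion in $H_1(N;\Z)$ and the fact that the ``number of handles shifted'' is orientation-sensitive. The paper circumvents all of this by avoiding homology altogether in the definition: it defines $\widetilde{\varphi}$ on the free group $F(\PModc{N}\cup\{h_i\}_{0\leq i<r})$ by sending $\PModc{N}\mapsto 0$ and $h_i\mapsto e_i$, shows this descends to $\langle\PModc{N}\cup\{h_i\}\rangle$ using only that the $h_i$ have pairwise disjoint supports, proves continuity of each coordinate $\psi_i$ by exhibiting the open set $\bnm{\gamma_i,N(\gamma_i)}$ inside $\ker\psi_i$ via a genus-difference argument on compact subsurfaces (Lemma \ref{lemma:psiicontinuous}, which in turn relies on Lemma \ref{lemma:hisubstitutedPmodc}), and only then extends to $\overline{\psi}_i$ on $\PMod{N}$ by a sequential-continuity argument. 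The kernel identification $\ker\overline{\varphi}=\PModcc{N}$ is a separate and delicate lemma that constructs, for any limit $f$, a sequence in $\PModc{N}$ converging to $f$ by iteratively replacing the handle-shifts in approximating words with compactly supported substitutes that agree with them on larger and larger Alexander systems. In short: your outline is right, but the content you label as the hard part is exactly the content of the paper's section, and the flux shortcut you propose for producing $\Phi$ would itself require essentially the same continuity and kernel arguments once made precise, with additional homological care specific to non-orientability.
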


Finally, using Theorems \ref{Thm-PMod-TopGen-hi} and \ref{Thm-Semi-direct-prod}, along with the results from Stukow in \cite{Stukow2010}, Dudley in \cite{Dudley1961}, Specker in \cite{Specker} and Blass and Göbel in \cite{BlassGoebel} we obtain the following corollary.

\begin{Coro}\label{Thm-First-cohom-grp}
 Let $N$ be a connected (possibly non-orientable) surface of infinite topological type with genus at least $3$. If $N$ has at most one end accumulated by genus, then $H^{1}(\PMod{N};\Z)$ is trivial. If $N$ has at least two ends accumulated by genus, then $\displaystyle H^{1}(\PMod{N}; \Z) = H^{1}(\Z^{r}; \Z) = \bigoplus_{0 \leq i < r} \Z$.
\end{Coro}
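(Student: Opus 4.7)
The plan is to follow the orientable strategy of \cite{AramayonaPatelVlamis}, substituting the non-orientable inputs established earlier in the paper. Since $\Z$ is abelian, $H^{1}(\PMod{N};\Z) = \mathrm{Hom}(\PMod{N},\Z)$, so it suffices to compute all group homomorphisms $\PMod{N} \to \Z$. Corollary \ref{Cor-Polish} ensures that $\PMod{N}$ is a Polish group, and hence Dudley's automatic continuity theorem \cite{Dudley1961} guarantees that every such homomorphism is continuous. Consequently, only continuous homomorphisms need be considered.

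I would next show that every continuous homomorphism $\varphi\colon \PMod{N}\to\Z$ vanishes on $\PModcc{N}$. As $\Z$ is Hausdorff, it is enough to verify vanishing on the dense subgroup $\PModc{N}$, which, via extension by the identity, is expressible as a directed union $\bigcup_{n}\Mod{S_{n},\partial S_{n}}$ over a compact exhaustion $\{S_{n}\}$ of $N$ with $\partial S_{n}$ fixed pointwise. Since $N$ has genus at least $3$, every sufficiently large $S_{n}$ is a non-orientable surface of genus $\geq 3$ with finitely many boundary components. By Stukow's computations \cite{Stukow2010}, together with the observation that a boundary twist of $S_{n}$ becomes an interior Dehn twist inside some $S_{m}$ with $m>n$ and is then killed in the abelianization by the standard lantern-type relations available in non-orientable genus $\geq 3$, each element of $\PModc{N}$ maps to a torsion element in the abelianization of some $\Mod{S_{m},\partial S_{m}}$; hence $\varphi$ annihilates it. Continuity then yields $\varphi|_{\PModcc{N}}=0$.

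To finish, I split according to the two cases of Theorem \ref{Thm-PMod-TopGen-hi}. If $N$ has at most one end accumulated by genus, then $\PMod{N}=\PModcc{N}$ and the above forces $\varphi=0$, giving $H^{1}(\PMod{N};\Z)=0$. Otherwise, Theorem \ref{Thm-Semi-direct-prod} provides $\PMod{N}=\PModcc{N}\rtimes\prodhi$, so $\varphi$ factors through the quotient $\prodhi\cong\Z^{r}$, yielding $\mathrm{Hom}(\PMod{N},\Z)=\mathrm{Hom}(\Z^{r},\Z)$. When $r<\omega$ this is immediately $\bigoplus_{i<r}\Z$. When $r=\omega$, the topological-group isomorphism supplied by Theorem \ref{Thm-PMod-TopGen-hi} identifies $\prodhi$ with the full infinite product $\Z^{\omega}$, and Specker's theorem \cite{Specker}, refined by Blass and Göbel \cite{BlassGoebel}, gives $\mathrm{Hom}(\Z^{\omega},\Z)=\bigoplus_{i<\omega}\Z$. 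The step I expect to require the most care is the second: verifying that Stukow's relations for non-orientable surfaces of genus $\geq 3$ truly collapse the boundary-twist $\Z$-factors of $H_{1}(\Mod{S_{n},\partial S_{n}};\Z)$ after passing to the direct limit, since this is precisely where the orientable argument of \cite{AramayonaPatelVlamis} must be replaced by its non-orientable analogue.
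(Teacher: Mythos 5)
Your proposal follows the paper's argument essentially step for step: automatic continuity of $\varphi:\PMod{N}\to\Z$ via Dudley, vanishing on $\PModc{N}$ (hence on $\PModcc{N}$ by density) because each compactly supported class factors through a finite-type non-orientable piece whose abelianization is torsion by Stukow, the case split supplied by Theorems~\ref{Thm-PMod-TopGen-hi} and~\ref{Thm-Semi-direct-prod}, and Specker together with Blass--G\"obel to identify $\mathrm{Hom}(\Z^{r},\Z)$ with $\bigoplus_{i<r}\Z$ when $r=\omega$. The one phrase to treat with care is ``standard lantern-type relations available in non-orientable genus $\geq 3$'': the lantern relation is an orientable phenomenon, and the mechanism that renders Dehn (and boundary) twists torsion in the abelianization of non-orientable mapping class groups runs through chain and crosscap-slide relations rather than a literal lantern --- this is precisely the content of Stukow's computation, which the paper cites directly instead of re-deriving the direct-limit collapse of boundary twists as you sketch.
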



\textbf{Acknowledgements:} 
 The first author was supported during the creation of this article by the research project grants UNAM-PAPIIT IA104620 and UNAM-PAPIIT IN102018. 
 The second author received support from a CONACYT Posdoctoral Fellowship and from UNAM-PAPIIT-IN105318.
 Both authors were supported during the creation of this article by the CONACYT Ciencia de Frontera 2019 research project grant CF 217392.
 Both authors would also like to thank Ulises A. Ramos-García for his thoughtful comments on this work.
 

\section{Preliminaries}\label{sec1}

A \textit{curve} is a topological embedding of the unit circle into $N$. We often abuse notation and call ``curve'' the embedding, its image on $N$ or its isotopy class. The context makes clear which use we mean.

A curve is \textit{essential} if it is not isotopic to a boundary curve and if it does not bound a disk, a punctured disk or a Möbius band. Unless otherwise stated, all curves are assumed to be essential.

The \textit{(geometric) intersection number} of two isotopy classes of essential curves $\alpha$ and $\beta$ is defined as: $$i(\alpha,\beta):= \min \{|a \cap b| : a \in \alpha, b \in \beta\}.$$

We say two curves $\alpha$ and $\beta$ are in minimal position if $\alpha \cap \beta = i([\alpha],[\beta])$.

It is a well-known result (see \cite{FarbMargalit}) that if $N$ is doted with a hyperbolic metric, then in every isotopy class of a curve, there exists a unique geodesic representative. Also (see \cite{FarbMargalit}), any two geodesic representatives are in minimal position.

A set of curves $\mathcal{G}$ is \textit{locally finite} if for every compact subset $K$, the set $\{\alpha \in \mathcal{G}: \alpha \cap K \neq \varnothing\}$ is finite. A set of isotopy classes of curves $\Gamma$ is \textit{locally finite} if there exists a set of representatives $\mathcal{G}$ that is locally finite.

A \textit{multicurve} is a locally finite set of pairwise disjoint and pairwise non-isotopic curves. We often abuse notation and call ``multicurve'' the set of curves, their images on $N$ or its set of isotopy classes. The context makes clear which use we mean.

In this work, unless otherwise stated, by a \textit{subsurface $\Sigma$ of $N$} we mean a closed subsurface of $N$ such that every connected component of $\partial \Sigma$ is compact, and the natural inclusion $\Sigma \hookrightarrow N$ is $\pi_{1}$-injective.

A curve $\alpha$ is \textit{separating} if $N \setminus \alpha$ is disconnected. It is \textit{non-separating} otherwise.

Now we are ready for the following definition.

\begin{definition}
An increasing sequence of subsurfaces $\Sigma_{0} \subset \Sigma_{1} \subset \cdots \subset N$ is a \textit{principal exhaustion} if it satisfies the following:
\begin{enumerate}
 \item For each $j \geq 0$, $\Sigma_{j}$ is a finite-type subsurface such that each of its boundary curves are essential curves in $N$.
 \item For each $j \geq 0$, every connected component of $N \setminus \Sigma_{j}$ is an infinite type surface.
 \item For each $j \geq 0$ and taking $\Sigma_{-1} = \varnothing$, we have that each connected component of $\Sigma_{j} \setminus \Sigma_{j-1}$ satisfies one of the following conditions:
  \begin{itemize}
   \item If it is an orientable subsurface of genus $g$, $n$ punctures and $b$ boundary components, then $3g-3+n+b \geq 5$.
   \item If it is a non-orientable subsurface of genus $g$, $n$ punctures and $b$ boundary components, then $g + n + b \geq 8$.
  \end{itemize}
 \item Defining $B_{j}$ as the set of boundary curves of $\Sigma_{j}$, the set $B = \bigcup_{0 \leq j < \omega} B_{j}$ is a multicurve of $N$ composed of separating curves.
 \item Finally, $\displaystyle \bigcup_{0 \leq j < \omega} \Sigma_{j} = N$.
\end{enumerate}
\end{definition}


\subsection{Ends}

In this subsection we recall the definition of ends and the classification of infinite type surfaces, for details we refer to \cite{Kerekjarto}, \cite{IRichards} and \cite{AramayonaVlamis}. First, we make a small note on the notation of the genus of a surface. By the classification of finite type surfaces, an orientable surface $S$ of genus $g$ is homeomorphic to the connected sum of $g$ tori minus $n$ points and $b$ open disks, or equivalently it is homeomorphic to a sphere with $n$ punctures, $b$ holes and $g$ handles. We say that each \emph{handle} adds one \emph{orientable genus} or \emph{positive genus}. If $N$ is a non-orientable surface of genus $g$ then it is homeomorphic to the connected sum of $g$ real projective planes minus $n$ points and $b$ open disks or equivalently homeomorphic to a sphere with $g$ crosscaps minus $n$ points and $b$ open disks. We say that each crosscap adds one \emph{non-orinetable genus} or one \emph{negative genus}. When we say infinite genus we think that an infinite number of handles or crosscaps have been added. Below there is a precise definition.

Let $N$ be an infinite type surface, an \emph{exiting sequence} is a sequence $\{U_i\}_{0 \leq i < \omega}$ of connected  open subsets of $N$ such that
\begin{itemize}
	\item $U_i\subset U_j$ whenever $j<i$;
	\item $U_i$ is not relatively compact for any $0 \leq i < \omega$;
	\item $U_i$ has compact boundary for all $0 \leq i < \omega$;
	\item any relatively compact subset of $N$ is disjoint from all but finitely many $U_i$'s. 
\end{itemize}
Let $\{U_i\}_{0 \leq i < \omega}$ be an exiting sequence, we say that an element $U_{i}$ is planar if it has genus zero. 

We say that two exiting  sequences are equivalent if every element of the first sequence is eventually contained in some element of the second, and vice versa. An equivalent class of an exiting sequence is called an \emph{end}, we denote the set of ends by $\End(N)$. The space of ends $\End(N)$ can be equipped with a topology that  makes it a totally disconnected, separable and compact set, then it is homeomorphic to a closed subset of the Cantor set $\calC$ (see Proposition 3 in \cite{IRichards}). We say that
\begin{itemize}
	\item An end $\{U_i\}_{0 \leq i < \omega}$ is \emph{orientable accumulated by genus} (or simply orientable) if  $U_i$ is orientable and has infinite orientable genus for all $i$.
	\item An end  $\{U_i\}_{0 \leq i < \omega}$ is \emph{non-orientable accumulated by genus} (or simply non-orientable) if  $U_i$ is non-orientable for all $i$.
	\item An end  $\{U_i\}_{0 \leq i < \omega}$ is \emph{planar} if $U_i$ is planar for all but finitely many $i$. 
\end{itemize} 

We denote by $\Endg(N)$ the subspace of orientable and non-orientable ends accumulated by genus, and by $\Endn(N)$ the subspace of non-orientable ends. The sets $\Endg(N)$ and $\Endn(N)$ are closed subsets of $\End(N)$.

We say that a surface $N$ is of \emph{infinite genus} (respectively \emph{infinitely non-orientable}) if there is no bounded subset $K\subset N$ such that $N-K$ is of genus zero (respectively orientable). Note that an infinitely non-orientable surface is also of infinite genus. It can happen that $N$ is of infinite genus and non-orientable but not infinitely non-orientable, in this case we say that $N$ is \emph{odd} or \emph{even} non-orientable according to whether every sufficiently large compact subsurface  is non-orientable of genus odd or even respectively (equivalently has an odd or an even number of crosscaps). With these definitions we have four \emph{orientability classes}: orientable, infinitely non-orientable, odd non-orientable and even non-orientable. 

Richards (see \cite{IRichards}) showed that the homeomorphism type of a surface is determinated by its genus, number of boundaries,  orientability class and the triple of spaces 
$$ \left(\End(N),\Endg(N),\Endn(N)\right).
$$


\section{The Alexander method}\label{sec2}

In this section we prove the analogue of Theorem 1.1 in \cite{HMV1}, for non-orientable surfaces. To do this, we first recall what the Alexander method is for finite-type surfaces. Then we follow the general idea of the proof for the case of orientable infinite-type surfaces, highlighting the differences.


\subsection{Finite type}\label{subsec2-1}

\begin{theorem}[The Alexander method for finite-type surfaces]\label{Thm-AM-finite}
 Let $\Sigma$ be a finite-type surface of genus $g$, $n$ punctures and $b$ boundary components. If $\Sigma$ is orientable, assume that $3g-3+n+b \geq 4$. If $\Sigma$ is non-orientable assume that $g + n + b \geq 5$. Then there exists a finite set of curves and arcs $\Gamma$, such that if $h \in \Homeo{\Sigma;\partial\Sigma}$ fixes the isotopy class of every element in $\Gamma$, then $h$ is isotopic to the identity.
\end{theorem}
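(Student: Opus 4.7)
The plan is to adapt the classical finite-type Alexander method (see Proposition 2.8 in \cite{FarbMargalit} for the orientable case, and \cite{Atalan-Korkmaz} for the non-orientable analogue) to the hypotheses above. Since the orientable case is well-known, I would focus on the non-orientable case and treat the orientable one by direct citation.

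First, I would choose a finite collection $\Gamma$ of essential simple closed curves together with a finite collection of essential properly embedded arcs in $\Sigma$ such that the union of their images \emph{fills} $\Sigma$, in the sense that each connected component of the complement is a topological disk meeting at most one puncture and at most one boundary component. The hypothesis $g+n+b \geq 5$ (respectively $3g-3+n+b \geq 4$) is exactly what is required to realize such a $\Gamma$ while also including: (i) two-sided curves cutting off neighborhoods of each crosscap, (ii) arcs joining distinct boundary components, and (iii) enough redundancy so that every puncture and every boundary component is distinguishable by intersection pattern with $\Gamma$. Near each Möbius band I would include, besides its two-sided boundary, an arc transverse to the one-sided core, so that fixing the relevant isotopy classes forces $h$ to preserve the local orientation data.

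Second, starting from $h \in \Homeo{\Sigma;\partial\Sigma}$ fixing the isotopy class of every element of $\Gamma$, I would inductively isotope $h$ so that it fixes each $\gamma_i \in \Gamma$ setwise. This uses the bigon criterion on $\Sigma$ (which holds on non-orientable surfaces with essentially the same proof as in the orientable case) to produce, at the $i$-th step, an isotopy realizing $h(\gamma_i) = \gamma_i$ supported away from $\gamma_1,\dots,\gamma_{i-1}$. Once $h$ fixes every element of $\Gamma$ setwise and $\partial\Sigma$ pointwise, cutting $\Sigma$ along $\Gamma$ yields a disjoint union of disks, each meeting at most one puncture and at most one boundary component. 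The restriction of $h$ to each such piece is a self-homeomorphism fixing the boundary, so the Alexander lemma gives an isotopy to the identity rel boundary on each piece; these assemble into a global isotopy from $h$ to $\id_\Sigma$.

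The main obstacle is the inductive step in which one upgrades ``$h(\gamma_i)$ isotopic to $\gamma_i$'' to ``$h(\gamma_i) = \gamma_i$'' while preserving $\gamma_1,\dots,\gamma_{i-1}$. On a non-orientable surface, two essential two-sided curves can cobound either an annulus or a Möbius-band-like region, and one-sided curves have punctured-Möbius-band neighborhoods, so the usual innermost-bigon reduction must be run carefully in the complement of the already-fixed curves. The lower bounds on $g+n+b$ (respectively $3g-3+n+b$) are precisely calibrated to guarantee that $\Gamma$ can be chosen with enough slack to make every such local isotopy fit together, and to prevent $h$ from exchanging complementary disks or reversing the one-sided cores of the Möbius bands encoded in $\Gamma$.
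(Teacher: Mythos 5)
The paper does not actually prove Theorem \ref{Thm-AM-finite}: it is treated as a known result, cited to Chapter 2 of \cite{FarbMargalit} for the orientable case and to the non-orientable finite-type literature (\cite{Paris}, \cite{Atalan-Korkmaz}) in the Introduction. The only thing the paper adds is the remark following the theorem, explaining how to obtain $\Gamma$ when $\partial\Sigma\neq\varnothing$ by taking an Alexander system $\Gamma'$ for $\mathrm{int}(\Sigma)$ and adjoining one essential arc $\alpha_i$ with both endpoints on $c_i$ for each boundary component. So there is no in-paper proof to compare against, and your decision to sketch the standard argument is a sensible way to fill that gap.

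Your sketch follows the classical route (choose a filling system, inductively isotope $h$ to fix each element of $\Gamma$ setwise, cut into disks, apply Alexander's lemma), which is the same strategy used in the cited references, so it is the ``right'' proof to give. Two things should be sharpened. First, the claim that $g+n+b\geq 5$ (resp.\ $3g-3+n+b\geq 4$) is ``exactly what is required to realize such a $\Gamma$'' is not accurate: filling systems of curves and arcs exist for essentially every finite-type surface with negative Euler characteristic. These thresholds instead serve to exclude the low-complexity exceptional surfaces where a homeomorphism can fix every isotopy class in any filling system without being isotopic to the identity (the hyperelliptic/central-involution phenomena in the orientable case, and the analogous small non-orientable surfaces); note also that the paper's principal exhaustion (Definition 1.1) uses the stricter bounds $3g-3+n+b\geq 5$ and $g+n+b\geq 8$ precisely so that Theorem \ref{Thm-AM-finite} applies to each block. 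Second, the inductive step ``isotopy realizing $h(\gamma_i)=\gamma_i$ supported away from $\gamma_1,\dots,\gamma_{i-1}$'' implicitly requires the combinatorial hypothesis of the Alexander method (no three elements of $\Gamma$ pairwise intersect, minimal position, pairwise non-isotopic), without which one cannot resolve intersections by innermost bigons rel the already-fixed curves; you gesture at this but should state it, and should also say explicitly why, after all elements are fixed setwise, $h$ cannot permute complementary disks or reverse a curve (this is where the extra ``transverse'' curves/arcs in $\Gamma$ are used, and where the complexity bound does its real work).
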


\begin{figure}[htb]
	\centering
	\includegraphics[width=14cm]{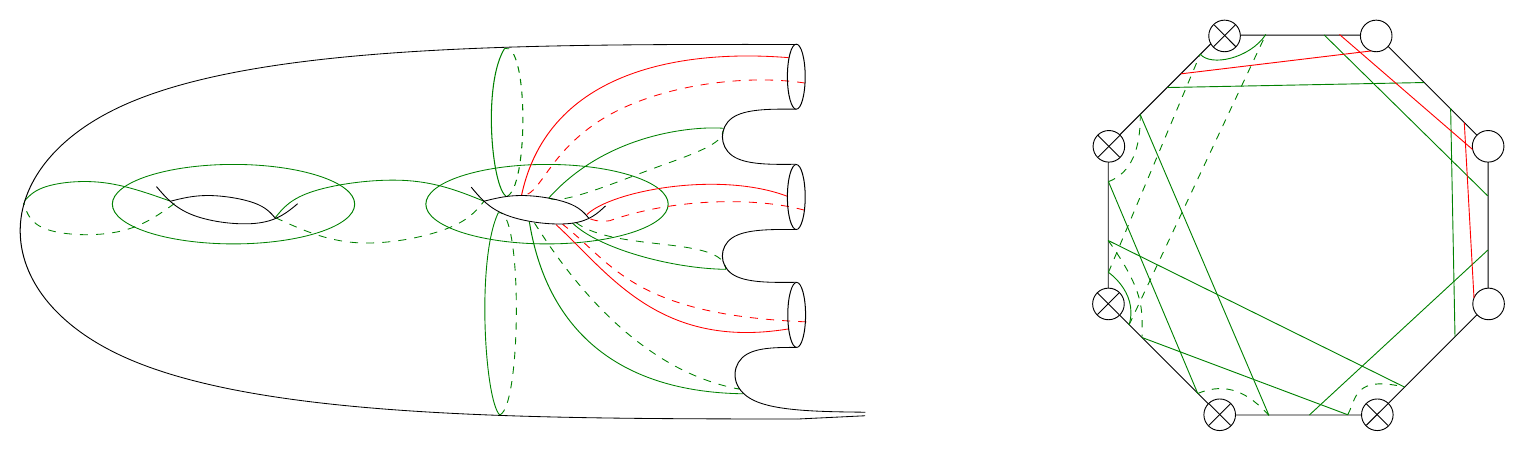}
	\caption{Two examples of a set of curves (in green) and arcs (in red) that determine homeomorphisms up to isotopy. On the left, an orientable surface (genus $2$, $3$ boundary components and $1$ puncture), while on the right a non-orientable surface (genus $5$ and $3$ boundary components).}
	\label{fig:Alexander-system}
\end{figure} 

Note that if $\Sigma$ has empty boundary, then $\Gamma$ does not contain arcs. Analogously, if $\Sigma$ has non-empty boundary, and we label the boundary components by $c_{1}, \ldots, c_{b}$, then we can obtain $\Gamma$ as follows: For each $i = 1, \ldots, b$, let $\alpha_{i}$ be an essential arc that starts and finishes at $c_{i}$, and also let $\Gamma^{\prime}$ be the set obtained from Theorem \ref{Thm-AM-finite} for $\mathrm{int}(\Sigma)$; then $\Gamma$ can be taken to be $\Gamma^{\prime} \cup \{\alpha_{i}\}_{i=1}^{b}$. See Figure \ref{fig:Alexander-system}.


\subsection{Infinite type}\label{subsec2-2}

\begin{lemma}[cf. Lemma 3.2 in \cite{HMV1}]\label{Lemma-AM-Aux1}
 Let $k \geq 0$, and let $\Gamma_{1} = \{\alpha_{0}, \ldots, \alpha_{k}\}$ and $\Gamma_{2} = \{\beta_{0},\ldots,\beta_{k}\}$ be two collections of curves on $N$ that satisfy the following:
 \begin{enumerate}
  \item For each $i = 0, \ldots, k$, $\alpha_{i}$ is isotopic to $\beta_{i}$.
  \item For each $j= 1,2$, $\Gamma_{j}$ is a collection of pairwise disjoint curves in minimal position.
 \end{enumerate}
 Then, there exists $f \in \Homeo{N}$ isotopic to the identity, such that for all $i= 0, \ldots, k$, $f(\alpha_{i}) = \beta_{i}$. Moreover, $f$ can be chosen to be the end homeomorphism of an ambient isotopy of $N$.
\end{lemma}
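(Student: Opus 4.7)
The plan is to argue by induction on $k$, mirroring the strategy of Lemma 3.2 in \cite{HMV1} with suitable modifications for the non-orientable setting.

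For the base case $k=0$: by the minimal-position hypothesis applied to the single pair, the isotopic curves $\alpha_0, \beta_0$ are either equal or disjoint. In the disjoint case, I would invoke the classical fact that two disjoint isotopic essential simple closed curves cobound an embedded annulus $A \subset N$. In the non-orientable setting this can be verified by lifting to the orientation double cover $\tilde N \to N$ and applying the orientable analogue to the lifts $\tilde\alpha_0, \tilde\beta_0$; an analysis of the deck-transformation action on the resulting annulus shows that for infinite-type $N$ the two curves are forced to be two-sided and to cobound an annulus downstairs, since the alternative scenarios yield either an embedded annulus with a one-sided boundary component (impossible) or force $N$ to be closed of finite type. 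A standard collar construction inside a neighborhood of $A$ then yields a compactly supported ambient isotopy $\{H_t\}_{t \in [0,1]}$ of $N$ with $H_0 = \id$ and $H_1(\alpha_0) = \beta_0$; set $f := H_1$.

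For the inductive step, assume the lemma holds for $k$ and consider $\Gamma_1, \Gamma_2$ of $k+1$ curves each. Apply the base case to the pair $(\alpha_0, \beta_0)$, choosing a cobounding annulus $A_0$ and an ambient isotopy supported in a small neighborhood of $A_0$ that is disjoint from the remaining curves $\alpha_1, \ldots, \alpha_k, \beta_1, \ldots, \beta_k$. The minimal-position hypothesis on $\Gamma_1 \cup \Gamma_2$, together with pairwise disjointness inside each $\Gamma_j$, ensures (after a standard bigon/half-bigon removal argument if necessary) that such a neighborhood of $A_0$ exists. Call the resulting ambient isotopy $f_0$; it satisfies $f_0(\alpha_0) = \beta_0$ while fixing the other curves pointwise. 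Apply the inductive hypothesis to the remaining $k$ pairs $\{(\alpha_i, \beta_i)\}_{1 \leq i \leq k}$, viewed inside the surface obtained by cutting along $\beta_0$ (or, equivalently, on $N$ via an ambient isotopy supported away from $\beta_0$), to obtain a compactly supported ambient isotopy $f_1$ realizing the remaining identifications. Then $f := f_1 \circ f_0$ is the end-homeomorphism of the concatenated ambient isotopy, and it satisfies $f(\alpha_i) = \beta_i$ for every $i$.

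The main technical difficulty is arranging $A_0$ and its supporting isotopy so as to avoid the other curves entirely. In the orientable case this is routine from the minimal-position hypothesis alone. In the non-orientable case, one must take additional care when nearby curves have one-sided regular neighborhoods (Möbius bands rather than annuli), so that the bigon/half-bigon removal step does not accidentally force new intersections. These situations are handled by the same half-bigon techniques that appear in the proof of the finite-type Alexander method in the non-orientable setting (cf.\ \cite{Paris}, \cite{Atalan-Korkmaz}), and once the neighborhood of $A_0$ is suitably chosen the induction proceeds without further obstruction.
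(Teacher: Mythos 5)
Your base case contains a genuine error that undermines the whole argument. You assert that ``by the minimal-position hypothesis applied to the single pair, the isotopic curves $\alpha_0, \beta_0$ are either equal or disjoint,'' and then go on to rule out the one-sided possibility by an argument with the orientation double cover. Both steps are wrong. On a non-orientable surface a one-sided simple closed curve $c$ has $i(c,c) = 1$ (its $\mathbb{Z}/2$ self-intersection number is nonzero), so two \emph{isotopic one-sided} curves in minimal position intersect exactly once, not zero times, and they certainly do not cobound an annulus. One-sided curves are ubiquitous on $N$ and nothing in the hypotheses excludes them; the double-cover argument you sketch cannot make them disappear. This is precisely where the non-orientable case diverges from Lemma 3.2 of \cite{HMV1}, and your proposal collapses the two.

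The paper's proof handles this directly: it chooses $V_0$ to be \emph{either} an annular neighborhood \emph{or} a Möbius band, depending on whether $\alpha_0$ and $\beta_0$ are two-sided or one-sided, with both curves contained in the interior of $V_0$ and isotopic to its core. An ambient isotopy supported in $V_0$, restricting to the identity on $\partial V_0$, then carries $\alpha_0$ to $\beta_0$; this works uniformly in both sidedness cases. The inductive step in the paper also stays inside $N$ throughout (it applies $f_n$ to $\Gamma_1$, observes the image is still pairwise disjoint and in minimal position, and isotopes $f_n(\alpha_{n+1})$ onto $\beta_{n+1}$ inside a fresh $V_{n+1}$), rather than cutting along $\beta_0$. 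Your cutting-along-$\beta_0$ variant would also need care: the remaining curves need not be disjoint from any particular cobounding annulus for $\alpha_0$ and $\beta_0$, and the ``small neighborhood of $A_0$ disjoint from all remaining curves'' you posit need not exist without further bigon surgery. But the decisive gap is the treatment of one-sided curves in the base case; once that is fixed in the paper's style, the rest of the induction goes through.
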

\begin{proof}
 If $|\Gamma_{1}| = |\Gamma_{2}| = 1$, let $V_{0}$ be either an annular neighborhood or a Möbius band (depending on whether $\alpha_{0}$ and $\beta_{0}$ are two-sided curves or one-sided) such that $\alpha_{0}$ and $\beta_{0}$ are both contained in the interior of $V_{0}$ and they are isotopic to the central curve of $V_{0}$. Then, there exists an ambient isotopy $\widetilde{H}_{0}: V_{0} \times I \to V_{0}$ that deforms $\alpha_{0}$ into $\beta_{0}$, and that restricts to the identity on the boundary of $V_{0}$. We can then extend $\widetilde{H}_{0}$ to an ambient isotopy $H_{0}: N \times I \to N$ using the identity on $N \backslash V_{0}$; the homeomorphism $f := H_{0}(\cdot, 1)$ is the desired homeomorphism.
 
 We now proceed by induction. Suppose that there exists an ambient isotopy $H_{n}: N \times I \to N$ such that $f_{n} := H_{n}(\cdot,1)$ maps each $\alpha_{i}$ to $\beta_{i}$ for $i \leq n$. Note that the collection $f_{n}(\Gamma_{1}) = \{\beta_{0}, \ldots, \beta_{n}, f_{n}(\alpha_{n+1}), \cdots, f_{n}(\alpha_{k})\}$ is again a collection of pairwise disjoint curves in minimal position. Let $V_{n+1}$ be either an anullar neighborhood or a Möbius band such that $f_{n}(\alpha_{n+1})$ and $\beta_{n+1}$ are both contained in the interior of $V_{n+1}$ and they are isotopic to the central curve of $V_{n+1}$. We then obtain an ambient isotopy $\widetilde{H}_{n+1}: V_{n+1} \times I \to V_{n+1}$ as above. Extending $\widetilde{H}_{n+1}$ with the identity on $N \backslash V_{n+1}$ and doing an isotopy composition with $H_{n}$, we obtain an ambient isotopy $H_{n+1}: N \times I \to N$ that deforms $\alpha_{i}$ into $\beta_{i}$ for all $i \leq n+1$. We finish the proof by defining $f_{n+1}$ as $H_{n+1}(\cdot,1)$.
\end{proof}

Given a principal exhaustion of $N$, $\{N_{i}\}_{0 \leq i < \omega}$, the first step for the construction of $\Gamma$ is the following: For each $0 \leq i$, we define $B_{i}$ as the set of boundary curves of $N_{i}$. Also, we define $B = \bigcup_{0 \leq i < \omega} B_{i}$; we call $B$ the \textit{set boundaries of the principal exhaustion}.

\begin{lemma}[cf. Lemma 3.5 in \cite{HMV1}]\label{Lemma-AM-Aux2}
 Let $N_{0} \subset N_{1} \subset \cdots \subset N$ be a principal exhaustion of $N$, $B$ the set of boundaries of this principal exhaustion, and $h \in \Homeo{N}$ be such that for all $\beta \in B$, $h(\beta)$ is isotopic to $\beta$. Then there exists $g \in \Homeo{N}$ isotopic to $h$ such that $g|_{B} = \id|_{B}$.
\end{lemma}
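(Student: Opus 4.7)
The plan is to build $g$ as the end homeomorphism of an infinite concatenation of compactly supported ambient isotopies, constructed inductively along the principal exhaustion so that at stage $n$ we fix the finite collection $B_n$ without disturbing the previously fixed $B_0, \ldots, B_{n-1}$.

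Setting $g_0 = h$, assume inductively that $g_n$ is isotopic to $h$ and satisfies $g_n|_{B_0 \cup \cdots \cup B_n} = \id$. Because $g_n$ is the identity on $B_n$, it preserves $N \setminus N_n$ setwise, so both $g_n(B_{n+1})$ and $B_{n+1}$ lie in $N \setminus N_n$; moreover each $g_n(\beta)$ is isotopic in $N$ to the corresponding $\beta \in B_{n+1}$. Inside a compact subsurface $K_{n+1}$ of $N \setminus N_n$ large enough to contain both collections, a standard bigon-removal isotopy supported in $K_{n+1}$ puts $g_n(B_{n+1})$ into minimal position with $B_{n+1}$; since corresponding curves are isotopic, they then become pairwise disjoint. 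Applying Lemma \ref{Lemma-AM-Aux1} inside $K_{n+1}$ yields an ambient isotopy supported in $K_{n+1}$ whose end homeomorphism $f_{n+1}$ satisfies $f_{n+1}(g_n(\beta)) = \beta$ setwise, for every $\beta \in B_{n+1}$.

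To upgrade setwise fixation on $B_{n+1}$ to the pointwise condition required by the conclusion, I would perform a further isotopy supported in pairwise disjoint collar neighborhoods of each $\beta \in B_{n+1}$ (annular for two-sided curves, Möbius for one-sided), chosen using the local finiteness of $B$ to be disjoint from $B_0 \cup \cdots \cup B_n$ and from each other. This deforms $(f_{n+1} \circ g_n)|_\beta$ to $\id_\beta$ while leaving all previously fixed curves untouched. Setting $g_{n+1}$ to be the result, the ambient isotopy from $g_n$ to $g_{n+1}$ has support in a compact subsurface of $N \setminus N_n$.

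Concatenating these isotopies on successively shorter time intervals, the local finiteness of their supports (every compact $K \subset N$ lies in $N_n$ for some $n$ and hence meets only finitely many of the supports) makes the infinite composition converge in the compact-open topology to a homeomorphism $g$, assembling into a well-defined ambient isotopy from $h$ to $g$ with $g|_B = \id$. The main obstacle I anticipate is justifying that $g_n$ actually preserves $N \setminus N_n$, so that the next stage can be carried out with support entirely in that complement: this reduces to the standard side-preservation fact that an ambient isotopy of $N$ which restricts to the identity on a two-sided separating curve (or, for one-sided curves, on the boundary of a regular Möbius neighborhood) cannot swap the two local sides. The non-orientable nature of $N$ introduces no new difficulty beyond the Möbius-band case of Lemma \ref{Lemma-AM-Aux1}, which is already handled there.
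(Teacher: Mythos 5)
Your proposal is correct and follows essentially the same strategy as the paper's proof: induct along the principal exhaustion, apply Lemma~\ref{Lemma-AM-Aux1} at each stage to fix $B_{n+1}$ by an isotopy supported away from $N_n$, and concatenate the resulting isotopies on shrinking time intervals, with local finiteness of the supports guaranteeing convergence to a well-defined homeomorphism. You are slightly more careful than the paper at two points the paper leaves implicit — first isotoping $g_n(B_{n+1})$ into minimal (hence disjoint) position with $B_{n+1}$ so that Lemma~\ref{Lemma-AM-Aux1} actually applies, and upgrading the setwise equality $g(\beta)=\beta$ to pointwise — but these are details, not a different route.
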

\begin{proof}
 For this proof, we use Lemma \ref{Lemma-AM-Aux1} to define a sequence of homeomorphisms $g_{i}$ that satisfy the conclusion of the lemma for the set $B_{i}$ instead of $B$. Then we define the desired $g$ from this sequence.
 
 For $B_{0}$, by Lemma \ref{Lemma-AM-Aux1} there exists a homeomorphism $f_{0}: S \to S$ such that $f$ is isotopic to $\id$ and $f_{0}|_{B_{0}} = h|_{B_{0}}$. Then, we define $g_{0} \ColonEqq f_{0}^{-1} \circ h$; this implies that $g_{0}$ is isotopic to $h$ and $g_{0}|_{B_{0}} = \id|_{B_{0}}$. Note that $g_{0}(N_{0}) = N_{0}$ since $g_{0}$ and $\id$ coincide in the boundary of $N_{0}$.
 
 For $B_{1}$, we define $\widetilde{g_{0}} \ColonEqq g_{0}|_{N \backslash \mathrm{int}(N_{0})}$. Using Lemma \ref{Lemma-AM-Aux1} again, there exists $\widetilde{f_{1}}: N \backslash \mathrm{int}(N_{0}) \to N \backslash \mathrm{int}(N_{0})$ such that $\widetilde{f_{1}}$ is isotopic to $\id|_{N \backslash \mathrm{int}(N_{0})}$ and $\widetilde{f_{1}}|_{B_{1}} = g_{0}|_{B_{1}}$. We can then extend $\widetilde{f_{1}}$ to the whole surface using the identity on $N_{0}$, i.e. we define $f_{1}$ as follows: $$f_{1}(s) = \left\{ \begin{tabular}{cl} $s$ & $s \in N_{0}$\\ $\widetilde{f_{1}}(s)$ & otherwise \end{tabular}\right. ,$$ which in particular implies that $f_{1}$ is isotopic to $\id$ relative to $N_{0}$.
 
 Afterwards, we define $g_{1} \ColonEqq f_{1}^{-1} \circ g_{0}$. Thus, $g_{0}$ is isotopic to $g_{1}$ relative to $N_{0}$ and $g_{1}|_{B_{0} \cup B_{1}} = \id|_{B_{0} \cup B_{1}}$.
 
 Inductively, following the same procedure for the definition of $g_{1}$, we define for any $i \geq 2$ a homeomorphism $g_{i}: N \to N$ such that $g_{i}|_{\cup_{0 \leq k \leq i} B_{k}} = \id|_{\cup_{0 \leq k \leq i} B_{k}}$, and for $i < j$ we have that $g_{i}$ is isotopic to $g_{j}$ relative to $N_{i}$.
 
 Thus, the map $g: N \to N$ with $s \mapsto g_{i}(s)$ for $s \in N_{i}$ is a well-defined homeomorphism. Also, by construction, $g|_{B} = \id|_{B}$. Moreover, if $H_{i}:N \times [0,1] \to N$ is the isotopy from $g_{i}$ to $g_{i+1}$ relative to $N_{i}$, then let $\widetilde{H_{i}}$ be the rescaling of $H_{i}$ to the interval $[\frac{i}{i+1},\frac{i+1}{i+2}]$; thus, defining $H: N \times [0,1] \to N$ as the concatenation of the $\widetilde{H_{i}}$, and as $g = H|_{N \times \{1\}}$, we obtain an isotopy from $g_{1}$ to $g$. Therefore, by transitivity $g$ is isotopic to $h$, finishing the proof.
\end{proof}

\begin{proof}[\textbf{Proof of Theorem \ref{Thm-AlexMethod}}]
 Let $\{N_{i}\}_{0 \leq i < \omega}$ be a principal exhaustion of $N$, let $B$ be its boundaries. Let $\{\Sigma_{j}\}_{0 \leq j < \omega}$ be the collection of subsurfaces $\Sigma_{j}$ of $N$, corresponding to the connected components of $N \backslash B$. Note that for all $0 \leq j$, $\Sigma_{j}$ has complexity at least $5$ if it is orientable, and if $g$ is its genus and it has $n$ punctures, then $g + n \geq 8$ if it is non-orientable. Also, if we denote by $\overline{\Sigma_{j}}$ the closure of $\Sigma_{j}$ in $N$, then all the boundary curves of $\overline{\Sigma_{j}}$ are elements of $B$.
 
 Now, for each $\beta \in B$, let $\beta^{*}$ be a curve on $N$ such that:
 \begin{itemize}
  \item Their isotopy classes intersect at least twice, i.e. $i([\beta],[\beta^{*}]) \geq 2$.
  \item For all $\gamma \in B \backslash \{\beta\}$, $\beta^{*}$ is disjoint from $\gamma$.
 \end{itemize}
 
 Note that the choice of $\beta^{*}$ is arbitrary, and while for every $\beta \in B$, there  exist infinitely many possible choices for $\beta^{*}$, once the choice is made, we fix $\beta^{*}$ for the rest of the proof. We define $B^{*} = \{\beta^{*} : \beta \in B\}$. 
 
 Then, let $\Gamma_{j}$ be a finite set of curves in $\Sigma_{j}$ such that it satisfies the Alexander method for $\Sigma_{j}$ (see Theorem \ref{Thm-AM-finite}).
 
 We claim that the set: $$\Gamma = B \cup B^{*} \cup \left(\bigcup_{0 \leq j < \omega} \Gamma_{j}\right),$$ satisfies Theorem \ref{Thm-AlexMethod}.
 
 To prove this, let $h \in \Homeo{N}$ be such that $h(\gamma)$ is isotopic to $\gamma$ for all $\gamma \in \Gamma$. Due to Lemma \ref{Lemma-AM-Aux2}, we can suppose that $h|_{B} = \id|_{B}$. This implies that $h|_{\overline{\Sigma_{j}}} \in \Homeo{\overline{\Sigma_{j}};\partial\overline{\Sigma_{j}}}$ for each $0 \leq j < \omega$.
 
 By construction, for each $0 \leq j < \omega$ and every boundary curve $\beta$ of $\overline{\Sigma_{j}}$, $\beta^{*} \cap \Sigma_{j}$ contains at least one proper arc with endpoints in $\beta$. Thus, if we denote by $\Delta_{j}$ the set of boundary curves of $\Sigma_{j}$, we have that the set $\Gamma_{j} \cup \left(\bigcup_{\beta \in \Delta_{j}} \beta^{*} \cap \Sigma_{j}\right)$ satisfies the conditions of Theorem \ref{Thm-AM-finite}. Given that for each $0 \leq j < \omega$, $h|_{\overline{\Sigma_{j}}}$ preserves the isotopy class of this set of curves, we have that $h|_{\overline{\Sigma_{j}}}$ is isotopic to $\id|_{\overline{\Sigma_{j}}}$ relative to the boundary. Finally, we use these isotopies to define an isotopy from $h$ to $\id$.
\end{proof}


\section{Isomorphisms between curve graphs}\label{sec3}

In this section we prove Theorem \ref{Thm-Atalan-Korkmaz}, which says that any isomorphism between curve graphs is induced by a homeomorphism between the underlying surfaces. Our proof of this theorem is almost the same as the proof of Theorem 1.1 in \cite{HMV2}, with the proofs being essentially the same (simply substituting auxiliary lemmata and results in the orientable case with the corresponding lemmata and results in the possibly non-orientable case); thus, while we refer the reader to \cite{HMV2} for more detailed proofs, for the sake of completeness we also sketch the proofs.

Throughout this section, let $N$, $N_{1}$ and $N_{2}$ be connected, possibly non-orientable surfaces of infinite type with empty boundary.\\[0.3cm]

Recalling from Section \ref{sec1} that in hyperbolic surfaces geodesic representatives of curves are always in minimal position, we know that for $N$ there exists a set $\mathcal{S}$ of representatives of the isotopy classes of all essential curves on $N$ such that any two elements of $\mathcal{S}$ are in minimal position. With this in mind we have the following lemma.

\begin{lemma}[cf. Lemma 2.5 in \cite{HMV2}]\label{Lemma-LocFinite}
 Let $N$ be an infinite-type surface and $\mathcal{S}$ be a set of representatives of the isotopy classes of all essential curves on $N$ such that any two elements of $\mathcal{S}$ are in minimal position. Let also $\Gamma$ be a set of isotopy classes of curves and $\mathcal{G}\subset \mathcal{S}$ be a set of representatives of $\Gamma$. Then the following are equivalent:
 \begin{enumerate}
  \item $\mathcal{G}$ is locally finite.
  \item $\Gamma$ is locally finite.
  \item For every curve $\alpha$ the set $\{\gamma \in \Gamma : i(\alpha, \gamma) \neq 0\}$ is finite.
 \end{enumerate}
\end{lemma}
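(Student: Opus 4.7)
The plan is to establish the cyclic chain $(1)\Rightarrow(2)\Rightarrow(3)\Rightarrow(1)$, as the first implication is immediate from the definition and the other two are essentially independent.

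For $(1)\Rightarrow(2)$ I would simply note that $\mathcal{G}$ is by hypothesis a set of representatives of $\Gamma$, so its local finiteness is exactly what the definition of local finiteness of $\Gamma$ requires. For $(2)\Rightarrow(3)$, fix a curve $\alpha$ with some representative $a$ and take a locally finite set of representatives $\mathcal{G}'$ of $\Gamma$ provided by (2). Since $a$ is compact, only finitely many $g'\in \mathcal{G}'$ meet $a$, and for any $\gamma\in\Gamma$ whose representative in $\mathcal{G}'$ is disjoint from $a$ we automatically get $i(\alpha,\gamma)\leq |a\cap g'|=0$. Hence $\{\gamma\in\Gamma : i(\alpha,\gamma)\neq 0\}$ is contained in a finite set and is itself finite.

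The substantive step is $(3)\Rightarrow(1)$, where the idea is to replace the compact constraint $g\cap K\neq\varnothing$ by finitely many intersection constraints. Given a compact $K\subset N$, I would first engulf $K$ in a compact finite-type subsurface $\Sigma$ with $K\subset \mathrm{int}(\Sigma)$, every component of $\partial\Sigma$ essential in $N$, $\Sigma\hookrightarrow N$ a $\pi_1$-injective inclusion, and $\Sigma$ of sufficient complexity to support a finite filling system (any large enough term of a principal exhaustion does this). Let $\beta_1,\dots,\beta_l \in \mathcal{S}$ represent the boundary components of $\Sigma$, and let $\alpha_1,\dots,\alpha_m\in\mathcal{S}$ be the $\mathcal{S}$-representatives of a filling system of essential non-peripheral simple closed curves of $\Sigma$, chosen so that any essential non-peripheral simple closed curve of $\Sigma$ has positive geometric intersection with some $\alpha_i$. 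Now for $g\in \mathcal{G}$ with $g\cap K\neq\varnothing$: if $g\not\subset\Sigma$, then $g$ crosses some $\beta_j$, and since $g,\beta_j\in\mathcal{S}$ are in minimal position this gives $i([g],[\beta_j])\neq 0$. If $g\subset\Sigma$, then $\pi_1$-injectivity of $\Sigma\hookrightarrow N$ ensures $g$ does not bound a disk or Möbius band in $\Sigma$ (such a disk or Möbius band would persist to $N$ and contradict essentialness of $g$ in $N$); if additionally $g$ is non-peripheral in $\Sigma$, the filling property yields some $i$ with $i([g],[\alpha_i])\neq 0$, while if $g$ is peripheral in $\Sigma$ then $[g]=[\beta_j]$ for some $j$. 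In all cases $[g]$ lies in the finite union $\bigcup_i\{\gamma\in\Gamma : i(\gamma,\alpha_i)\neq 0\}\cup\bigcup_j\{\gamma\in\Gamma : i(\gamma,\beta_j)\neq 0\}\cup\{[\beta_1],\dots,[\beta_l]\}$, which is finite by hypothesis (3); since $\mathcal{G}$ contains one representative per isotopy class, only finitely many $g\in\mathcal{G}$ meet $K$.

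The main obstacle to anticipate is the peripheral subcase in $(3)\Rightarrow(1)$: a curve $g\subset\Sigma$ that is isotopic in $\Sigma$ to some $\beta_j$ need not have positive geometric intersection with any $\alpha_i$ yet may still meet $K$ from inside the annular neighborhood of $\beta_j$. This is harmless only because $\Sigma$ has finitely many boundary components, so the peripheral isotopy classes form a finite set and the extra contribution they make is harmless; tracking this finite overflow is the one place where the argument is not a direct consequence of (3).
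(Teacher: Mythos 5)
Your proof is correct, and the overall strategy matches the paper's: $(1)\Rightarrow(2)$ by definition, $(2)\Rightarrow(3)$ by compactness of a chosen representative, and for $(3)\Rightarrow(1)$ you engulf $K$ in a finite-type incompressible subsurface $\Sigma$ whose boundary curves lie in $\mathcal{S}$ and reduce to finitely many intersection tests. The difference is in how $(3)\Rightarrow(1)$ is executed: the paper argues by contrapositive, takes a pants decomposition $P\subset\mathcal{S}$ of $\Sigma$, and applies the pigeonhole principle (splitting into the two cases that infinitely many $\gamma_i$ are contained in $\Sigma$ or not, and in each producing one fixed curve meeting infinitely many of the $\gamma_i$); you argue directly, replacing the pants decomposition by a filling system $\{\alpha_i\}$ and sorting each $g\in\mathcal{G}$ meeting $K$ into one of finitely many sets $\{\gamma : i(\gamma,\alpha_i)\neq 0\}$, $\{\gamma : i(\gamma,\beta_j)\neq 0\}$, or $\{[\beta_j]\}$. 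Both reach the same conclusion; the paper's version is marginally lighter on machinery (pants decomposition rather than filling system) and avoids one small point that your version implicitly uses: for $g\subset\Sigma$ essential and non-peripheral, the filling property yields positive geometric intersection with some $\alpha_i$ computed \emph{in $\Sigma$}, and one needs $\pi_1$-injectivity of $\Sigma\hookrightarrow N$ to know this agrees with $i(\cdot,\cdot)$ in $N$ (i.e. that minimal position inside $\Sigma$ persists in $N$); the paper's argument works only with intersections of $\mathcal{S}$-representatives as subsets of $N$ and so sidesteps this. Your handling of the peripheral overflow, and the observation that it is finite because $\partial\Sigma$ has finitely many components, is exactly the right bookkeeping.
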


Since this lemma is actually more general than the analogous in \cite{HMV2}, we give a more detailed proof.

\begin{proof}
 $(1) \Rightarrow (2)$: This is obvious by the definition of a set of isotopy classes being locally finite.
 
 $(2) \Rightarrow (3)$: Let $\mathcal{X}$ be a set of representatives of $\Gamma$ that is locally finite, $\alpha$ be a curve of $N$ and $a$ be a representative of $\alpha$. Then, we have the following: $$\{\gamma \in \Gamma : i(\alpha, \gamma) \neq 0\} \subset \{[c] \in \Gamma: c \in \mathcal{X}, a \cap c \neq \varnothing\}.$$ Since the latter set is finite, then we obtain (3).
 
 $(3) \Rightarrow (1)$: We prove this by contrapositive. Suppose $\mathcal{G}$ is not locally finite. Then there exists a compact set $K$ and an infinite collection $\{\gamma_{i}\}_{0 \leq i < \omega} \subset \mathcal{G}$ such that for all $i$ we have that $K \cap \gamma_{i} \neq \varnothing$. There exists a finite-type subsurface $\Sigma$ that satisfies the following:
 \begin{enumerate}
  \item $\Sigma$ contains $K$ in its interior.
  \item If $\{c_{1}, \ldots, c_{b}\}$ are all the boundary curves of $\Sigma$, then $\{c_{1}, \ldots, c_{b}\} \subset \mathcal{S}$.
 \end{enumerate}
 
 Then we have that for all $i$, $\Sigma \cap \gamma_{i} \neq \varnothing$, and we can divide the proof into two cases:
 
 \textbf{Case 1,} there exists an infinite subcollection $\{\gamma_{i_{n}}\}_{0 \leq n < \omega}$ contained in $\Sigma$: Let $P \subset \mathcal{S}$ be a pants decomposition of $\Sigma$. Since $P$ is a maximal set of pairwise disjoint and pairwise non-isotopic curves of $\Sigma$, by the pigeonhole principle, there is a curve $\alpha \in P$ that intersects infinitely many elements of $\{ \gamma_{i_{n}}\}_{0 \leq n < \omega}$. Given that all the elements in $\mathcal{S}$ are in minimal position, we have that the set $\{\gamma \in \Gamma : i(\alpha, \gamma) \neq 0\}$ is infinite.
 
 \textbf{Case 2,} only finitely many elements of $\{\gamma_{i}\}_{0 \leq i < \omega}$ are contained in $\Sigma$: If all (but finitely many of) the elements of $\{\gamma_{i}\}_{0 \leq i < \omega}$ were disjoint from all the elements of $\{c_{1}, \ldots, c_{b}\}$, then they would not intersect $K$. Thus by the pigeonhole principle there exists a boundary curve $c_{i}$ of $\Sigma$ and a subsequence $\{\gamma_{i_{n}}\}_{0 \leq n < \omega}$, such that they every $\gamma_{i_{n}}$ intersects $c_{i}$. Given that the elements of $\mathcal{S}$ are in minimal position, this implies that the set $\{\gamma \in \Gamma : i([c_{i}], \gamma) \neq 0\}$ is infinite.
\end{proof}

\begin{lemma}[cf. Corollary 2.6 in \cite{HMV2}]\label{Lemma-Isom-Multicurves}
 Let $N_{1}$, $N_{2}$ be two connected (possibly non-orientable) surfaces of infinite type, $M$ be a multicurve on $N_{1}$, and $\varphi: \ccomp{N_{1}} \to \ccomp{N_{2}}$ be an isomorphism.  Then, $\varphi(M)$ is a multicurve. In particular, if $P$ is a pants decomposition of $N_{1}$, then $\varphi(P)$ is a pants decomposition.
\end{lemma}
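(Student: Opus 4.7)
The plan is to verify the three defining conditions of a multicurve for $\varphi(M)$ (pairwise non-isotopy, pairwise disjointness, and local finiteness) in turn, and then deduce the pants decomposition statement via a short maximality argument.

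I would dispose of the easy parts first. Pairwise non-isotopy is immediate since $\varphi$ is injective on vertices, so distinct isotopy classes in $M$ go to distinct isotopy classes in $\varphi(M)$. For pairwise disjointness, recall that in a curve graph, two distinct vertices are adjacent if and only if the underlying isotopy classes admit disjoint representatives (equivalently $i = 0$). As $M$ is a multicurve, any two distinct elements $\alpha, \beta \in M$ are adjacent in $\ccomp{N_{1}}$, so their images are adjacent in $\ccomp{N_{2}}$, i.e.\ $i(\varphi(\alpha), \varphi(\beta)) = 0$. Fixing a hyperbolic metric on $N_{2}$ and passing to geodesic representatives, which are simultaneously in minimal position, produces a set of simultaneously pairwise disjoint representatives of $\varphi(M)$.

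The main content is local finiteness of $\varphi(M)$, where I would invoke Lemma \ref{Lemma-LocFinite}. Fix an arbitrary $\alpha' \in \ccomp{N_{2}}$ and let $\alpha \ColonEqq \varphi^{-1}(\alpha')$. For any $\gamma \in M$, the condition $i(\alpha', \varphi(\gamma)) \neq 0$ is equivalent to $\alpha'$ and $\varphi(\gamma)$ being distinct and non-adjacent in $\ccomp{N_{2}}$, which, since $\varphi$ is a graph isomorphism, is equivalent to $\alpha$ and $\gamma$ being distinct and non-adjacent in $\ccomp{N_{1}}$, i.e.\ $i(\alpha, \gamma) \neq 0$. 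Hence $\gamma \mapsto \varphi(\gamma)$ restricts to a bijection between $\{\gamma \in M : i(\alpha, \gamma) \neq 0\}$ and $\{\gamma' \in \varphi(M) : i(\alpha', \gamma') \neq 0\}$. By the hypothesis that $M$ is a multicurve and Lemma \ref{Lemma-LocFinite}(3), the first set is finite, so the second is as well; applying Lemma \ref{Lemma-LocFinite} in the reverse direction gives local finiteness of $\varphi(M)$, and hence $\varphi(M)$ is a multicurve.

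For the ``in particular'' clause, I would use the characterization of a pants decomposition as a maximal multicurve (the complementary pieces are pairs of pants, which carry no essential simple closed curves other than their boundaries, so no essential curve on the surface can be added while remaining a multicurve). If $\varphi(P)$ failed to be maximal there would exist an essential curve $\alpha'$ on $N_{2}$, distinct from every element of $\varphi(P)$ and disjoint from each of them. Then $\varphi^{-1}(\alpha')$ would be an essential curve on $N_{1}$ distinct from every element of $P$ and, by the same graph-isomorphism argument, adjacent in $\ccomp{N_{1}}$ to each of them, contradicting maximality of $P$. The only step that requires genuine work is the local finiteness in the previous paragraph, and even there the argument reduces, via Lemma \ref{Lemma-LocFinite}, to the purely combinatorial fact that $\varphi$ preserves non-adjacency; I do not expect any serious obstacle.
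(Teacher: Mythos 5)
Your proof is correct and takes essentially the same approach as the paper's; the paper's argument is precisely that a multicurve (and a pants decomposition, via maximality) can be characterized by simplicial conditions preserved under the graph isomorphism, and your write-up just makes those conditions explicit and verifies them one by one, with Lemma \ref{Lemma-LocFinite}(3) supplying the simplicial characterization of local finiteness.
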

\begin{proof}[Sketch of the proof]
 Note that Lemma \ref{Lemma-LocFinite} implies that we can characterize local finiteness simplicially. As such, any multicurve can be characterized as a set satisfying certain simplicial conditions. Since $\varphi$ is an isomorphism these conditions are preserved, implying that its image is also a multicurve.
 
 Moreover, if $P$ is a pants decomposition, then it is a maximal multicurve. Then, by the argument above, $\varphi(P)$ is a multicurve, and maximality is obtained by the surjectivity of $\varphi$.
\end{proof}

We say a set $M$ of locally finite pairwise disjoint curves bounds a closed subsurface $\Sigma \subset N$, if the set of boundary curves of $\Sigma$ that are not boundary curves of $N$, is exactly $M$.

Recall that a \textit{pair of pants} is a closed subsurface whose interior is homeomorphic to a thrice-punctured sphere. Now, let $P$ be a pants decomposition of $N$ and $\alpha_{1}, \alpha_{2} \in P$ be different. For each $i = 1,2$, let $\beta_{i}$ be $\alpha_{i}$ if $\alpha_{i}$ is two-sided; otherwise, let $\beta_{i}$ be the boundary curve of the Möbius band that is the regular neighborhood of $\alpha_{i}$ (note that in this case, $\beta_{i}$ is not essential). We say that $\alpha_{1}$ and $\alpha_{2}$ are \textit{adjacent \wrt}$P$, if there exists a set $M \supset \{\beta_{1},\beta_{2}\}$ that bounds a pair of pants.

\begin{remark}\label{Remark-Adjacency}
 Note that $\alpha, \beta \in P$ are adjacent \wrt$P$ if and only if there exists a curve $\gamma$ such that $i(\alpha,\gamma) \neq 0 \neq i(\beta,\gamma)$ and $i(\delta,\gamma) = 0$ for all $\delta \in P \backslash \{\alpha,\beta\}$. This implies that we can simplicially characterise adjacency.
\end{remark}

\begin{lemma}\label{Lemma-Isom-Adjacency}
 Let $N_{1}$, $N_{2}$ be two connected (possibly non-orientable) surface of infinite type, $P$ be a pants decomposition on $N_{1}$, and $\varphi: \ccomp{N_{1}} \to \ccomp{N_{2}}$ be an isomorphism. Then, $\alpha,\beta \in P$ are adjacent \wrt $P$ if and only if $\varphi(\alpha)$ and $\varphi(\beta)$ are adjacent \wrt $\varphi(P)$.
\end{lemma}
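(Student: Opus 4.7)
The plan is to leverage Remark \ref{Remark-Adjacency}, which asserts that adjacency of two curves with respect to a pants decomposition $P$ can be characterized purely simplicially inside $\ccomp{N}$. Once adjacency is phrased in terms of the curve graph structure alone, preservation under a graph isomorphism is essentially automatic.

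First I would record the following two ingredients. On one hand, by Lemma \ref{Lemma-Isom-Multicurves}, $\varphi(P)$ is again a pants decomposition of $N_{2}$, so the simplicial characterization of adjacency in Remark \ref{Remark-Adjacency} is available for both $P$ in $N_{1}$ and $\varphi(P)$ in $N_{2}$. On the other hand, since $\varphi$ is an isomorphism of curve graphs and in $\ccomp{N}$ an edge between two distinct isotopy classes corresponds exactly to $i(\cdot, \cdot) = 0$, for any two distinct classes $\mu, \nu$ we have
\[
i(\mu, \nu) = 0 \iff i(\varphi(\mu), \varphi(\nu)) = 0,
\]
and consequently $i(\mu, \nu) \neq 0$ if and only if $i(\varphi(\mu), \varphi(\nu)) \neq 0$.

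Next, assume $\alpha, \beta \in P$ are adjacent with respect to $P$. By Remark \ref{Remark-Adjacency} there is a curve $\gamma$ with $i(\alpha, \gamma) \neq 0 \neq i(\beta, \gamma)$ and $i(\delta, \gamma) = 0$ for every $\delta \in P \setminus \{\alpha, \beta\}$. Applying $\varphi$ and using the equivalence above, the curve $\varphi(\gamma)$ satisfies $i(\varphi(\alpha), \varphi(\gamma)) \neq 0 \neq i(\varphi(\beta), \varphi(\gamma))$ together with $i(\varphi(\delta), \varphi(\gamma)) = 0$ for all $\varphi(\delta) \in \varphi(P) \setminus \{\varphi(\alpha), \varphi(\beta)\}$. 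Invoking the characterization in Remark \ref{Remark-Adjacency} for the pants decomposition $\varphi(P)$, we conclude that $\varphi(\alpha)$ and $\varphi(\beta)$ are adjacent with respect to $\varphi(P)$. The reverse implication is identical after replacing $\varphi$ with its inverse $\varphi^{-1}$, which by Lemma \ref{Lemma-Isom-Multicurves} also sends pants decompositions to pants decompositions.

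In this argument there is no real obstacle; the whole content of the lemma has been packed into Remark \ref{Remark-Adjacency}, and the only thing one has to check is that the simplicial condition appearing there is visibly invariant under any isomorphism of curve graphs. If a sharper proof were desired, the only delicate point would be to make sure that the witness curve $\gamma$ guaranteed by the remark is not accidentally one of the elements of $P$, which is immediate since $\gamma$ intersects $\alpha$ nontrivially while elements of $P$ are disjoint from each other; this confirms that $\varphi(\gamma)$ is distinct from every $\varphi(\delta)$ with $\delta \in P \setminus \{\alpha, \beta\}$, so the edge-preservation of $\varphi$ really does transfer the intersection pattern.
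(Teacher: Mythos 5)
Your proof is correct and follows the same route as the paper: the paper's own proof is the one-liner ``This follows immediately from Remark \ref{Remark-Adjacency} and the fact that $\varphi$ is an isomorphism,'' and you have simply unpacked that remark's simplicial characterization of adjacency, checked it is transported by $\varphi$ (and $\varphi^{-1}$, using Lemma \ref{Lemma-Isom-Multicurves} to know $\varphi(P)$ is again a pants decomposition). Nothing is missing; your version is just more explicit.
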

\begin{proof}
 This follows immediatly from Remark \ref{Remark-Adjacency} and the fact that $\varphi$ is an isomorphism.
\end{proof}

Let $P$ be a pants decomposition; we define its \textit{adjacency graph}, denoted as $\adj{P}$, as the simplicial graph whose vertex set is $P$, and two vertices span an edge if they are adjacent \wrt $P$. Note that by Lemma \ref{Lemma-Isom-Multicurves}, if $\varphi: \ccomp{N_{1}} \to \ccomp{N_{2}}$ is an isomorphism, then $\varphi(P)$ is a pants decomposition; thus, we induce a map $\varphi_{P}: \adj{P} \to \adj{\varphi(P)}$ defined as $\alpha \mapsto \varphi(\alpha)$. Then, the following corollary follows from Lemmata \ref{Lemma-Isom-Multicurves} and \ref{Lemma-Isom-Adjacency}.

\begin{corollary}[cf. Proposition 3.1 in \cite{HMV2}]\label{Cor-Isom-AdjGraph}
 Let $N_{1}$, $N_{2}$ be two connected (possibly non-orientable) surface of infinite type, $P$ be a pants decomposition on $N_{1}$, and $\varphi: \ccomp{N_{1}} \to \ccomp{N_{2}}$ be an isomorphism. Then, $\varphi_{P}$ is an isomorphism between $\adj{P}$ and $\adj{\varphi(P)}$.
\end{corollary}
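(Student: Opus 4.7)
The plan is to show that $\varphi_P$ is both a bijection on vertices and preserves the edge relation in both directions, since a graph isomorphism of simplicial graphs is exactly a bijection on vertices that induces a bijection on edges.

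First, I would establish that $\varphi_P$ is well-defined with the correct codomain. By Lemma \ref{Lemma-Isom-Multicurves}, the image $\varphi(P)$ is a pants decomposition of $N_{2}$, so $\adj{\varphi(P)}$ is a well-defined graph whose vertex set is exactly $\varphi(P)$. Consequently $\varphi_P : \alpha \mapsto \varphi(\alpha)$ is a map from the vertex set of $\adj{P}$ to the vertex set of $\adj{\varphi(P)}$.

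Next, I would verify that $\varphi_P$ is a bijection on vertex sets. Since $\varphi: \ccomp{N_{1}} \to \ccomp{N_{2}}$ is a graph isomorphism, it is in particular an injection on vertices, so its restriction to $P$ is injective; surjectivity onto $\varphi(P)$ is immediate by definition of the image. Hence $\varphi_P$ is a bijection on vertex sets.

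Finally, I would address the edge relation, which is where the adjacency characterisation does all the work. By Lemma \ref{Lemma-Isom-Adjacency}, two curves $\alpha,\beta \in P$ are adjacent \wrt $P$ if and only if $\varphi(\alpha)$ and $\varphi(\beta)$ are adjacent \wrt $\varphi(P)$. In graph-theoretic terms, $\{\alpha,\beta\}$ spans an edge in $\adj{P}$ if and only if $\{\varphi_P(\alpha), \varphi_P(\beta)\} = \{\varphi(\alpha),\varphi(\beta)\}$ spans an edge in $\adj{\varphi(P)}$. Combined with the vertex bijection, this shows that $\varphi_P$ induces a bijection on the edge sets, and therefore $\varphi_P$ is an isomorphism of simplicial graphs.

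There is no substantive obstacle here, since the corollary is essentially the packaging of Lemmata \ref{Lemma-Isom-Multicurves} and \ref{Lemma-Isom-Adjacency} into a single statement about graph isomorphisms; the genuine content, namely that adjacency is a simplicial invariant that can be read off from $\ccomp{N_{i}}$ (Remark \ref{Remark-Adjacency}) and that multicurves are preserved, has already been established.
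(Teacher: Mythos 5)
Your proof is correct and follows exactly the route the paper takes: the paper simply states that the corollary follows from Lemmata \ref{Lemma-Isom-Multicurves} and \ref{Lemma-Isom-Adjacency}, and your argument is a careful unpacking of that one-line justification into well-definedness, vertex bijectivity, and the biconditional preservation of adjacency.
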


A separating curve is called \textit{outer} if it bounds a twice-punctured disk, while it is called non-outer otherwise.

\begin{remark}\label{Remark-Non-Outer}
 Note that if $P$ is a pants decomposition, then non-outer separating curves are exactly the cut vertices of $\adj{P}$. Moreover, if $M \subset P$ is a set of non-outer separating curves, then $M$ bounds a finite-type closed subsurface of $N$ if and only if there is a finite subgraph of $\adj{P}$ delimited exactly by the vertices corresponding to $M$ in $\adj{P}$.
\end{remark}

\begin{lemma}[cf. Lemma 3.2 in \cite{HMV2}]\label{Lemma-Isom-Separating}
 Let $N_{1}$, $N_{2}$ be two connected (possibly non-orientable) surface of infinite type, and $\varphi: \ccomp{N_{1}} \to \ccomp{N_{2}}$ be an isomorphism. Then, $\alpha$ is a non-outer separating curve if and only if $\varphi(\alpha)$ is a non-outer separating curve.
\end{lemma}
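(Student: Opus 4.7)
The plan is to reduce the statement to a purely simplicial observation about adjacency graphs, exploiting Remark \ref{Remark-Non-Outer} and Corollary \ref{Cor-Isom-AdjGraph}. The key point is that ``non-outer separating'' can be detected inside the adjacency graph of any pants decomposition containing the curve, and cut vertices are preserved under graph isomorphisms.

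First, extend $\alpha$ to a pants decomposition $P$ of $N_{1}$; such an extension exists by a standard Zorn's lemma argument applied to pairwise disjoint, pairwise non-isotopic, locally finite collections of essential curves containing $\alpha$ (local finiteness is automatic once we choose geodesic representatives, by Lemma \ref{Lemma-LocFinite}). By Lemma \ref{Lemma-Isom-Multicurves}, $\varphi(P)$ is a pants decomposition of $N_{2}$, and by Corollary \ref{Cor-Isom-AdjGraph} the induced map $\varphi_{P}: \adj{P} \to \adj{\varphi(P)}$ is an isomorphism of simplicial graphs.

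Next, apply Remark \ref{Remark-Non-Outer} on both sides: $\alpha$ is a non-outer separating curve of $N_{1}$ if and only if $\alpha$ is a cut vertex of $\adj{P}$, and likewise $\varphi(\alpha)$ is a non-outer separating curve of $N_{2}$ if and only if $\varphi(\alpha) = \varphi_{P}(\alpha)$ is a cut vertex of $\adj{\varphi(P)}$. Since graph isomorphisms send cut vertices to cut vertices (this is purely combinatorial: removing $\alpha$ disconnects $\adj{P}$ exactly when removing $\varphi_{P}(\alpha)$ disconnects $\adj{\varphi(P)}$), the two conditions are equivalent. The reverse implication follows by applying the same argument to $\varphi^{-1}$.

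The only place that requires care is the invocation of Remark \ref{Remark-Non-Outer}, which implicitly assumes that cut vertices of $\adj{P}$ are exactly the non-outer separating curves in $P$: a non-separating curve or an outer separating curve of $P$ cannot be a cut vertex because one can always build an adjacency path avoiding it through the neighbouring pair of pants, while a non-outer separating curve of $P$ splits $N$, hence splits $\adj{P}$, into two infinite components. This dichotomy is independent of the chosen pants decomposition, which is what makes the reduction to cut vertices legitimate; once this is granted, the proof is essentially one line.
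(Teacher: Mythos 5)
Your proposal is correct and follows exactly the paper's route: the paper's proof is the one-liner ``this follows from Remark~\ref{Remark-Non-Outer} and the fact that $\varphi$ is an isomorphism,'' and your argument is that same reduction spelled out (extend $\alpha$ to a pants decomposition $P$, pass to $\adj{P}$ via Corollary~\ref{Cor-Isom-AdjGraph}, observe that graph isomorphisms preserve cut vertices, and read off the conclusion from Remark~\ref{Remark-Non-Outer}). One small inaccuracy in your parenthetical justification of the Remark: a non-outer separating curve need not split $\adj{P}$ into two \emph{infinite} components (e.g.\ it may bound a one-holed torus on one side), but this does not affect the argument since all that matters is that the two components are nonempty, and the Remark is taken as given in the paper anyway.
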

\begin{proof}
 This follows from Remark \ref{Remark-Non-Outer} and the fact that $\varphi$ is an isomorphism.
\end{proof}

Now, to prove Theorem \ref{Thm-Atalan-Korkmaz} we use the following theorem, which is an amalgamation of Theorem 2 in \cite{Atalan-Korkmaz}, Theorem 1 in \cite{Ivanov}, Theorem 1 in \cite{Korkmaz}, Theorem (a) in \cite{Luo}, and Theorem 1 in \cite{Atalan-Korkmaz}.

\begin{theorem}\label{Thm-Finite-type-Atalan-Korkmaz}
 Let $N_{1}$ and $N_{2}$ be two finite-type surfaces such that none of them are homeomorphic to any of the following surfaces: $S_{0,4}$, $S_{1,1}$, $S_{0,5}$, $S_{1,2}$, $S_{0,6}$, $S_{2,0}$. If $\phi: \ccomp{N_{1}} \to \ccomp{N_{2}}$ is an isomorphism, then $N_{1}$ and $N_{2}$ are homeomorphic and $\phi$ is induced by a homeomorphism $N_{1} \to N_{2}$.
\end{theorem}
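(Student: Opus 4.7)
The statement is an amalgamation of known finite-type rigidity results, so my plan is to split into cases according to the orientability of $N_{1}$ and $N_{2}$ and reduce each case to one of the cited references. The theorem has two independent assertions to verify: (a) $N_{1}$ and $N_{2}$ are homeomorphic, and (b) $\phi$ is induced by a homeomorphism. Both will come out of a single casework after first pinning down the orientability type.

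First I would address the possibility that $N_{1}$ and $N_{2}$ lie in different orientability classes. This must be ruled out by a simplicial invariant of $\ccomp{\cdot}$ that detects the existence of a one-sided simple closed curve. Concretely, a curve $\alpha$ is one-sided precisely when its regular neighborhood is a M\"obius band, whose boundary is a two-sided essential curve $\beta$ with the special property that $i(\alpha,\beta)=0$ and every curve disjoint from $\alpha$ is disjoint from $\beta$; this kind of link property can be formulated purely in terms of $\ccomp{N_i}$. Under the hypothesis that neither $N_{1}$ nor $N_{2}$ appears in the exceptional list, such a characterization is clean enough that $\phi$ must preserve the orientability type. I expect that Theorem 2 of Atalan--Korkmaz supplies precisely this distinction, together with the observation that the non-orientable genus of $N_i$ equals the simplicial rank of a suitable substructure of $\ccomp{N_i}$.

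With the orientability aligned, the rest is bookkeeping. If both $N_{1}$ and $N_{2}$ are orientable, the hypothesis excludes exactly the classical exceptional list $\{S_{0,4}, S_{1,1}, S_{0,5}, S_{1,2}, S_{0,6}, S_{2,0}\}$ to the Ivanov--Korkmaz--Luo theorem; thus the results of Ivanov (closed surfaces of genus at least two), Korkmaz (punctured orientable surfaces), and Luo (the remaining low-complexity orientable cases) together yield that $\phi$ is induced by a homeomorphism, and in particular $N_{1}\cong N_{2}$. If instead both $N_{1}$ and $N_{2}$ are non-orientable, Theorem 1 of Atalan--Korkmaz applies and gives the same conclusion.

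The main obstacle is the orientability-matching step: without it, a priori $\ccomp{N_{1}}$ for a non-orientable surface could conceivably be isomorphic to $\ccomp{N_{2}}$ for an orientable surface of different topology, and no single cited theorem taken in isolation rules this out. In writing the proof I would therefore be careful to check that the simplicial invariant distinguishing one-sided curves is well-defined on precisely the class of surfaces allowed by the exceptional list, since the hypothesis excluding those six small surfaces is presumably sharp for exactly this reason; the remaining invocations of Ivanov, Korkmaz, Luo, and Atalan--Korkmaz then slot in cleanly as black boxes.
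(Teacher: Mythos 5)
Your proposal takes essentially the same route as the paper, whose own ``proof'' of this statement is literally one sentence --- ``See the articles cited above'' --- and defers entirely to Ivanov, Korkmaz, Luo, and the two theorems of Atalan--Korkmaz that the paper lists before the statement. Your plan does the same, and the casework you sketch (first match orientability class via a simplicial invariant, then plug into the orientable-surface results or into Atalan--Korkmaz) is exactly the implicit structure behind the amalgamation.

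The one point worth flagging is that the orientability-matching step, which you correctly identify as the only non-bookkeeping content, is not quite captured by the characterization you write down as stated. The condition ``there exists $\beta$ disjoint from $\alpha$ such that every vertex disjoint from $\alpha$ is disjoint from $\beta$'' describes the pair $(\alpha,\beta)$ where $\beta$ bounds the M\"obius neighborhood of a one-sided $\alpha$, but one needs to check both (i) that no two-sided curve in a surface of the allowed complexity admits such a companion $\beta$, and (ii) that the orientable theorems you are using are stated for isomorphisms $\ccomp{N_1}\to\ccomp{N_2}$ rather than only for automorphisms of a fixed $\ccomp{N}$; otherwise you need a further topological-rigidity input just as in the non-orientable case (this is precisely the role of Theorem~2 in Atalan--Korkmaz on the non-orientable side, and its orientable analogue is folklore but not free). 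Your instinct that the exceptional list $\{S_{0,4},S_{1,1},S_{0,5},S_{1,2},S_{0,6},S_{2,0}\}$ is there exactly so that these characterizations are clean is correct. None of this is a gap relative to the paper, since the paper supplies no proof at all --- but if you were writing the argument out in full you would want to settle those two points explicitly rather than conjecture them.
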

\begin{proof}
 See the articles cited above.
\end{proof}

\begin{proof}[\textit{\textbf{Proof of Theorem \ref{Thm-Atalan-Korkmaz}}}]
 Let $\Sigma_{0} \subset \Sigma_{1} \subset \cdots N_{1}$ be a principal exhaustion of $N_{1}$, $B$ be the set o boundaries of this principal exhaustion, and for each $0 \leq i < \omega$ let $B_{i}$ the boundary curves of $\Sigma_{i}$.
 
 Since for each $0 \leq i < \omega$, $B_{i}$ is a set of non-outer separating curves, then $\varphi(B_{i})$ is composed solely of non-outer separating curves.
 
 Then, let $P \supset B$ be a pants decomposition of $N_{1}$; for each $i \in \mathbb{Z}^{+}$, note the following two facts:
 \begin{enumerate} 
  \item The set of curves from $P \backslash B$ contained in $\Sigma_{i}$ forms a pants decomposition of $\Sigma_{i}$. We denote this pants decomposition as $P_{i}$.
  \item By Remark \ref{Remark-Non-Outer} there is a finite subgraph of $\adj{P}$ delimited exactly by the cut vertices corresponding to $B_{i}$, and this finite subgraph corresponds to $P_{i}$.
 \end{enumerate}
 
 Given that for each $0 \leq i < \omega$, the set $\varphi(B_{i})$ is composed solely by non-outer separating curves, and using Lemma \ref{Lemma-Isom-Multicurves}, Corollary \ref{Cor-Isom-AdjGraph} and point (2) above, there exists (for each $0 \leq i < \omega$) a finite subgraph $\varphi(P_{i})$ in $\adj{\varphi(P)}$ delimited exactly by the cut vertices corresponding to $\varphi(B_{i})$. Again for each $0 \leq i < \omega$, by Remark \ref{Remark-Non-Outer}, there exists a finite-type closed subsurface $\Sigma_{i}^{\prime}$ bounded by $\varphi(B_{i})$.
 
 Recalling that for any $i = 1,2$ and any finite-type subsurface $\Sigma \subset N_{i}$, there is a natural embedding $\ccomp{\Sigma} \hookrightarrow \ccomp{N_{i}}$ induced by the inclusion, we denote the image of this inclusion in $\ccomp{N_{i}}$ also as $\ccomp{\Sigma}$. With this, for any $\alpha \in \mathcal{V}(\ccomp{\Sigma_{i}})$, either $\alpha \in P_{i}$ or there exists $\beta \in P_{i}$ such that $i(\alpha,\beta) \neq 0$. Since this is preserved by $\varphi$, we can induce a map $\varphi_{i}: \ccomp{\Sigma_{i}} \to \ccomp{\Sigma_{i}^{\prime}}$, defined as $\alpha \mapsto \varphi(\alpha)$; given that $\varphi$ is an isomorphism, we have that $\varphi_{i}$ is also an isomorphism for each $0 \leq i < \omega$.
 
 Using Theorem \ref{Thm-Finite-type-Atalan-Korkmaz}, we have that (for each $0 \leq i < \omega$) $\Sigma_{i}$ is homeomorphic to $\Sigma_{i}^{\prime}$ and there exists a homeomorphism $f_{i}: \mathrm{int}(\Sigma_{i}) \to \mathrm{int}(\Sigma_{i}^{\prime})$ such that $\varphi_{i} = f_{i}$. Given that for any $i < j$ we have that $\varphi_{i} = \varphi_{j}|_{\ccomp{N_{i}}}$, by the Alexander method we obtain that $f_{i} = f_{j}|_{\mathrm{int}(N_{i})}$. With this, we can define a map $f: N_{1} \to N_{2}$ as $x \mapsto f_{i}(x)$ if $x \in \mathrm{int}(\Sigma_{i})$. By construction this map is a homeomorphism that induces $\varphi$.
\end{proof}


\section{Topological generation of $\PMod{N}$}

The group $\Homeo{N}$ with the compact-open topology is a topological group; thus, $\Mod{N}$ inherits a very natural topological group structure with the quotient of the compact-open topology. In this work, we abuse language and refer to this topology of $\Mod{N}$ as ``compact-open topology'' too.

In the finite-type surface case, it not hard to see that $\Mod{N}$ becomes a discrete group with this topology (the Alexander Method is a simple way of seeing this). However, in the infinite-type case this does not happen; moreover, $\Mod{N}$ is not even locally compact (see \cite{AramayonaVlamis}). As such, $\Mod{N}$ (and its subgroups) becomes much more interesting from a topological-group viewpoint. In this section, we obtain several results considering $\Mod{N}$ with this topology; all these results have analogues in the orientable-surface case, and their proofs here are essentially the same but with subtle differences (see \cite{PatelVlamis} and \cite{AramayonaPatelVlamis}).


\subsection{$\Mod{N}$ is Polish}

Recall that a \emph{Polish group} is a topological group whose underlying space is Polish (a separable and completely metrizable space). We verify that $\Mod{N}$ is a Polish group.

Let $\Gamma$ be a graph with a countable set of vertices. We define a topology on $\Aut{\Gamma}$ as follows: for any finite vertex subset $A$ of the vertex set of $\Gamma$ define
$$ 
U_A = \{ f\in \Aut{\Gamma} : f(a)=a \text{ for all } a\in A\}.
$$
Then, the \emph{permutation topology} on $\Aut{\Gamma}$ is defined as the topology with basis the translated sets $f\cdot U_A$, where $A$ is a finite set of vertex of $\Gamma$ and  $f\in \Aut{\Gamma}$.  With this topology $\Aut{\Gamma}$ is separable and even more it is a Polish group (see Lemma 2.2 in \cite{AramayonaPatelVlamis}).

The curve graph $\ccomp{N}$ of an infinite type surface $N$ (orientable or not) has a countable set of vertex. A simple way of seeing this is to consider a principal exhaustion; each curve graph of these subsurfaces has a countable set of vertices and the union of all these sets of vertices is equal to the set of vertices of $\ccomp{N}$. Hence, we can equip $\Aut{\ccomp{N}}$ with the permutation topology. From Corollary \ref{Cor-Atalan-Korkmaz} we have that $\Mod{N} \cong \Aut{\ccomp{N}}$, and using the isomorphism $\Psi$ we can pullback the permutation topology to $\Mod{N}$. We call this topology the \emph{permutation topology} in $\Mod{N}$. Recall that this topology has as basis the $\Mod{N}$-translates of the sets 
$$
U_A = \{ f\in \Mod{N} : f(a)=a \text{ for all } a\in A\},
$$
where $A$ is any finite set of the vertices of $\ccomp{N}$. Then, we have that $\Mod{N}$ is a Polish group with the permutation topology. 

Using the Alexander method for finite surfaces (see Theorem \ref{Thm-AM-finite}) it can be proved that the compact-open topology is the same as the permutation topology in $\Mod{N}$, the proof for orientable surfaces works for the non-orientable ones too. For the sake of completeness we include a sketch here. 

First we make the following observation: Let $K$ and $U$ respectively be a compact set and an open set of $N$. Note that the set $[K,U] \ColonEqq \{f \in \Homeo{N}: f(K) \subset U\}$ is a sub-basic open neighborhood of $\Homeo{N}$ with the compact-open topology. As such, the set $\bnm{K,U} \ColonEqq \{f \in \Mod{N}: \exists F \in f, F(K) \subset U\}$ is a sub-basic open neighborhood of $\Mod{N}$.

\begin{lemma}\label{lemma:open-comp:permutation:same}
Let $N$ be a connected (possibly non-orientable) surface of infinite topological type. Then, the compact-open topology and the permutation topology in $\Mod{N}$ coincide.
\end{lemma}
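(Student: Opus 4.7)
The plan is to verify equality of the two topologies at the identity, which suffices since both make $\Mod{N}$ a topological group. I would argue the two inclusions separately.

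For the inclusion of the permutation topology in the compact-open topology, fix a basic open set $U_A$ of the identity, where $A=\{\alpha_1,\ldots,\alpha_k\}$ is finite. Endow $N$ with a hyperbolic metric and pick the geodesic representatives $a_1,\ldots,a_k$, which are pairwise in minimal position. For each $i$ choose a closed annular (resp.\ Möbius) neighborhood $V_i$ of $a_i$ depending on whether $a_i$ is two-sided or one-sided, shrunk if necessary so that the $V_i$ are pairwise disjoint and each $V_i$ contains no other element of $A$. Let $W_i$ be a slightly larger open regular neighborhood of the same topological type, and let $K_i = V_i$. I would then show that the compact-open neighborhood $\bigcap_i \bnm{K_i, W_i}$ lies in $U_A$: a representative that sends each $K_i$ into $W_i$ sends $a_i$ to an essential simple closed curve in $W_i$, and in an annular (resp.\ Möbius) neighborhood the only isotopy class of essential simple closed curves is the core, so $f(a_i)$ is isotopic to $a_i$ in $N$.

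For the opposite inclusion, fix a basic open set $\bnm{K, U}$ of the identity, so $K \subset U$. Using a principal exhaustion of $N$, choose a finite-type subsurface $\Sigma$ with $\pi_1$-injective inclusion and essential boundary in $N$ satisfying $K \subset \mathrm{int}(\Sigma) \subset \overline{\Sigma} \subset U$. Apply the finite-type Alexander method (Theorem~\ref{Thm-AM-finite}) to $\Sigma$ to obtain a finite collection $\Gamma_\Sigma$ of essential curves and boundary-anchored arcs that determines mapping classes of $\Sigma$ rel.\ $\partial\Sigma$. Convert each arc $\alpha \in \Gamma_\Sigma$ into a closed curve $\widehat\alpha$ in $N$ by concatenating $\alpha$ with a disjoint arc in a thin collar of $\partial\Sigma$ exterior to $\Sigma$, and let $A$ be the finite set consisting of the classes of the curves in $\Gamma_\Sigma$, the curves $\widehat\alpha$, and the components of $\partial\Sigma$. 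Given $f \in U_A$, the invariance of $\partial\Sigma$ lets us apply Lemma~\ref{Lemma-AM-Aux1} to realize $f$ by a representative $F$ fixing $\partial\Sigma$ pointwise; this $F$ restricts to a self-homeomorphism of $\Sigma$ preserving the class of every element of $\Gamma_\Sigma$ rel.\ $\partial\Sigma$, so Theorem~\ref{Thm-AM-finite} further adjusts $F$ by an isotopy supported in $\Sigma$ to make $F|_{\Sigma}$ the identity. Then $F(K) = K \subset U$, yielding $U_A \subset \bnm{K, U}$.

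The main technical obstacle is the bookkeeping required to translate between arcs, which are tracked by the finite-type Alexander method, and closed curves, which are the only objects the permutation topology sees; specifically, one must verify that fixing the closed-up curves $\widehat\alpha$ together with the boundary curves $\partial\Sigma$ genuinely forces the relative isotopy class of each arc $\alpha$ to be preserved, which is why the preliminary isotopy making $F$ equal to the identity on $\partial\Sigma$ is essential. A smaller point is guaranteeing in the first inclusion that the image $f(a_i)$ is essential in $W_i$ rather than null-homotopic; this is where choosing $W_i$ to be a regular neighborhood of $V_i$ of the same topological type, rather than an arbitrary open set, does the work.
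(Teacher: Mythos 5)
Your proof is correct and follows essentially the same route as the paper: one inclusion via sub-basic compact-open neighborhoods $\bnm{\alpha_i, N(\alpha_i)}$ built from annular or Möbius regular neighborhoods of the curves in $A$, and the reverse inclusion by enclosing $K$ in a finite-type subsurface $\Sigma$ and invoking the finite-type Alexander method (Theorem~\ref{Thm-AM-finite}) to upgrade a class in $U_A$ to a representative that is the identity on $\Sigma$. The extra care you take in closing up the arcs of the finite-type Alexander system into curves $\widehat\alpha$ and first normalizing on $\partial\Sigma$ via Lemma~\ref{Lemma-AM-Aux1} is a sound and slightly more detailed rendering of what the paper states concisely as ``$A\cap\Sigma$ is a collection of arcs and curves that satisfies the Alexander method.''
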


\begin{proof}[Sketch of proof.]
Let $\tau$ be the compact-open topology and $\tau'$ be the permutation topology. Let $U_A\in\tau'$ be a basic open of the identity $id$, and suppose $A = \{\alpha_{1}, \ldots, \alpha_{n}\}$. Consider for each $\alpha_i\in A$ the  basic open $V_i = \bnm{\alpha_i,N(\alpha_i)}$ where $N(\alpha_i)$ is a regular neighborhood of $\gamma_i$. Let $V = V_1\cap\cdots\cap V_n$, the set $V$ is a basic open of $id$ $\tau$, and $V = U_A$. Then $\tau'\subset \tau$.

Conversely, let $U\in \tau$ be a sub-basic open of $id$, we have that $U=\bnm{K,V}$ with $K$ and $V$ compact and open subsets of $N$ respectively. Let $\Sigma$  be a connected, compact subsurface of $N$ such that $K \subset \Sigma$. Let $A$ be a finite set of curves such that $A\cap\Sigma$ is collection of arcs and curves in $\Sigma$ that satisfies the Alexander method for finite-type surfaces (see Theorem \ref{Thm-AM-finite}). From the Alexander method we have that each $f\in U_A$ is isotopic to the identity in $\Sigma$. In particular, there exists $F\in f$ such that $F(K)=K\subset V$, hence $U_A \subset U$.

Therefore $\tau =\tau'$.
\end{proof}

Corollary \ref{Cor-Top-Group-Iso} now follows from lemma \ref{lemma:open-comp:permutation:same}. To finish this subsection we make a small note on convergence in $\Mod{N}$. 

Given $\Sigma$ a compact subsurface of $N$ and $f\in \Mod{N}$, the set $\{\, g \in \Mod{N} \mid \, g|_{\Sigma} = f|_{\Sigma} \,\}$ is open. Let $f \in \Mod{N}$ and $(f_{i})_{0 \leq i < \omega}$ be a sequence in $\Mod{N}$. By definition of the topology, the sequence $(f_{i})_{0 \leq i < \omega}$ converges to $f$ if and only if for all open neighborhoods $V$ of $f$ there exists $N \geq 0$ such that $f_{i} \in V$ for all $i \geq N$. Then, if $\Sigma_{1} \subset \Sigma_{2} \subset \cdots N$ is a principal exhaustion and $(f_{i})_{0 \leq i < \omega}$ converges to $f$, for all $i$ there exists an $n_i$ such that for all $j\geq n_i$ we have that $f_j |_{\Sigma_i} = f|_{\Sigma_i}$. 


\subsection{Handle-shifts}\label{section:handleshifts}

The concept of a handle-shift was introduced in  \cite{PatelVlamis}, here we recall their definition. Let $\oshiftsup$ the surface obtained by taking $\R \times [-1,1]$, removing the interior of each disk of radius $\frac{1}{4}$ with center in $(n,0)$ for each $n \in \Z$, and attaching a torus with one boundary component to each such a disk. If instead  of a torus we attach projective planes with one boundary component, we obtain a surface denoted by $\nshiftsup$. See Figure \ref{fig:oSigmanSigma}.

\begin{figure}[htb]
	\centering
	\includegraphics[scale = 1]{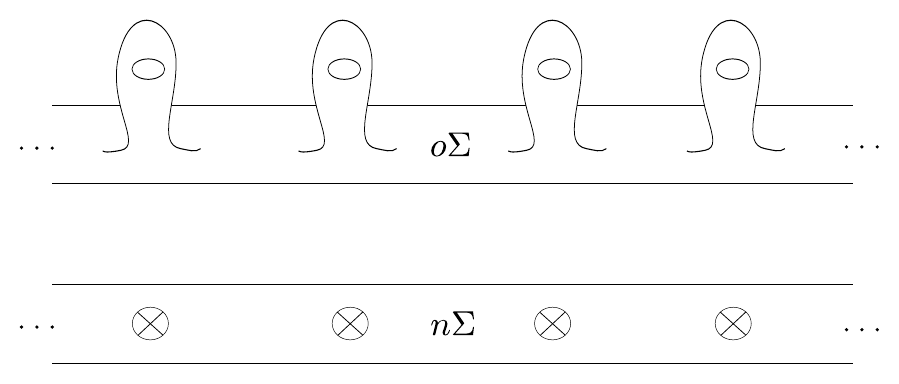}
	\caption{The surface $\oshiftsup$ above and the surface $\nshiftsup$ below.}
	\label{fig:oSigmanSigma}
\end{figure} 

Let $\fun{\sigma}{\oshiftsup}{\oshiftsup}$ be the homeomorphism up to isotopy determined by requiring 
\begin{enumerate}
	\item $\sigma(x,y) = (x+1,y)\,$ for $\,(x,y)\in \R\times [-1+\epsilon, 1- \epsilon]\,$ for some $\,\epsilon > 0$ and
	\item $\sigma(x,y) = (x,y)\,$ for $\,(x,y)\in \R\times \{-1,1\}$.
\end{enumerate}
Similarly, let $\fun{n\sigma}{\nshiftsup}{\nshiftsup}$ be the homeomorphism determined by similar conditions to that of $\sigma$.

\bigskip
Consider a surface $N$ with at least two non planar ends and let $\fun{h}{N}{N}$ be a homemorphism. If there is an embedding $\fun{\iota}{\oshiftsup}{N}$ inducing an injection $\fun{\iota_{*}}{\End(\oshiftsup)}{\End(N)}$ such that
$$
h(x) = 
\begin{cases}
(\iota \circ \sigma \circ \iota^{-1})(x) &  \mbox{if} \quad x\in \iota(\oshiftsup),\\
x  &  \mbox{otherwise,}
\end{cases}
$$
we say that $h$ is:
\begin{enumerate}
	\item an \textit{orientable handle-shift} if both elements of $\iota_{*}(\End(\oshiftsup))$ are orientable,
	\item a \textit{semi-orientable handle-shift} if exactly one element of $\iota_{*}(\End(\oshiftsup))$ is orientable,
	\item a \textit{pseudo-orientable handle-shift} if both elements of $\iota_{*}(\End(\oshiftsup))$ are not orientable.
\end{enumerate}

Finally we say that a homeomorphism $\fun{h}{N}{N}$ is a \textit{non-orientable handle-shift} if there exists an embedding $\fun{\iota}{\nshiftsup}{N}$ inducing an injection $\fun{\iota_{*}}{\End(\nshiftsup)}{\End(N)}$ and such that 
$$
h(x) = 
\begin{cases}
(\iota \circ n\sigma \circ \iota^{-1})(x) &  \mbox{if} \quad x\in \iota (\nshiftsup),\\
x  &  \mbox{otherwise.}
\end{cases}
$$

We also call a handle-shift to the mapping class associated to a handle-shift.  Notice that the power of a handle-shift is not a handle-shift, this can be proved using the Alexander method.  Also notice that in all cases the homeomorphism $h$ has an attracting and a repelling end denoted by $h^{+}$ and $h^{-}$ respectively, and they are invariant under isotopies (see \cite{AramayonaPatelVlamis}).

Now, let the \textit{ends graph of} $N$, denoted by $EG(N)$, be the simplicial graph whose vertex set is $\Endg(N)$, and such that two vertices $x,y$ span an edge if there exists a handle-shift $h$ with $\{x,y\} = \{h^{+},h^{-}\}$. Note that with the discourse above it is clear that $EG(N)$ is a complete connected graph. 

More recently, P. Patel and C.R. Abbott in \cite{AbbottPatel} have defined ``generalised handle-shifts'', as homeomorphisms constructed as above, with the difference that they consider $\oshiftsup$ to be possibly obtained by gluing to the infinite band $\mathbb{R} \times [-1,1]$ other surfaces besides a torus/projective plane with one boundary component.

\subsection{$\PMod{N}$ as a closed subgroup}

For the following theorem, recall that a curve $\alpha$ is called one-sided if its closed regular neighborhood is homeomorphic to a Möbius band, and it is called two-sided it if its closed regular neighborhood is an annulus.

\begin{theorem}\label{Thm-TopGen-PMod}
 Let $N$ be a connected (possibly non-orientable) surface of infinite topological type, and let $H$ be the set of all handle-shifts of $N$. Then, $\PMod{N} = \overline{\langle \PModc{N} \cup H \rangle}$.
\end{theorem}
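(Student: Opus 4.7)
The plan is to adapt the proof of Theorem~4 in \cite{PatelVlamis} to the non-orientable setting. Fix $f\in\PMod{N}$ and a principal exhaustion $\{N_i\}_{0\le i<\omega}$ of $N$. I will inductively construct a sequence $(\gamma_i)_{0\le i<\omega}$ in $\langle \PModc{N}\cup H\rangle$ such that for each $i$, the mapping class $\gamma_i^{-1}f$ admits a representative which is the identity on $N_i$. By Lemma~\ref{lemma:open-comp:permutation:same} and the characterization of convergence at the end of the preceding subsection, this gives $\gamma_i\to f$ and hence $f\in\overline{\langle \PModc{N}\cup H\rangle}$.

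For the inductive step, set $f' = \gamma_{i-1}^{-1}f$, which is the identity on $N_{i-1}$, and examine the finitely many boundary components $\beta$ of $N_i$. Each such $\beta$ is separating, and because $f'$ is pure, $f'(\beta)$ bounds together with $\beta$ (a union of) subsurfaces whose ends match those of the two sides of $\beta$; however, the finite genus and crosscap counts on each side of $f'(\beta)$ may differ from those on the same side of $\beta$, because a pure homeomorphism may shift handles or crosscaps across $\beta$ as long as every individual end is preserved. By the Kerékjártó--Richards classification recalled in Section~\ref{sec1}, these finite counts (together with the orientability class and the triple of ends) are the only topological invariants that could obstruct an ambient isotopy taking $f'(N_i)$ back to $N_i$ rel $N_{i-1}$.

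To cancel these shifts, for every $\beta\in\partial N_i$ I choose a product $h_\beta$ of handle-shifts whose attracting and repelling ends lie on opposite sides of $\beta$, of whichever of the four types (orientable, semi-orientable, pseudo-orientable, non-orientable) is forced by the orientability of the available genus-accumulated ends on each side; such handle-shifts exist because any nonzero shift across $\beta$ forces at least one genus-accumulated end on each side. Choosing all $h_\beta$ with disjoint supports contained in $N\smallsetminus N_{i-1}$, set $h_i = \prod_\beta h_\beta$. Then $h_i^{-1}f'$ preserves the genus and crosscap count on each side of every curve of $\partial N_i$, so by the classification there is a compactly supported $c_i\in\PModc{N}$ with support in a neighborhood of $N_i\cup h_i^{-1}f'(N_i)\smallsetminus N_{i-1}$ such that $(h_ic_i)^{-1}f'$ is the identity on $N_i$. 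Setting $\gamma_i = \gamma_{i-1}h_ic_i$ completes the induction. (When $N$ has at most one genus-accumulated end no shift across any $\beta$ is possible, so no $h_i$ is needed and the argument produces a sequence in $\PModc{N}$, recovering the other half of Theorem~\ref{Thm-PMod-TopGen-hi}.)

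The main obstacle will be the bookkeeping in the non-orientable case: one must verify that the available handle-shift types really do generate every shift that $f$ can perform across a given $\beta$, subject to the constraints imposed by the orientability class of $N$ (infinitely, odd, or even non-orientable, cf.\ Section~\ref{sec1}) and by the Dyck relation that a handle attached to a non-orientable piece becomes equivalent to three crosscaps. Concretely, one has to check that parities and total genus on each side of $\beta$ are preserved by both $f$ and by any product of handle-shifts, and that the subgroup generated by the four handle-shift types realises every admissible redistribution of handles and crosscaps across $\beta$. The analogous check in the orientable case of \cite{PatelVlamis} is essentially trivial since only one type of handle-shift exists; here it becomes the heart of the argument.
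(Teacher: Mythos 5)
Your plan matches the paper's: the paper likewise cites the Patel--Vlamis argument (Proposition 6.2 there, Theorem 4 in the version you cite) as the template and contents itself with listing four observations about the non-orientable modifications, without re-running the induction. The bookkeeping you flag as ``the heart of the argument'' --- confirming that the four handle-shift types together with $\PModc{N}$ realise every redistribution of handles and crosscaps that a pure class can perform across a separating curve $\beta\in\partial N_i$ --- is precisely what the paper's observations (3) and (4) are aimed at: compare the two complementary pieces by homeomorphism type rather than by genus alone, and view them as connected sums of a torus with zero, one, or two projective planes to read off which composition of handle-shifts realises the given shift. So you have the right inductive scheme and the right diagnosis of where non-orientability bites, and you leave open the same step the paper does. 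Two smaller remarks. First, the paper's observations (1)--(2) indicate that the Patel--Vlamis argument also passes through pants decompositions, where one-sided curves (which are non-separating and bound a single pair of pants) need separate treatment; your sketch does not touch this. Second, requiring each $h_\beta$ to have support disjoint from $N_{i-1}$ is too strong: a shifting strip crossing $\beta$ may have to run through $N_{i-1}$ to reach a genus-accumulated end on the far side of another boundary component of $N_i$. This is harmless but must be repaired, e.g.\ by post-composing with a compactly supported class that undoes the translation of the strip inside $N_{i-1}$, so that the resulting $\gamma_i$ still agrees with the identity there.
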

\begin{proof}The proof of this theorem is essentially the same  that Patel-Vlamis have for orientable surfaces in  Proposition $6.2$ of \cite{PatelVlamis}. We just need to take in account the following observations:

\begin{enumerate}
\item A pants decomposition of a non-orientable surface $N$ may contain one-sided curves, and one-sided curves bound exactly one pair of pants (instead of two pair of pants as two-sided curves do).
\item One-sided curves are not separating curves. Also, if $\alpha$ and $\gamma$ are two one side curves on a finite type surface $S$, there exist $f\in \PMod S$ such that $f(\alpha)=\gamma$.
\item Some steps of Patel-Vlamis proof involve comparing two subsurfaces  of $N$, say $V$ and $W$. They do it by comparing their genus. In the general case it is better to see whether $V$ and $W$ are homeomorphic or not.
\item In the last paragraph of Patel-Vlamis proof, for the general case, it is a good idea to consider the subsurfaces $V$ and $W$  as  connected sums of a torus plus zero, one or two projective planes. In this way it becomes more obvious which composition of handle-shifts is the mapping class $h$. 
\end{enumerate}
\end{proof}


\subsection{Definition of $\{h_{i}\}_{0 \leq i < r}$}\label{subsec:definehi}

Using Theorem \ref{Thm-TopGen-PMod}, we have that if $N$ has at most one end accumulated by genus, then $\PMod{N} = \overline{\PModc{N}}$, proving the first part of Theorem \ref{Thm-PMod-TopGen-hi}. Thus, for the rest of this section we assume that $N$ has at least two ends accumulated by genus, unless otherwise stated.

The purpose of this subsection is to construct a collection $\{h_i\}_{0 \leq i < r}$ of handle-shifts such that  $\PMod{N} = \overline{\langle \PModc{N} \cup \{h_{i}\}_{0 \leq i < r} \rangle}$ (see Theorem  \ref{Thm-PMod-TopGen-hi}). This construction is inspired by the one made in the proof of Theorem 5 in \cite{AramayonaPatelVlamis}: Given a surface $\Sigma$ with either empty boundary or compact boundary, we define $\widehat{\Sigma}$ as the surface obtained by forgetting the planar ends and capping the boundary components with disks. The idea is to find a collection of separating simple closed curves $\{\gamma_i\}_{0 \leq i < r}$ such that their homology classes generate $H^{sep}_{1}(\widehat{N} ; \Z)$  and assign to each one of this curves a handle-shift. 

However, unlike the orientable case treated in \cite{AramayonaPatelVlamis}, if $N$ is a surface with at least one orientable end and two non-orientable ends we cannot take just any such a collection of curves; the problem is that it is not clear how to build non-orientable handle-shifts using only $\PModcc{N}$, pseudo-orientable, semi-orientable and orientable handle-shifts, so we need to choose a \emph{good basis} of $H^{sep}_{1}(\widehat{N} ; \Z)$ in order to have in $\{h_{i}\}_{0 \leq i < r}$ the minimum number of non-orientable handle-shifts needed to generate all the non-orientable handle-shifts.

\subsubsection{A fixed model for $N$} \label{sec:model:N}

We start by fixing a  model of a given infinite-type surface $N$. Recall the following theorem of I. Richards.

\begin{theorem}[cf. Theorem 2 in \cite{IRichards}]\label{thm:construccion:superficie}
	Given a triple $\left(X,Y,Z \right)$ of  compact, separable, totally disconnected spaces $Z\subset Y\subset X$, there is a surface $\Sigma$ whose ends $\left(\End(\Sigma), \Endg(\Sigma), \Endn(\Sigma) \right)$ are topologically equivalent to the triple $(X,Y,Z)$.
\end{theorem}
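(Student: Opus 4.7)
The plan is to build $\Sigma$ concretely, starting from a planar surface whose end space is $X$ and then locally modifying it to introduce the required genus and non-orientability near the points of $Y$ and $Z$ respectively. First, I would use the fact that any compact, separable, totally disconnected metric space embeds as a closed subset of the standard Cantor set $\mathcal{C} \subset S^2$ to realize $X$ as such a subset. Setting $\Sigma_0 = S^2 \setminus X$ gives a planar surface with a canonical homeomorphism $\End(\Sigma_0) \cong X$ sending an exiting sequence to its unique accumulation point in $X$; at this stage $\Endg(\Sigma_0)$ and $\Endn(\Sigma_0)$ are empty.

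Next I would organise the surgery data combinatorially. I would fix a nested sequence of finite clopen partitions $\mathcal{P}_0 \prec \mathcal{P}_1 \prec \cdots$ of $X$ whose mesh tends to zero, and for each atom $P$ of $\mathcal{P}_k$ that meets $Y$ select a small embedded disk $D_P \subset \Sigma_0$ lying in a planar neighborhood of (only of) $P$, arranging that the family $\{D_P\}$ is pairwise disjoint and locally finite in $\Sigma_0$. For each selected atom $P$ meeting $Y \setminus Z$ I would cut out $D_P$ and glue in a torus with one boundary component (a handle); for each atom meeting $Z$ I would glue in a projective plane with one boundary component (a crosscap). Carrying out these surgeries simultaneously produces a well-defined surface $\Sigma$, because local finiteness of $\{D_P\}$ means that only finitely many surgeries affect any compact subset of $\Sigma_0$.

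Afterwards I would verify that the resulting triple $(\End(\Sigma), \Endg(\Sigma), \Endn(\Sigma))$ is topologically equivalent to $(X,Y,Z)$. Since each attached torus or projective plane is compact, no new ends are created, so the natural map $\End(\Sigma) \to \End(\Sigma_0) \cong X$ is a homeomorphism. An end $e \in Y$ is approached by atoms of arbitrarily small diameter that still meet $Y$, so every neighborhood of $e$ in $\Sigma$ contains infinitely many of the attached handles or crosscaps and thus has infinite genus; conversely an end $e \in X \setminus Y$ eventually has a neighborhood meeting no atom that contributed a surgery and therefore remains planar. The same argument with crosscaps in place of handles shows that $\Endn(\Sigma) = Z$, while ends in $Y \setminus Z$ remain orientable because they are approached only by handles.

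The main obstacle will be choosing the disks $\{D_P\}$ so that two competing requirements hold simultaneously: the family must be locally finite in $\Sigma_0$, yet each end of $Y$ (respectively $Z$) must be approached by cofinally many disks carrying the correct kind of topology. When $Y$ fails to be discrete in $X$, atoms of $\mathcal{P}_k$ may converge to many different points of $Y$ at once, and a naive ``one disk per end per scale'' choice breaks down. The clean way around this is to fix the nested partitions first and only then choose the disks atom-by-atom, using that each atom of $\mathcal{P}_k$ has a well-defined neighborhood in $S^2 \setminus X$ disjoint from the neighborhoods of all other atoms at level $k$; this reduces the apparently subtle end-space computation to transparent bookkeeping, after which the required equivalence of triples follows directly from the definitions in Richards' classification.
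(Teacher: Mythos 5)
Your proposal is correct and follows essentially the same approach as the paper, which attributes this theorem to Richards and then recalls his construction rather than reproving it: realize $X$ in the Cantor set inside $S^2$, and perform a locally finite family of handle and crosscap surgeries on $S^{2}\setminus X$ organized by a nested family of clopen partitions accumulating on $Y$ and $Z$ (Richards uses the explicit triadic-disk structure of the Cantor set for these partitions and glues Klein bottles rather than single projective planes, which only affects parity, not the end-space triple; your abstract version is the same idea). One small point worth making explicit: when an atom $P$ meets both $Y\setminus Z$ and $Z$ you should attach the non-orientable piece, prioritizing $Z$; your own observation that an end of $Y\setminus Z$ eventually lies in atoms disjoint from the closed set $Z$ already guarantees this choice does not spoil orientability at those ends.
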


In the proof of Theorem \ref{thm:construccion:superficie}, Richards gives an explicit construction of the surface $\Sigma$. So, given an infinite-type surface $N$  and using Richards' construction, we obtain a surface $\Sigma$ homeomorphic to $N$, by abuse of notation we denote it also by $N$. We recall Richards' construction here.

Recall that $\End(N)$ is homeomorphic to a subset of the Cantor set $\calC$. Embed the cantor set $\calC$ in the one point compactification of the plane as the set of all points $(x,0)$ such that $0\leq x\leq 1$ and $x$ admits a triadic expansion which does not involve the digit $1$. Let $\calD'$ be the collection of all closed disks in the plane whose diameters are the intervals in the $x$ axis 
$$\left[\frac{n-\frac{1}{3}}{3^m}, \frac{n+\frac{4}{3}}{3^m} \right], 
$$
with $m,n \in \Z$ such that $m\geq 1$, $\, 0\leq n \leq 3^m$ and $n$ admits a triadic expansion free from $1$'s. Let $\calD$ be the subcollection consisting of all disks in $\calD '$ which contain at least one point of $\End(N)$. For each disk $\kappa \in \calD$, let $\kappa_1$ and $\kappa_2$ the two largest disks in $\calD'$ properly contained in $\kappa$. Choose two circles $C^+(\kappa)$ and $C^-(\kappa)$ contained  in the interior of $\kappa$ such that:
\begin{enumerate}
	\item $C^+(\kappa)$ is contained in the upper half-plane and $C^-(\kappa)$ is contained in the lower half-plane.
	\item $C^+(\kappa)$ and $C^-(\kappa)$ do not intersect $\kappa_1$ and $\kappa_2$.
	\item $C^+(\kappa)$ and $C^-(\kappa)$ are symmetric with respect to the $x$ axis.
\end{enumerate}
Remove the interior of $C^{\pm}(\kappa)$ for all $\kappa \in \calD$ such that $\kappa \cap \Endg(N) \neq \emptyset$. If $\kappa \cap \Endn(N) = \emptyset$, then identify the boundaries of $C^+(\kappa)$ and $C^-(\kappa)$ by reflecting $C^+(\kappa)$ in the $x$ axis preserving orientation, i.e. add a handle. If $\kappa \cap \Endn(N) \neq \emptyset$, then identify $C^+(\kappa)$ and $C^-(\kappa)$ by translating $C^+(\kappa)$ onto $C^-(\kappa)$, i. e. add a Klein bottle.

If the surface $N$ is of finite genus orientable or non-orientable, in the above construction, add the finite number of handles or crosscaps that are needed. Similarly if $N$ is of infinite genus, but odd or even non-orientable add one or two crosscaps respectively.


\subsubsection{A family of curves $\calH$}\label{sec:fam:curves:H}

Let $N$ be an arbitrary infinite-type surface with empty boundary.  With the model of a surface described in Subsubsection \ref{sec:model:N},  we construct a family of pairwise disjoint and non-homologous separating simple closed curves $\calH = \{\gamma_i\}_{0 \leq i < r}$, with $1 \leq r \leq \omega$ and such that each connected component of the surface $N^\prime$ obtained from $N$ by removing pairwise disjoint regular neighborhoods of each $\gamma \in \calH$, has exactly one end accumulated by genus (orientable or non-orientable).

Recall that $H_{1}^{sep}(N ; \Z)$ denotes the subgroup of $H_{1}(N;\Z)$ that is generated by homology classes that can be represented by separating simple closed curves on the surface. Notice that Lemma $4.2$ of \cite{AramayonaPatelVlamis} is also valid for non-orientable surfaces, for the sake of completeness we enounce here this lemma. 

\begin{lemma}\label{Lemma-LimHsep}
	If $\{N_{i}\}_{0 \leq i < \omega}$ is a principal exhaustion of $N$, then 
	$$
	H_{1}^{sep}(N ; \Z) \, = \, \lim_{i \rightarrow \infty}H_{1}^{sep}(N_{i} ; \Z). 
	$$
	In particular, there exists $0 \leq n,m < \omega$ such that every non-zero element $\nu \in H_{1}^{sep}(N ; \Z)$ can be written as 
	$$\nu = \sum_{i=1}^{m} a_{i}\nu_{i} $$     
	where $a_{i}\in \Z $ and $\nu_{i}$ can be represented by a peripheral curve on $N_{i}$.
\end{lemma}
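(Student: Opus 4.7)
The plan is to follow the proof of Lemma~4.2 in~\cite{AramayonaPatelVlamis}, adapting it to the non-orientable setting. The argument splits into two parts: first, establishing that $H_{1}^{sep}$ commutes with the directed system coming from the exhaustion, and second, decomposing individual elements in terms of peripheral classes of the $N_{i}$'s.

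The inclusions $N_{i} \hookrightarrow N$ induce a compatible directed system of homomorphisms $H_{1}^{sep}(N_{i};\Z) \to H_{1}^{sep}(N;\Z)$: the key observation is that every boundary curve of $N_{i}$ is separating in $N$ by condition~(4) of a principal exhaustion, so any $\gamma \subset N_{i}$ that separates $N_{i}$ also separates $N$ (the complementary pieces of $N \setminus N_{i}$ are each attached to $N_{i}$ along a single separating boundary curve, and hence are partitioned cleanly between the two sides of $\gamma$). The direct limit identification $H_{1}^{sep}(N;\Z) = \varinjlim H_{1}^{sep}(N_{i};\Z)$ then follows from the compactness of singular chains: any element of $H_{1}^{sep}(N;\Z)$ is a finite integer combination of classes of separating simple closed curves, each of which is compact and so contained in some $N_{n}$ (giving surjectivity onto the colimit), while any $2$-chain realizing a homology relation between cycles in $N_{n}$ that holds in $N$ is itself compact and lies in some $N_{m}$ with $m \geq n$ (giving the injectivity part).

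For the ``in particular'' assertion, given $\nu = \sum_{j=1}^{k} a_{j}[\gamma_{j}]$ with each $\gamma_{j}$ separating in $N$, I pick $n$ large enough that every $\gamma_{j} \subset N_{n}$, and then express each class $[\gamma_{j}]$ as an integer combination of peripheral classes of some $N_{m}$ with $m \geq n$. In the orientable case this is immediate: one side of $\gamma_{j}$ in $N_{n}$ is a compact orientable surface whose boundary is $\gamma_{j}$ together with a subset of $\partial N_{n}$, so the ``boundary-is-null-homologous'' relation on that side supplies the decomposition.

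The principal obstacle, and the main divergence from~\cite{AramayonaPatelVlamis}, is the non-orientable case: for a non-orientable surface with boundary, the boundary class is not integrally null-homologous in general (the boundary of a M\"obius band is twice its core). When $\gamma_{j}$ has a non-orientable side in $N_{n}$, the classical orientable argument fails directly; I expect the workaround to consist of refining the index $n$ to some larger $N_{m}$ (supplied by condition~(3) of the principal exhaustion) so that either an orientable side of $\gamma_{j}$ becomes available, or the additional peripheral classes of $N_{m}$ combine with those of $N_{n}$ to absorb the extra cross-cap contributions from the non-orientable side. Verifying that the peripheral classes of the various $N_{m}$'s still $\Z$-span $H_{1}^{sep}(N;\Z)$ in this non-orientable setting is the delicate point of the proof.
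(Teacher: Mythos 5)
The paper does not actually supply a proof of this lemma: the authors remark only that ``Lemma 4.2 of \cite{AramayonaPatelVlamis} is also valid for non-orientable surfaces,'' so there is nothing in the text to compare your argument against line by line. With that caveat, here is an assessment of your attempt.

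Your treatment of the first assertion, $H_{1}^{sep}(N;\Z)=\varinjlim H_{1}^{sep}(N_{i};\Z)$, is correct. The observation that a curve $\gamma\subset N_{i}$ separates $N_{i}$ if and only if it separates $N$ (because each component of $N\setminus N_{i}$ is glued to $N_{i}$ along a single separating boundary curve, by condition (4) of the principal exhaustion) is exactly what makes the directed system of $H_{1}^{sep}$'s well-defined, and the compactness argument for surjectivity and injectivity onto the colimit is standard and correct.

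The gap is, as you yourself flag, in the ``in particular'' clause, and it is more serious than a detail to be filled in. For an \emph{orientable} finite-type subsurface $N_{i}$ the group $H_{1}^{sep}(N_{i};\Z)$ is literally spanned by the boundary classes $[\partial_{j}N_{i}]$ (a separating curve together with the boundary curves on one side cobound an orientable subsurface), so the ``in particular'' is an immediate consequence of the direct-limit statement. For a \emph{non-orientable} $N_{i}$ this fails: $H_{1}^{sep}(N_{i};\Z)$ also contains classes of the form $2\mu$ where $\mu$ is a one-sided curve class (take the boundary of a genus-three non-orientable subsurface built from a M\"obius band around $\mu$ and a handle; by Dyck's relation $[a,b]\mu^{2}$ its boundary class is $2\mu$, and the handle contributes nothing). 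These classes need not lie in the $\Z$-span of the peripheral classes of the $N_{j}$'s, and enlarging $n$ to $m$ does not help, because each $[\partial N_{m}]$ always sums over \emph{all} the cross-caps enclosed so far. Concretely, let $N$ be the Loch-Ness monster with two cross-caps added in the compact part (a one-ended, even non-orientable surface of infinite genus). For every $i$ in any principal exhaustion the single boundary class of $N_{i}$ equals $2\mu_{1}+2\mu_{2}$, so the peripheral classes span only a rank-one subgroup; yet $2\mu_{1}$ itself lies in $H_{1}^{sep}(N;\Z)$, being the boundary class of a genus-three non-orientable subsurface containing one cross-cap and one handle. So $H_{1}^{sep}(N;\Z)$ is strictly larger than the span of the peripheral classes, and the ``in particular'' clause, read literally with ``peripheral curve'' meaning a boundary curve of $N_{i}$, is not correct in the non-orientable setting.

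Thus your proof proposal is incomplete, and the incompleteness is not a matter of a routine refinement. You were right to call this ``the delicate point,'' but the workaround you sketch (passing to a larger $N_{m}$ so that orientable sides become available or cross-cap contributions are absorbed) does not in fact close the gap; in the example above no choice of $m$ makes one side of the relevant curve orientable, and the peripheral classes of all the $N_{m}$'s together still miss $2\mu_{1}$. Whatever the authors intend by this lemma, the second half of its statement needs to be either reformulated (for instance, in terms of separating curve classes supported in the $N_{i}$'s rather than peripheral ones, which is what the direct-limit statement actually gives) or proved by an argument genuinely different from the orientable one, and the blanket appeal to \cite{AramayonaPatelVlamis} in the paper does not resolve this.
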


It follows from Lemma \ref{Lemma-LimHsep} and the fact that $H_{1}^{sep}(\widehat{N};\Z)$ is a free abelian group, that there exists a collection of pairwise disjoint and non-homologous separating simple closed curves $\{\gamma_{i}\}_{0 \leq i <r}$ on $\widehat{N}$ such that 
$$
H_{1}^{sep}(\widehat{N} ; \Z) \,=\, \bigoplus_{0\leq i < r} \, \langle \nu_{i}\rangle 
$$  
where $\nu_{i}$ denotes the homology class of $\gamma_{i}$. Notice we can choose the curves $\gamma_i$ such that they do not intersect the boundaries and planar ends of $N$. Also notice that $r$ could be $\omega$, i.e. $H_{1}^{sep}(\widehat{N} ; \Z)$ could be infinitely generated.

If the surface $N$ is one of the following types
\begin{itemize}
	\item orientable,
	\item even or odd non-orientable,
	\item non-orientable and $\Endg(N)= \Endn(N)$,
\end{itemize}
we define $\calH$ as the family of pairwise disjoint and non-homologous separating simple closed curves such that their homology classes generate $H_{1}^{sep}(\widehat{N} ; \Z)$ (see above). If $N$ is not one of these cases, then $\Endn(N)\neq \emptyset$ and $\Endg(N) \neq \Endn(N)$. We define $\calH$ for this type of surfaces in the following paragraphs.

Suppose the set of ends of $N$ satisfies that $\Endn(N)\neq \emptyset$ and $\Endg(N) \neq \Endn(N)$. Consider a model for $N$ as the one described in Subsubsection \ref{sec:model:N} and for simplicity suppose $\widehat{N}=N$. We choose any non-orientable end $e_{-}^0 \in \Endn(N)$. Let $\kappa_0\in \calD$ be such that $e_{-}^{0}\in \kappa_0$ and there exist at least one more $\kappa \in \calD$ with the same diameter. Let $\kappa_1,\dots \kappa_{r_{1}} \in \calD$ be the disks different to $\kappa_0$ but  with the same diameter and denote by $\gamma_{1,1}^{0},\dots,\gamma_{1,r_{1}}^{0}$ their respective boundaries. Define
$$
\calH_{\gamma,1}^{0}= \{\gamma_{1,1}^{0},\dots,\gamma_{1,r_{1}}^{0}\}.
$$ 

By removing from $N$, pairwise disjoint regular neighborhoods of each $\gamma_{1,i}^{0} \in \calH_{\gamma,1}^{0}$, we obtain a surface $N^{0}$ with $r_{1}+1$ connected components, each of one has infinite genus and satisfies one of the following:
\begin{enumerate}[label= \alph*)]
	\item has only orientable ends,
	\item has only non-orientable ends,
	\item has both orientable and non-orientable ends.  
\end{enumerate}

Denote each connected component of $N^{0}$  by $N_{1,j}^{0}$ with $1\leq j \leq r_{1}+1$ and where the first $0 \leq t_{0}\leq r_{1}+1 $ satisfies c) and the other $r_{1}+ 1 -t_{0}$ connected components satisfy either a) or b).

If $t_0 < r_1 + 1$, for each subsurface $N_{1,j}^{0}$ with $t_{0} < j \leq r_{1} +1$  let $\calH_{1,j}^{0}$ be a family of  pairwise disjoint simple closed curves such that their homology classes generate $H_{1}^{sep}(\widehat{N}_{1,j}^0 ;\Z)$. The curves in $\calH_{1,j}^{0}$ can be chosen such that they are boundary curves of elements of $\calD$. Define
$$\calH^{0}= \calH_{\gamma,1}^{0} \cup  \left( \bigcup_{i= t_{0} +1}^{r_{1}+1}  \calH_{1,i}^{0} \right) 
$$

Let $n_1 = t_0$. If $n_1 = 0$ we are done and we define $\calH = \calH^0$. If $n_1 \geq 1$, for  $1\leq j \leq n_1$ 
rename each surface $N_{1,j}^0$ as $ N_{j}^1$. Repeating the algorithm, we obtain the following for each  $N_{j}^1$:  
\begin{enumerate}[label= \roman*)]
	\item A set 
	$$\calH_{\gamma,j}^{1} = \{ \gamma_{j,1}^{1},\dots, \gamma_{j,r_j}^{1} \}
	$$
	of separating simple closed curves which are boundaries of disks on $\calD$.
	\item A family of subsurfaces $N_{j,i}^{1}$ with $1 \leq  i \leq r_j +1$, and where the first $0 \leq t_{j}\leq r_j +1$ satisfy c) and the rest $r_{j}+1-t_{j}$ satisfy either a) or b).
	\item If $t_j < r_j + 1$, for each $N_{j,i}^{1}$ with $t_{j} < i \leq r_{j} +1$, a set $\calH_{j,i}^{1}$ of pairwise disjoint simple closed curves which are boundaries of disks on $\calD$ and their homology classes generate $H_{1}^{sep}(\widehat{N}_{j,i}^{1};\Z)$.   
\end{enumerate}

For each $1 \leq j \leq n_1$, if $t_j < r_j + 1$ let  
$$
\calH_{j}^1 \, = \, \calH_{\gamma,j}^1 \cup \left( \bigcup_{i=t_j +1}^{r_j +1} \calH _{j,i}^{1}\right)
$$
and if $t_j = r_j + 1 $, let $\calH_{j}^{1} = \calH_{\gamma,j}^{1}$. Define 
$$
\calH^1 \, = \, \calH^0 \cup \bigcup_{j=0}^{n_{1}} \calH_{j}^1.
$$

Let $n_2 = \sum_{j=1}^{n_1}t_j$. If $n_2 = 0$ we are done an let $\calH = \calH^{1}$. If $n_2 > 0$, rename all the subsurfaces $N_{j,i}^{1}$ with $1 \leq j \leq n_{1}$ and $1\leq i \leq t_{j}$  as $N_{j}^{2}$, with $1\leq j \leq n_{2}$. We can repeat the previous procedure to obtain a set of curves $\calH^2$. Continuing in this way we define
$$
\calH \, = \, \bigcup _{0 \leq i \leq n} \calH^i
$$
with $0 \leq n \leq \omega$; notice that since this algorithm may not finish in finite time, we have to allow the possibility of $n = \omega$. If $N\neq \widehat{N}$ we define $\calH$ as set of curves obtained by applying the above algorithm to $\widehat{N}$. 


\subsubsection{Construction of $\{h_{i}\}_{0 \leq i < r}$}\label{subsubsection:constructionhi}

\medskip

In Subsubsection \ref{sec:fam:curves:H}, for any infinite-type surface $N$ we obtained a family of curves $\calH$, in this subsection we associate a handle-shift to each $\gamma \in \calH$. The association is similar to the one made in the proof of Theorem 5 in \cite{AramayonaPatelVlamis}, for the sake of completeness we include it here. 

First denote by  $N^\prime$ the surface obtained by removing from $\widehat{N}$ pairwise disjoint regular neighborhoods of each $\gamma \in \calH$. We have the following observations: 
\begin{enumerate}[label= \roman*)]
	\item Each closed curve $\gamma \in \calH$ is the boundary of a disk $\kappa_{\gamma} \in \calD$.
	\item The homology classes of the closed curves of $\calH$ are linearly independent and generate $H_{1}^{sep}(\widehat{N}; \Z)$.  
	\item Each separating curve in $N^\prime$ bounds a compact surface.
	\item  Each connected component of $N^{\prime}$  has one end accumulated by genus.
	\item If $N$ has at least two non-orientable ends, let $N_{1}$ be a connected component of $N^{\prime}$ with non-orientable end. Then, there exist a connected component $N_2$ of $N^{\prime}$, different to $N_1$,  and  a curve $\gamma \in \calH$ such that: $N_2$ has a non-orientable end and, $N_1$ and $N_2$ have a boundary isotopic to $\gamma$ in $N$. This happens by construction of the familly $\calH$.
\end{enumerate}

For simplicity, suppose $N = \widehat{N}$ and put a sub-index to each curve of $\calH$, in other words $\calH = \{\gamma_{i} \}_{0\leq i < r}$ where $1 \leq r \leq \omega$. Recall that every connected component of $N^\prime$ is classified up to homeomorphism by its orientability class and the number of boundary components (see \cite{IRichards}).

As is described in \cite{AramayonaPatelVlamis}, let $Y$ be the surface obtained from $[0,1]\times [1,\infty) \subset \R^{2}$ by attaching periodically infinitely many tori as in the construction of $\oshiftsup$. Similarly, let $Y'$ be the surface obtained from $[0,1] \times [-1,-\infty) \subset \R^{2}$ by attaching periodically infinitely many projective planes.  

On the other hand, consider $\R^{2}$ and remove $n$ open disks centered along the horizontal axis. Attaching an infinite number of tori periodically and vertically above each removed disk we obtain the one-ended infinite-genus orientable surface with $n$ boundary components that in \cite{AramayonaPatelVlamis} is denoted by $Z_{n}$. If below each removed disk we also attach, periodically and vertically, an infinite number of projective planes we obtain the one ended infinitely non-orientable surface with $n$ boundary components $W_n$. If instead of attaching an infinite number of projective planes to $Z_n$ we attach an even (respectively odd) number we obtain the one-ended infinite-genus even non-orientable (respectively odd non-orientable) surface with $n$ boundary components that we denote by $eZ_{n}$ (respectively $oZ_{n}$).

Observe that there are $n$ disjoint embeddings of $Y$ into $Z_n$, $W_n$, $eZ_n$ and $oZ_n$, respectively such that in each one the image of $[0,1]\times \{1\} \subset Y$ is contained in a unique boundary component of the surface under consideration  and each boundary component contains only one of such images. In the surface $W_n$ we also have $n$ disjoint embeddings of $Y'$ satisfying similar conditions. 

Fix $\gamma_{i} \in \{\gamma_{i} \}_{0\leq i < r}$ and let $N_{1}$ and $N_{2}$ be the two connected components of $N'$ that have a boundary component homotopic to $\gamma_{i}$ in $N$. Denote this boundaries by $b_1 \subset N_1 $ and $b_2 \subset N_2$. We have the following three cases depending on the orientation of the ends of $N_1$ and $N_2$.

\begin{enumerate}
	\item If the end of $N_1$ and $N_2$ are both orientable, as is done in \cite{AramayonaPatelVlamis}, let $Y_{j}$ denote the image of $Y$ in $N_{j}$ intersecting $b_{j}$, for $j=1,2$. The intervals $Y_{1}\cap b_1$ and $Y_2 \cap b_2$ can be connected with a strip $T \cong [0,1]\times [0,1]$ in the regular neighborhood of $\gamma_{i}$. The surface $\oshiftsup$ can be embedded in $N$ with image $\oshiftsup_{i} = Y_1 \cup T \cup Y_2$. We have two orientable handle-shifts supported in $\oshiftsup_i$, choose one and denote it by $h_i$.
	\item If the end of $N_1$ is orientable and the end of $N_2$ is non-orientable (or vice versa), we also have an embedding of $\oshiftsup$ with image $\oshiftsup_{i} = Y_1 \cup T \cup Y_2$ but now we have two semi-orientable handle-shifts supported in $\oshiftsup_{i}$, again choose one and denoted it by $h_{i}$.
	\item If the end of $N_1$ and $N_2$ are both non-orientable, denote by $Y_{j}'$ the image of $Y'$ in $N_{j}$ intersecting $b_{j}$, $j=1,2$. Proceeding as in the previous cases we have now an embedding of $\nshiftsup$ in $N$ with image $\nshiftsup_{i} = Y_{1}' \cup T \cup Y_{2}'$ and in consequence we have two non-orientable handle-shifts supported in $\nshiftsup_{i}$, choose one and denote it by $h_{i}$.
\end{enumerate}

To each $\gamma_{i} \in \{ \gamma_{i}\}_{0 \leq i < r}$ we assign it the corresponding handle-shift $h_{i}$. Notice that:

\begin{enumerate}
	\item The support of $h_{i}$ intersect $\gamma_{j}$ if and only if $i = j$, hence $\langle h_{i}, h_{j} \rangle$ is a free abelian group, and its rank is 2 if and only if $i \neq j$.
	\item In the case $3$, there is also two pseudo-orientable handle-shifts, however we do not choose any one of them because it is not clear that doing that we can generate all $\PMod{N}$.
	\item For each $h_{i}$, the ends $h_i^{+}$ and $h_{i}^{-}$ span an edge of $EG(N)$ which by abuse of notation we also denote by $h_{i}$. The set of all this ends and edges form a maximal tree of $EG(N)$ denoted by $TEG(N)$. Even more, we give an arbitrary orientation to $EG(N)$ with the condition that every edge $h_{i}$ in $TEG(N)$ has initial vertex $\iota(h_{i}) = h^{-}_{i}$ and terminal vertex $\tau(h_{i}) = h^{+}_{i}$.
	\item The set of vertices of $EG(N)$ and edges $h_i$ that correspond to non-orientable ends and non-orientable handle-shifts respectively, form a subtree of $TEG(N)$ that we denote by $nTEG(N)$. 
\end{enumerate} 

Consider the abelian subgroup topologically generated by the handle-shifts  $\{h_{i}\}_{0\leq i < r}$ with the subgroup topology. It is not difficult to prove that this group is homeomorphic to the group $\Z^{r}$ with the product topology.

\subsection{Proof of Theorem \ref{Thm-PMod-TopGen-hi}}

\begin{figure}[htb]
	\centering
	\includegraphics[scale = 1]{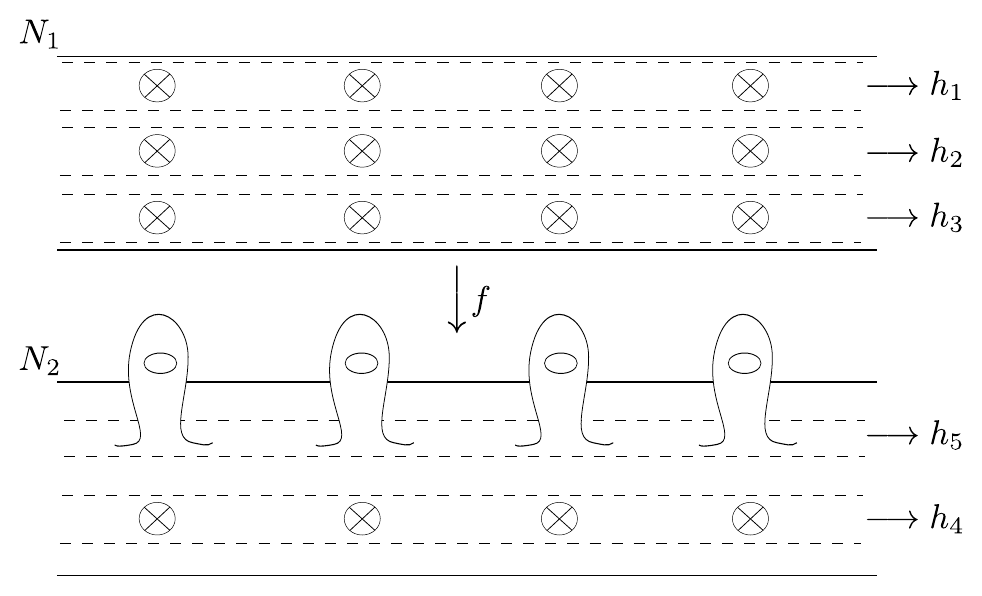}
	\caption{$ [ h_1h_2h_3] = f^{-1}[h_5h_4]f.$}
	\label{fig:5:handle:relation}
\end{figure} 

 Before we start the proof of Theorem \ref{Thm-PMod-TopGen-hi}, consider the homeomorphic surfaces $N_1$ and $N_2$ that are shown in Figure \ref{fig:5:handle:relation}. Let $f$ be the homeomorphism that sends a neighborhood of each column of three crosscaps on $N_1$ to a neighborhound of a handle and a crosscap in $N_2$ (the ones that are in the correponding column in Figure \ref{fig:5:handle:relation}). It is not hard to convince yourself that 
\begin{equation} \label{5:hshift:relation}
[ h_1 \circ h_2 \circ h_3] = f^{-1}[h_5\circ h_4]f.
\end{equation}
Notice that in $N_{1}$, $[f^{-1}\circ h_5  \circ f]$ is a pseudo-orientable handle-shift and  $[f^{-1}\circ h_4 \circ f]$ is a non-orientable handle-shift.

\medskip

By Theorem \ref{Thm-TopGen-PMod} $\overline{\langle \PModc{N} \cup \{h_i\}_{0 \leq i < r}   \rangle} \subseteq \overline{\langle \PModc{N} \cup H   \rangle}$, then to prove Theorem \ref{Thm-PMod-TopGen-hi} it is enough to prove that $H$ is in  $\overline{\langle \PModc{N} \cup \{h_i\}_{0 \leq i < r} \rangle}$. Given $h\in H$, the idea is to use $\PModcc{N}$ and $\{h_i\}_{0 \leq i < r}$ to build a handle-shift $\overline{h}$ with the same ends that $h$, then using $\PModcc{N}$ modify such $\overline{h}$ to get $h$. We do this in four cases.
\begin{description}
	\item[Non-orientable handle-shifts:] Suppose $h\in H$ is non-orientable. The attracting and repelling ends $h^+$ and $h^-$ are non-orientable. Then, in $nTEG(N)$ there exists a path $\gamma$ that goes from $h^-$ to $h^+$. Using the handle-shifts corresponding to edges of $\gamma$ and $\PModcc{N}$, we can construct a non-orientable handle-shift $\overline{h}$ such that $\overline{h}^+=h^+$ and $\overline{h}^- = h^-$. Finally, using $\PModcc{N}$ we can modify $\overline{h}$ to obtain $h$. Therefore $h\in \overline{\langle \PModc{N} \cup \{h_i\}_{0 \leq i <r}   \rangle}$.
	\item[Pseudo-orientable handle-shifts:] Suppose $h\in H$ is pseudo-orientable. The attracting and repelling ends $h^+$ and $h^-$ are non-orientable. We can build $h$ by using non-orientable handle-shifts with ends $h^+$ and $h^-$, $\PModcc{N}$ and relations of the type (\ref{5:hshift:relation}) above. As, by the previous case, all non-orientable handle-shifts are in $\overline{\langle \PModc{N} \cup \{h_i\}_{0\leq i <r}   \rangle}$, we have that $h\in \overline{\langle \PModc{N} \cup \{h_i\}_{0\leq i <r}   \rangle}$. 
	\item[Semi-orientable handle-shifts:] Suppose $h\in H$ is semi-orientable. Without loss of generality we can suppose that $h^+$ is orientable and $h^-$ is non-orientable. In $TEG(N)$ there exists a path $\gamma$ from $h^-$ to $h^+$ that first consists of non-orientable handle-shifts in $\{h_i\}_{0 \leq i < r}$, followed by one semi-orientable handle-shift $h_\gamma\in \{h_i\}_{0\leq i< r}$ and finally followed by orientable handle-shifts in $\{h_i\}_{0\leq i< r}$. Suppose that $h_{\gamma}^-$ is non-orientable (if not take $h_{\gamma}^{-1}$). Using $\PModcc{N}$, the orientable handle-shifts of $\gamma$, $h_{\gamma}$ and pseudo-orientable handle-shifts with ends $h^-$	and $h_{\gamma}^-$ we can construct a semi-orientable handle-shift $\overline{h}$	such that $\overline{h}^+=h^+$ and $\overline{h}^- = h^-$. Finally, using $\PModcc{N}$ we can modify $\overline{h}$ to get $h$. Therefore $h\in \overline{\langle \PModc{N} \cup \{h_i\}_{0 \leq i < r}   \rangle}$.
	\item[Orientable handle-shifts:] Suppose $h\in H$ is orientable. Let $\gamma$ be a path in $TEG(N)$ from $h^-$ to $h^+$. If all the vertices of $\gamma$ are orientable then all the edges correspond to orientable handle-shifts. By using these handle-shifts and $\PModcc{N}$ we can construct an orientable handle-shift $\overline{h}$ such that $\overline{h}^+=h^+$ and $\overline{h}^- = h^-$. By using $\PModcc{N}$ we can modify $\overline{h}$ to get $h$. If $\gamma$ has non-orientable vertices, then it has two semi-orientable handle-shift $h_{\gamma1}$ and $h_{\gamma2}$. Using $h_{\gamma1}$, $h_{\gamma2}$, pseudo-orientable handle-shifts and $\PModcc{N}$ we can build a handle-shift $\overline{h}$ that can be modified, using $\overline{\PModc{N}}$, to get h. Therefore $h\in \overline{\langle \PModc{N} \cup \{h_i\}_{0 \leq i < r}   \rangle}$.
\end{description}	

\medskip
\medskip

\section{Proof Theorem \ref{Thm-Semi-direct-prod} and Corollary \ref{Thm-First-cohom-grp}}
This section is dedicated to the proof of Theorem \ref{Thm-Semi-direct-prod} and Corollary \ref{Thm-First-cohom-grp}. For Theorem \ref{Thm-Semi-direct-prod}, we use the collection of handle-shifts $\{h_{i}\}_{0 \leq i <r}$ (constructed in the previous section) to define a group homomorphism
$$
\overline{\varphi} :\overline{\langle \PModc{N} \cup \{h_{i}\}_{0 \leq i < r}   \rangle} \longrightarrow  \Z^{r}.
$$    
The proof of  Theorem \ref{Thm-Semi-direct-prod} is a corollary from the fact that $\overline{\varphi}$ induces a short-exact sequence that splits.

Afterwards, we use the same argument used in \cite{AramayonaPatelVlamis} to prove Corollary \ref{Thm-First-cohom-grp}.


\subsection{ The homomorphism $\overline{\varphi}$. }
\medskip

In the following we continue using the same set of curves $\{\gamma_{i}\}_{0\leq i<r}$ and handle-shifts $\{h_{i}\}_{0\leq i<r}$ that were defined in the previous section.

Let $F(\PModc{N}\cup\{h_{i}\}_{0\leq i<r})$ be the free group generated by $\PModc{N}\cup\{h_{i}\}_{0\leq i<r}$, and let $e_{i} \in \Z^{r}$ be the sequence with $1$ in the $i$-th coordinate and $0$ everywhere else. We define a homomorphism $$\widetilde{\varphi}: F(\PModc{N} \cup \{h_{i}\}_{0\leq i<r}) \to \Z^{r},$$ defining $\widetilde{\varphi}(f) = \vec{0}$ for all $f \in \PModc{N}$, $\widetilde{\varphi}(h_{i}) = e_{i}$ for all $0 \leq i < r$, and extending via the universal property of free groups.

The next lemma proves that $\widetilde{\varphi}$ induces a homomorphism 
$$
\varphi : \langle \PModc{N} \cup \{h_{i}\}_{0\leq i<r} \rangle \longrightarrow \Z^{r}.
$$  
\begin{lemma}
 Let $\pi: F(\PModc{N} \cup \{h_{i}\}_{0\leq i<r}) \to \langle \PModc{N}, \{h_{i}\}_{0\leq i<r} \rangle$ be the canonical projection, and $w$ be a reduced word in $F(\PModc{N} \cup \{h_{i}\}_{0\leq i<r})$. If $\pi(w) = \id$, then $\widetilde{\varphi}(w) = \vec{0}$.
\end{lemma}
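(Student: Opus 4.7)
The goal is to show that for every $0 \leq i < r$ the $i$-th coordinate $a_i := \widetilde{\varphi}(w)_i$, which by construction is the signed count of occurrences of $h_i^{\pm 1}$ in $w$, vanishes. Two preliminary observations will drive the argument. First, inside $\Mod{N}$ the subgroup $\langle h_0, h_1, \ldots, h_{r-1} \rangle$ is abelian, since Theorem \ref{Thm-PMod-TopGen-hi} exhibits its closure as isomorphic to $\Z^r$ as a topological group; in particular the $h_j$'s commute pairwise. Second, for any $f \in \PModc{N}$ and any handle-shift $h_i$, the conjugate $h_i^{\pm 1} f h_i^{\mp 1}$ still lies in $\PModc{N}$: its support is the image under a homeomorphism of the compact support of a representative of $f$, hence compact, and conjugation of a pure mapping class by another pure one preserves the property of fixing all ends.

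Writing $w = g_1^{\epsilon_1} g_2^{\epsilon_2} \cdots g_k^{\epsilon_k}$ and working in $\Mod{N}$, I iteratively apply the rewriting
\begin{equation*}
f \cdot h_i^{\pm 1} \;=\; h_i^{\pm 1} \cdot \bigl( h_i^{\mp 1} f h_i^{\pm 1} \bigr)
\end{equation*}
to push each handle-shift factor past every $\PModc{N}$-factor sitting to its left, and then use commutativity of the $h_j$'s to collect the resulting handle-shift prefix in the order $h_0, h_1, \ldots, h_{r-1}$. This yields, as an equality in $\Mod{N}$,
\begin{equation*}
\pi(w) \;=\; h_0^{a_0} h_1^{a_1} \cdots h_{r-1}^{a_{r-1}} \cdot f_0
\end{equation*}
for some $f_0 \in \PModc{N}$, where each $a_j$ is precisely the signed exponent count of $h_j$ in the original word $w$, namely $a_j = \widetilde{\varphi}(w)_j$. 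The hypothesis $\pi(w) = \id$ therefore reads $h_0^{a_0} \cdots h_{r-1}^{a_{r-1}} = f_0^{-1} \in \PModc{N}$.

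The heart of the proof is the claim
\begin{equation*}
\langle h_0, h_1, \ldots, h_{r-1} \rangle \cap \PModc{N} \;=\; \{\id\}.
\end{equation*}
Granting this, $h_0^{a_0} \cdots h_{r-1}^{a_{r-1}} = \id$; since $\overline{\langle h_j \rangle} \cong \Z^r$ by Theorem \ref{Thm-PMod-TopGen-hi}, the discrete subgroup $\langle h_j \rangle$ is free abelian with basis $\{h_j\}$, forcing each $a_j = 0$ and hence $\widetilde{\varphi}(w) = \vec{0}$. To prove the claim, suppose that $H := h_0^{a_0} \cdots h_{r-1}^{a_{r-1}}$ belongs to $\PModc{N}$ and that some $a_i \neq 0$. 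By the construction of $\{h_j\}$ in Subsection \ref{subsec:definehi}, the supports of the $h_j$ with $j \neq i$ are disjoint from a neighborhood of the attracting end $h_i^+$ inside the shift strip of $h_i$, and a compactly supported representative of $H$ misses any curve sufficiently far out in that strip. Choosing an essential simple closed curve $\alpha$ deep enough in the shift strip of $h_i$, on the one hand $H(\alpha) = h_i^{a_i}(\alpha)$, which is not isotopic to $\alpha$ because $h_i^{a_i}$ translates $\alpha$ past $|a_i| \geq 1$ periodically-placed handles or crosscaps, and on the other hand $H(\alpha) = \alpha$ since $H$ fixes $\alpha$ pointwise. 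This contradiction establishes the claim.

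The main obstacle is precisely this last claim: that a non-trivial integer combination of distinct handle-shifts can never have compact support. While geometrically transparent, a careful proof requires exhibiting the curve $\alpha$ deep in the shift strip of an $h_i$ with $a_i \neq 0$ and confirming, via the Alexander method (Theorem \ref{Thm-AlexMethod}) applied to a compact subsurface of the strip containing $\alpha$ and $h_i^{a_i}(\alpha)$, that $h_i^{a_i}(\alpha)$ is not isotopic to $\alpha$.
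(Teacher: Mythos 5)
Your argument is correct and rests on the same core mechanism as the paper's proof: after separating the compactly supported contributions from the handle-shift contributions (you via conjugation and commutativity, the paper by restricting to the complement of a large compact set), the hypothesis $\pi(w)=\id$ forces the finite product $\prod h_i^{a_i}$ to be compactly supported, and the disjoint non-compact supports of the $h_i$ then force all $a_i = 0$. Your intermediate claim that $\langle h_0,\dots,h_{r-1}\rangle\cap\PModc{N}=\{\id\}$ is a clean repackaging of what the paper argues inline; no genuinely different idea is introduced.
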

\begin{proof}
 Without loss of generality, we can assume that $w = w_{0}h_{i_{1}}^{\varepsilon_{1}}w_{1} \cdots w_{n}h_{i_{n}}^{\varepsilon_{n}}w_{n+1}$ with $w_{i} \in \PModc{N}$ and $\varepsilon_{i} \pm 1$.
 
 Since $w$ has finite length, there exists a compact surface $\Sigma$ such that $\pi(w)|_{N \setminus \Sigma} = (h_{i_{1}}^{\varepsilon_{1}} \circ \cdots \circ h_{i_{n}}^{\varepsilon_{n}})|_{N \setminus \Sigma}$. And given that $\pi(w) = \id$, we have then that $(h_{i_{1}}^{\varepsilon_{1}} \circ \cdots \circ h_{i_{n}}^{\varepsilon_{n}})|_{N \setminus \Sigma} = \id|_{N \setminus \Sigma}$. The elements of $\{h_{i}\}_{0\leq i<r}$ have pairwise disjoint support, hence if for some $i$ there exists $i_{m}$ such that $h_{i} = h_{i_{m}}$, then there exists some $i_{k}$ such that $h_{i} = h_{i_{k}}$ and $\varepsilon_{m} = -\varepsilon_{k}$.
 
 Thus we have the following: $$\widetilde{\varphi}(w) = \widetilde{\varphi}(w_{0}) + (\varepsilon_{1})\widetilde{\varphi}(h_{i_{1}}) + \cdots + (\varepsilon_{n})\widetilde{\varphi}(h_{i_{n}}) + \widetilde{\varphi}(w_{n}) = (\varepsilon_{1})\widetilde{\varphi}(h_{i_{1}}) + \cdots + (\varepsilon_{n})\widetilde{\varphi}(h_{i_{n}}) = \vec{0}.$$
\end{proof}

Due to the previous lemma, we have that $\widetilde{\varphi}$ descends to a homomorphism: $$\varphi: \langle \PModc{N} \cup \{h_{i}\}_{0 \leq i <r} \rangle \to \Z^{r}.$$

Now, for each $0 \leq i < r$, let $\psi_{i} := \pi_{i} \circ \varphi$, where $\pi_{i}$ is the canonical projection to the $i$-th coordinate. This is obviously a homomorphism, but before we prove it is continuous we need an auxiliary lemma and a definition.

\begin{lemma}\label{lemma:hisubstitutedPmodc}
 Let $0 \leq i <r$ be fixed, and $A$ be a finite set of curves such that $A$ does not separate the ends of $N$ corresponding to the ends of $\supp(h_{i})$. Then for any $g \in \PMod{N}$, there exists $f \in \PModc{N}$ such that $h_{i}|_{g(A)} = f|_{g(A)}$.
\end{lemma}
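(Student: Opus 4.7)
The plan is to use the non-separation hypothesis to build a compactly supported homeomorphism that mimics the action of $h_{i}$ on the finite set $g(A)$. Since $g \in \PMod{N}$ preserves each end of $N$, the hypothesis passes from $A$ to $g(A)$: the two ends of $\supp(h_{i})$ lie in a single connected component of $N \setminus g(A)$. First, I would isotope $g(A)$ into minimal position with $\supp(h_{i})$ and fix a compact subsurface $W \subset \supp(h_{i})$ that contains both $g(A) \cap \supp(h_{i})$ and $h_{i}(g(A)) \cap \supp(h_{i})$, with $\partial W$ placed in ``clean'' regions between consecutive handles of the shift. Outside $\supp(h_{i})$ the map $h_{i}$ is the identity, so $g(A)$ and $h_{i}(g(A))$ agree there.

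Next, using the non-separation, I would enlarge $W \cup g(A)$ to a compact subsurface $K \subset N$ so that both ends of $\supp(h_{i})$ still lie in a single connected component of $N\setminus K$. Concretely, the common-component property yields an arc in $N \setminus g(A)$ joining the two ends of $\supp(h_{i})$; thickening this arc produces a ``bypass'' disjoint from $g(A)$, and $K$ can be chosen to include $g(A)$, $W$, and finite segments of $\supp(h_{i})$ on either side of $W$, while leaving the bypass in the complement. The bypass is what will let us realize a compactly supported analogue of the shift.

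The construction of $f$ itself: on a slightly larger compact subsurface $K' \supset K$, define $f$ to be a homeomorphism fixing $\partial K'$ pointwise that cyclically permutes the finitely many handles of $\supp(h_{i}) \cap K'$, chosen so that on $W$ it shifts each handle to the next position exactly as $h_{i}$ does, and closes up through the bypass on the far side of $K$. Extending $f$ by the identity off $K'$ yields an element of $\PModc{N}$; since $g(A)$ meets $\supp(h_{i})$ only inside $W$ and is disjoint from the bypass by construction, $f$ and $h_{i}$ induce the same action on the isotopy classes in $g(A)$. The main technical obstacle is precisely this cyclic-closure step: a naive truncation of $h_{i}$ to a compact piece of $\supp(h_{i})$ fails to be a homeomorphism because the shift moves points across any truncation boundary, and the non-separation hypothesis is what supplies the bypass needed to turn the truncation into a cyclic permutation of finitely many handles, thereby producing the required compactly supported $f$.
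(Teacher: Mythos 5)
Your proposal is correct and follows essentially the same strategy as the paper: both use the non-separation hypothesis to produce a bypass arc disjoint from $g(A)$ connecting the two sides of a compact truncation of $\supp(h_i)$, and then close up the truncated shift into a compactly supported homeomorphism by cyclically permuting finitely many handles (in the paper's phrasing, by sliding one handle through the bypass arc while shifting another by one), so that the resulting $f \in \PModc{N}$ agrees with $h_i$ on $g(A)$.
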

\begin{proof}
 Since $A$ does not separate the ends of $N$ corresponding to the ends of $\supp(h_{i})$ and $g \in \PMod{N}$, we have that $g(A)$ does not separate them too. If $g(A)$ is disjoint from $\supp(h_{i})$, then $f = \id$. So, suppose that $g(A)$ does intersect $\supp(h_{i})$.
 
 Let $K$ be a compact subset in $\overline{\supp(h_{i})}$ that contains the intersection of $g(A)$ with $\supp(h_{i})$ and has exactly one boundary component, and let $\Sigma_{1}$ and $\Sigma_{2}$ be two subsurfaces of $\supp(h_{i})$ homeomorphic to either a Möbius strip or a torus with a boundary component (depending if $h_{i}$ is non-orientable or not), such that $\Sigma_{1}$ and $\Sigma_{2}$ are both disjoint from $K$. Finally, let $a$ be an arc with endpoints in the boundary components of $\Sigma_{1}$ and $\Sigma_{2}$, such that $a$ is disjoint from $g(A)$. See Figure \ref{fig:2:hisubstitution} for an example.
 
 \begin{figure}[htb]
	\centering
	\includegraphics[width=15cm]{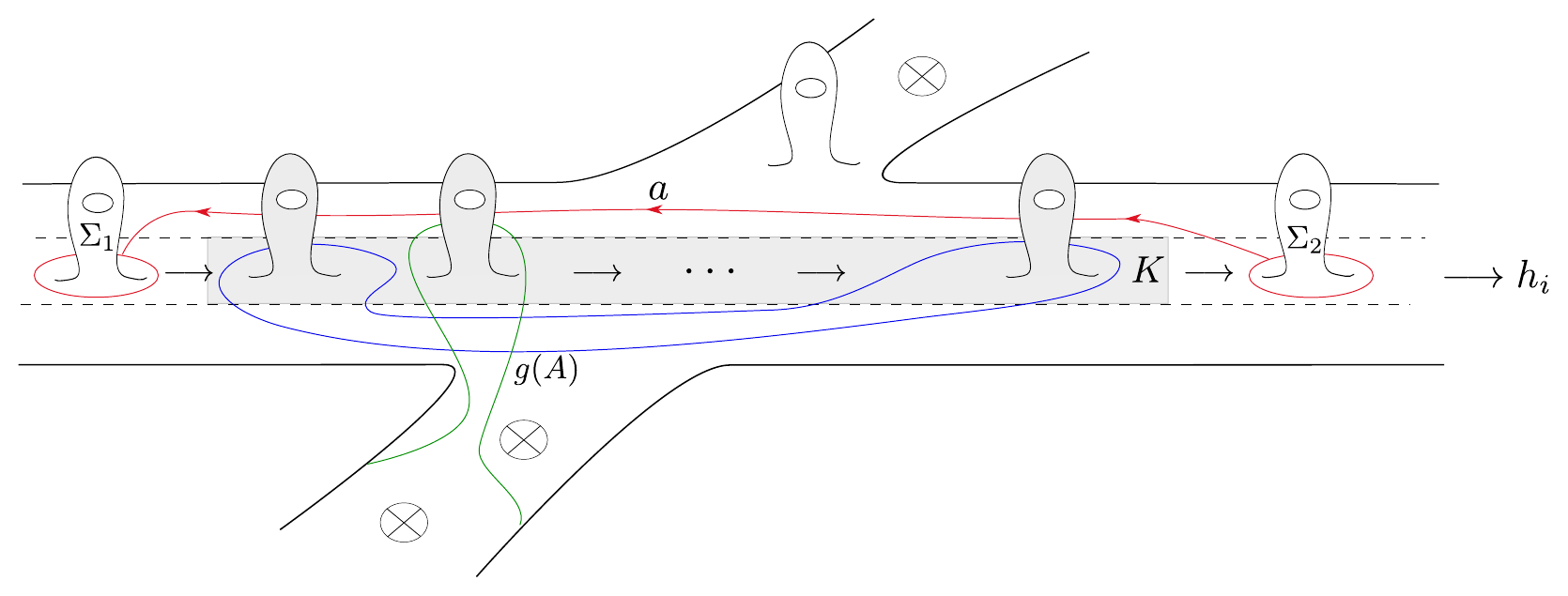}	\caption{How to substitute $h_{i}$ with an element of $\PModc{N}$.}
	\label{fig:2:hisubstitution}
 \end{figure} 
 
 We define $f$ as follows: slide $\Sigma_{2}$ through $a$ while shifting $\Sigma_{1}$ once in the direction that had originally $\Sigma_{2}$. See Figure \ref{fig:2:hisubstitution}. Thus, $f \in \PModc{N}$ and $h_{i}|_{g(A)} = f|_{g(A)}$ as desired.
\end{proof}

Recall that given a separating curve $\alpha$ in a finite-type surface $\Sigma$, the \textit{genus of} $\alpha$ is the minimum of the genus of the connected components of $\Sigma \setminus \alpha$. 

\begin{lemma}\label{lemma:psiicontinuous}
 For each $0 \leq i <r$, the homomorphism $\psi_{i}: \langle \PModc{N} \cup \{h_{j}\}_{0\leq j <r} \rangle \to \Z$ is continuous.
\end{lemma}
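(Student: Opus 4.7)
Since $\mathbb{Z}$ carries the discrete topology, the map $\psi_i$ is continuous if and only if $\ker(\psi_i)$ is open, i.e.\ (by Lemma~\ref{lemma:open-comp:permutation:same}) if there exists a finite set of curves $A$ on $N$ with
$$ U_A \cap \langle \PModc{N} \cup \{h_j\}_{0 \leq j < r} \rangle \subset \ker(\psi_i).$$
The plan is to take $A = \{\beta\}$ for a single separating curve $\beta \subset \supp(h_i) = \oshiftsup_i$ chosen so that $\beta$ bounds a one-ended neighborhood of the end $h_i^+$ containing all but finitely many of the handles (or crosscaps) of $\oshiftsup_i$ accumulating to $h_i^+$; denote this bounded one-ended region by $V^+_\beta$.

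Given $g \in U_A \cap \langle \PModc{N} \cup \{h_j\}_{0 \leq j < r} \rangle$, the first step is a normal-form argument. By the construction in Subsection~\ref{subsubsection:constructionhi} the supports $\{\supp(h_j)\}_{0 \leq j < r}$ are pairwise disjoint, so the $h_j$'s commute pairwise; moreover, conjugation by any $h_j$ preserves $\PModc{N}$, since it merely translates a compact support by a homeomorphism. Iterating these two observations on any word $g = f_0 h_{i_1}^{\varepsilon_1} f_1 \cdots h_{i_n}^{\varepsilon_n} f_{n+1}$, one can rewrite
$$ g \;=\; \tilde f \cdot h_0^{k_0} h_1^{k_1} \cdots h_{r-1}^{k_{r-1}},$$
with $\tilde f \in \PModc{N}$ and $k_j = \psi_j(g)$ for each $j$. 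Since $\beta \subset \supp(h_i)$ is disjoint from $\supp(h_j)$ for $j \neq i$, one has $h_j(\beta) = \beta$, hence $g(\beta) = \tilde f(h_i^{k_i}(\beta))$.

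The crux is then to show that $\tilde f(h_i^{k_i}(\beta))$ being isotopic to $\beta$ forces $k_i = 0$. For this, I will introduce an asymptotic shift invariant $\Psi_i \colon \PMod{N} \to \mathbb{Z}$: enumerate as $T_0, T_1, T_2, \ldots$ the handles (or crosscaps) of $\oshiftsup_i$ accumulating to $h_i^+$, and set $\Psi_i(g)$ to be the eventually-constant value of (the index of $g(T_n)$ in this enumeration) minus $n$. The key properties to verify are that $\Psi_i$ is a homomorphism with $\Psi_i(h_i) = 1$, that $\Psi_i$ vanishes on $\PModc{N}$ (a compactly supported homeomorphism induces a finite permutation of the $T_n$'s, with zero net shift at infinity), and that $\Psi_i(g) = 0$ whenever $g(\beta) \simeq \beta$. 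Granted these, the hypothesis $g(\beta) \simeq \beta$ yields $\Psi_i(g) = 0$, while the normal form gives $\Psi_i(g) = \Psi_i(\tilde f) + k_i\,\Psi_i(h_i) = k_i$, so $\psi_i(g) = k_i = 0$.

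The main obstacle is the rigorous definition of $\Psi_i$ and the verification that it depends only on the isotopy class of $g(\beta)$. I expect to carry this out concretely by comparing $g(V^+_\beta)$ against the reference $V^+_\beta$ via the signed handle count of the symmetric difference $V^+_\beta \bigtriangleup g(V^+_\beta)$; because $g$ is a finite word in $\PModc{N} \cup \{h_j\}_{0 \leq j < r}$ and each factor has support which is either compact or lies in a single $\oshiftsup_j$, this symmetric difference is contained in a compact subsurface, making the signed count a well-defined integer and yielding the desired invariance.
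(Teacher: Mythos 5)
Your proposal is correct in spirit, and it does take a somewhat different route from the paper's. The normal-form step is the genuine difference: you rewrite $g = \tilde f \cdot h_0^{k_0}\cdots h_{r-1}^{k_{r-1}}$ purely algebraically, using that $\PModc{N}$ is normalised by each $h_j$ and that the $h_j$'s commute, and then observe $g(\beta)=\tilde f(h_i^{k_i}(\beta))$ since $\beta$ meets only $\supp(h_i)$. The paper instead keeps the word in its original shape, invokes Lemma~\ref{lemma:hisubstitutedPmodc} to swap each $h_{i_m}$ with $i_m\neq i$ for a compactly supported map without changing the image of $\gamma_i$, and then uses the conjugation trick (which you also use) only to make all the remaining $h_i$-exponents share one sign. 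Your rewriting is cleaner and bypasses Lemma~\ref{lemma:hisubstitutedPmodc} entirely at this stage, which is a genuine simplification; it also yields all the $\psi_j(g)$ at once. After the reduction, both proofs come down to the same counting argument: the paper does an induction on the number of $h_i$-letters, comparing the genus of the relevant separating curve inside a large compact subsurface $K$ and showing each $h_i^\epsilon$ changes that genus by one while compactly supported factors preserve it, whereas you phrase the same content as a ``signed handle count'' of $V_\beta^+\bigtriangleup g(V_\beta^+)$. One caution on the phrasing: the map $\Psi_i\colon\PMod{N}\to\Z$ is not well defined for arbitrary $g\in\PMod{N}$ -- a general pure mapping class need not send the handles $T_n$ to handles in your enumeration, and there need not be an eventually constant shift. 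What you actually use, and what can be made rigorous, is only the value of the invariant on elements of the form $\tilde f\, h_i^{k_i}$ with $\tilde f\in\PModc{N}$; there the genus-in-$K$ comparison (which is exactly the paper's mechanism) gives the well-definedness and properties (a)--(c). So the right formulation is the one in your final paragraph, not the ``global $\Psi_i$ on $\PMod{N}$'' announced earlier. With that adjustment the argument is sound and streamlines the first half of the paper's proof.
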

\begin{proof}
 To prove that $\psi_{i}$ is continuous, it suffices to prove that $\ker(\psi_{i})$ is an open set, and for that purpose we prove that the open set $V = \bnm{\gamma_{i},N(\gamma_{i})} \cap \langle \PModc{N} \cup \{h_{j}\}_{0\leq j <r} \rangle$ (here $\gamma_{i}$ is the curve used to define $h_{i}$ in Subsubsection \ref{subsubsection:constructionhi} and $N(\gamma_{i})$ is a regular neighborhood of $\gamma_{i}$) is a neighborhood of the identity contained in $\ker(\psi_{i})$.
 
 Let $f \in V$. If $f \in \PModc{N}$ or $f = h_{j}$ with $j \neq i$, it is obvious that $\psi_{i}(f) = 0$.
 
 Now, let $f$ be of the form $f = w_{0} \circ h_{i_{1}}^{\varepsilon_{1}} \circ \cdots \circ h_{i_{n}}^{\varepsilon_{n}} \circ w_{n}$ with $\varepsilon_{j} = \pm 1$ for all $j$. Given that $\gamma_{i}$ does not separates the ends of the support of $h_{j}$ for $j \neq i$, by a repeated use of Lemma \ref{lemma:hisubstitutedPmodc} we can define $\widetilde{f}$ by substituting each $h_{i_{m}} \neq h_{i}$ with an element of $\PModc{N}$, obtaining the following properties:
 \begin{enumerate}
  \item $\widetilde{f}$ has the form $\widetilde{w}_{0} \circ h_{i}^{\epsilon_{1}} \circ \cdots \circ h_{i}^{\epsilon_{k}} \circ \widetilde{w}_{k}$  with $\epsilon_{j} = \pm 1$ for all $j$.
  \item $\psi_{i}(\widetilde{f}) = \psi_{i}(f)$.
  \item $f(\gamma_{i}) = \widetilde{f}(\gamma_{i})$.
 \end{enumerate}
 
 Note that if $k = 0$, we have that $\widetilde{f} \in \PModc{N}$, and by (2) above we have that $\psi_{i}(f) = 0$. So, we suppose that $k > 0$.
 
 Since for any $w \in \PModc{N}$ we have that $h_{i}^{\epsilon} \circ w \circ h_{i}^{-\epsilon} = w^{\prime}$ for some $w^{\prime} \in \PModc{N}$, we can assume that the form in (1) above satisfies that for all $1 \leq j \leq k-1$, $\epsilon = \epsilon_{j} = \epsilon_{j+1}$. Thus $|\psi_{i}(\widetilde{f})| = k$.
 
 We claim that $k \neq 0$ implies that $\widetilde{f} \notin V$ (and by (3) above $f \notin V$). Note that this claim immediately implies the lemma.
 
 We prove this claim using induction on $|k|$:
 
 If $k = 1$ and $\widetilde{f} \in V$, we have that $h_{i}^{\epsilon} \circ \widetilde{w}_{1} (\gamma_{i}) = \widetilde{w}_{0}^{-1}(\gamma_{i})$. Let $K$ be a compact subsurface that contains $\gamma_{i}$ and $\widetilde{w}_{0}^{-1}(\gamma_{i})$ as essential curves, and contains $\supp(\widetilde{w}_{0})$, $\supp(\widetilde{w}_{1})$ and $h_{i}^{\epsilon}(\supp(\widetilde{w}_{1}))$. Then $\gamma_{i}$, $\widetilde{w}_{0}^{-1}(\gamma_{i})$ and $\widetilde{w}_{1}(\gamma_{i})$ have the same genus in $K$. However, since $\widetilde{w}_{1}(\gamma_{i})$ separates the ends of the support of $h_{i}$, the difference between the genera of $h_{i}^{\epsilon}(\widetilde{w}_{1}(\gamma_{i}))$ and $\widetilde{w}_{1}(\gamma_{i})$ is 1. In particular, $h_{i}^{\epsilon}(\widetilde{w}_{1}(\gamma_{i}))$ and $\widetilde{w}_{0}^{-1}(\gamma_{i})$ cannot have the same genus in $K$, reaching a contradiction. Therefore, if $k = 1$, $\widetilde{f} \notin V$.
 
 Having as induction hypothesis that $k = j \geq 1$ implies that $\widetilde{f} \notin V$, suppose that $k = j+1$ and $\widetilde{f} \in V$. Let $K$ be a compact subsurface that contains $\{\widetilde{w}_{0}^{-1}(\gamma_{i}), \gamma_{i}, \widetilde{w}_{j+1}(\gamma_{i}), h_{i}^{\epsilon} \circ \widetilde{w}_{j+1}(\gamma_{i}), \ldots, h_{i}^{\epsilon} \circ \widetilde{w}_{1} \circ \cdots \circ \widetilde{w}_{j+1}(\gamma_{i})\}$ as essential curves, and contains $\supp(\widetilde{w}_{0}), \ldots, \supp(\widetilde{w}_{j+1})$ and all possible translations of them by the elements $\{h_{i}^{\epsilon}, \widetilde{w}_{j} \circ h_{i}^{\epsilon}, \ldots, h_{i}^{\epsilon}\circ \widetilde{w}_{1} \circ \cdots \circ \widetilde{w}_{j}\}$. By the induction hypothesis, the genera of $\widetilde{w}_{0}^{-1}(\gamma_{i})$ and $\widetilde{w}_{1} \circ \cdots \circ \widetilde{w}_{j+1}(\gamma_{i})$ are different; moreover, the difference between their genus is $j$. Since $\widetilde{w}_{1} \circ \cdots \circ \widetilde{w}_{j+1}(\gamma_{i})$ separates the ends of the support of $h_{i}$, we obtain that the difference between the genus of $h_{i}^{\epsilon} \circ \widetilde{w}_{1} \circ \cdots \circ \widetilde{w}_{j+1}(\gamma_{i})$ and $\widetilde{w}_{1} \circ \cdots \circ \widetilde{w}_{j+1}(\gamma_{i})$ is 1, increasing the difference between the genus of $h_{i}^{\epsilon} \circ \widetilde{w}_{1} \circ \cdots \circ \widetilde{w}_{j+1}(\gamma_{i})$ and $\widetilde{w}_{0}^{-1}(\gamma_{i})$ to $j+1$ (this is because all the $h_{i}$ have the same power). Thus, we reach a contradiction. This finishes the proof of the claim.
\end{proof}

The next step is to extend the homomorphism $\psi_{i}$, for all $0 \leq i < r$, to the closure of $\langle \PModc{N} \cup \{h_{i}\}_{0 \leq i < r} \rangle $. This is done in the following lemma.

\begin{lemma}
 For each $0 \leq i < r$, $\psi_{i}$ extends to a continuous group homomorphism: $$\overline{\psi}_{i}: \overline{\langle \PModc{N} \cup \{h_{i}\}_{0 \leq i <r}\rangle} \to \Z.$$
\end{lemma}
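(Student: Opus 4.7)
The plan is to define $\overline{\psi}_i$ on $\overline{G}$, writing $G := \langle \PModc{N} \cup \{h_{i}\}_{0\leq i<r}\rangle$, by taking limits along approximating sequences. Two ingredients make this work: $\PMod{N}$ is Polish by Corollary \ref{Cor-Polish}, hence metrizable, so closure is characterized sequentially; and the proof of Lemma \ref{lemma:psiicontinuous} furnishes a sub-basic open set $U := \bnm{\gamma_{i}, N(\gamma_{i})}$ in $\PMod{N}$ satisfying $U \cap G \subseteq \ker(\psi_{i})$.

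First I would show that for any sequence $(g_n) \subset G$ converging to some $g \in \overline{G}$, the integer sequence $\psi_i(g_n)$ is eventually constant. Joint continuity of the group operations in $\PMod{N}$ gives $g_n g_m^{-1} \to \id$ as $n, m \to \infty$, so for all sufficiently large $n, m$ one has $g_n g_m^{-1} \in U \cap G \subseteq \ker(\psi_i)$, hence $\psi_i(g_n) = \psi_i(g_m)$. Define $\overline{\psi}_i(g) := \lim_n \psi_i(g_n)$. To see this is independent of the chosen sequence, interleave two sequences $(g_n)$ and $(g_n')$ converging to $g$ into a single convergent sequence and apply the same argument to conclude that $\lim \psi_i(g_n) = \lim \psi_i(g_n')$.

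For the homomorphism property, if $g_n \to g$ and $g_n' \to g'$, then $g_n g_n' \to g g'$ by joint continuity, and the identity $\psi_i(g_n g_n') = \psi_i(g_n) + \psi_i(g_n')$ passes to the limit. For continuity, I would show that $\ker(\overline{\psi}_i)$ is open by checking $U \cap \overline{G} \subseteq \ker(\overline{\psi}_i)$: any $g$ in this intersection admits an approximating sequence $(g_n) \subset G$ with $g_n \to g$ that eventually lies in the open set $U$, hence in $U \cap G \subseteq \ker(\psi_i)$, so $\overline{\psi}_i(g) = 0$. Thus $\overline{\psi}_i$ vanishes on an open neighborhood of the identity, which forces continuity of the homomorphism.

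The main obstacle is the existence of the limit, which is essentially where the metrizable structure of $\PMod{N}$ is used to upgrade the pointwise continuity of $\psi_i$ at the identity (given by the previous lemma) into the uniform statement $g_n g_m^{-1} \to \id$ for Cauchy sequences. Once this is in place, well-definedness, the homomorphism property, and continuity are all formal consequences.
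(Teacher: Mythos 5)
Your proposal is correct and follows essentially the same strategy as the paper: both rely on the key inclusion $\bnm{\gamma_i, N(\gamma_i)} \cap G \subseteq \ker(\psi_i)$ from Lemma \ref{lemma:psiicontinuous} to show the integer sequence $\psi_i(g_n)$ stabilizes, define $\overline{\psi}_i$ by the eventual value, and check well-definedness by comparing two approximating sequences. The only cosmetic difference is that the paper rewrites $\bnm{\gamma_i, N(f(\gamma_i))}$ as a translate $f_M \cdot \bnm{\gamma_i, N(\gamma_i)}$ and then appeals to sequential continuity plus metrizability, whereas you phrase the stabilization via the Cauchy condition $g_n g_m^{-1} \to \id$ and show directly that the kernel of the extension is open; the underlying mechanism is the same.
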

\begin{proof}
 Let $f \in \overline{\langle \PModc{N} \cup \{h_{i}\}_{0 \leq i < r} \rangle} $ and $(f_{j})_{j=0}^{\infty}$ be a sequence of elements of the group $\langle \PModc{N} \cup \{h_{i}\}_{0 \leq i < r} \rangle$ such that 
$$
\lim_{j \rightarrow \infty} f_{j} = f.
$$ 

 Let $N(f(\gamma_{i}))$ a regular neighborhood of $f(\gamma_{i})$ and consider the open set $\bnm{\gamma_{i}, N(f(\gamma_{i}))}$ on $\overline{\langle \PModc{N} \cup \{h_{i}\}_{0 \leq i < r} \rangle}$. Fix $M \geq 0$ such that for all $j \geq M$ we have that $f_{j}\in \bnm{\gamma_{i}, N(f(\gamma_{i}))}$; note this implies that for all $j \geq M$, $f_{j}(\gamma_{i}) = f(\gamma_{i})$. Then, for all $j\geq M$
\begin{align*}
\bnm{\gamma_{i},\, N(f(\gamma_{i}))} \, & = \,  \bnm{\gamma_{i}, \, N(f_{j}(\gamma_{i}))}\\
            \, & = \, f_{j}\cdot\bnm{\gamma_{i}, \, N(\gamma_{i})}.
\end{align*}

In particular  $\bnm{\gamma_{i},\, N(f(\gamma_{i}))}\,=\,f_{M}\cdot\bnm{\gamma_{i},\, N(\gamma_{i})}$. As seen in Lemma \ref{lemma:psiicontinuous}, $\bnm{\gamma_{i},\, N(\gamma_{i})} \cap \langle \PModc{N} \cup \{h_{i}\}_{0 \leq i < r}  \rangle \subset \Ker \psi_{i}$, implying that:
$$
f_{M}\cdot \bnm{\gamma_{i},\, N(\gamma_{i})} \cap \langle \PModc{N} \cup \{h_{i}\}_{0 \leq i < r} \rangle \subset f_{M} \cdot \Ker \psi_{i}.
$$

 Then, for all $j\geq M$ the class $f_{j}$ is in $ f_{M}\cdot \Ker \psi_{i}$. Therefore $\psi_{i}(f_{j}) = \psi_{i}(f_{M})$ for all $j\geq M$, and consequently $(\psi_{i}(f_{j}))_{j=0}^{\infty}$ converges to $\psi_{i}(f_{M})$.
 
 We define $\overline{\psi_{i}}(f) $ as $  \psi_{i}(f_{M})$. The definition of  $\overline{\psi_{i}}$ is independent of the the sequence and thus, restricted to  $\langle \PModc{N} \cup \{h_{i}\}_{0 \leq i < r} \rangle$ is $\psi_{i}$: If $(g_{j})_{j=0}^{\infty}$ is another sequence of $\langle \PModc{N} \cup \{h_{i}\}_{0 \leq i < r} \rangle$ that converges to $f$, then it is eventually contained in $\bnm{\gamma_{i},\, N(f(\gamma_{i}))}$ by the same argument as above. This implies that the sequence $\{g_{j}^{-1}f_{j}\} \subset \langle \PModc{N} \cup \{h_{i}\}_{0 \leq i < r} \rangle$ is eventually contained in $\bnm{\gamma_{i},\, N(\gamma_{i})} \cap \langle \PModc{N} \cup \{h_{i}\}_{0 \leq i < r} \rangle \subset \Ker \psi_{i}$. Thus, the sequences $(\psi_{i}(g_{j}))_{j=0}^{\infty}$ and $\{\psi_{i}(f_{j})\}_{j=0}^{\infty}$ are eventually equal, which implies their limits are the same.

So, we have defined a sequentially continuous function $\overline{\psi_{i}}$. Given that both $\overline{\langle \PModc{N} \cup H \rangle}$ and $\langle h_{i} \rangle$ are metrizable, we have that $\overline{\psi_{i}}$ is continuous. The fact that it is a group homomorphism is a well-known fact of topological groups.
\end{proof}

Finally, we define: 
$$
\begin{array}{rccl}
\overline{\varphi} \,:\, & \PMod{N} = \overline{\langle \PModc{N} \cup \{h_{i}\}_{0 \leq i <r} \rangle} & \longrightarrow & \Z^{r}\\
 & & & \\
 & f & \longmapsto & \left( \overline{\psi_{i}}(f)\right)_{0 \leq i < r}. 
\end{array}
$$

This is obviously a continuous group homomorphism since $\Z^{r}$ has the product topology.

\subsection{The semi-direct product}

\begin{lemma}
 The kernel of $\overline{\varphi}$ is exactly $\overline{\PModc{N}}$.
\end{lemma}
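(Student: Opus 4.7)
The inclusion $\overline{\PModc{N}} \subseteq \ker(\overline{\varphi})$ is immediate: $\overline{\varphi}$ is continuous into the Hausdorff group $\Z^{r}$, its restriction to $\PModc{N}$ equals $\varphi|_{\PModc{N}} \equiv 0$, so $\ker(\overline{\varphi})$ is a closed set containing $\PModc{N}$, hence containing its closure.

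For the reverse inclusion, fix $f \in \ker(\overline{\varphi})$. By Theorem \ref{Thm-PMod-TopGen-hi} there is a sequence $(f_{n}) \subset \langle \PModc{N} \cup \{h_{i}\}_{0 \leq i < r}\rangle$ converging to $f$. Since the handle-shifts $h_{i}$ pairwise commute (their supports are pairwise disjoint, by the construction in Subsection \ref{subsec:definehi}) and since $h_{i} \PModc{N} h_{i}^{-1} \subseteq \PModc{N}$ (a homeomorphism takes compact support to compact support), each $f_{n}$ can be rewritten in the normal form $f_{n} = g_{n} \cdot \prod_{i} h_{i}^{a_{i,n}}$ with $g_{n} \in \PModc{N}$ and only finitely many exponents $a_{i,n}$ nonzero. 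Applying $\overline{\psi_{i}}$ yields $a_{i,n} = \overline{\psi_{i}}(f_{n}) \to \overline{\psi_{i}}(f) = 0$ for each fixed $i$; because $\Z$ is discrete, each $a_{i,n}$ is eventually $0$.

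To conclude that $f \in \overline{\PModc{N}}$, I would verify that for every finite set of curves $A$ there exists $g \in \PModc{N}$ with $g(\alpha) = f(\alpha)$ for all $\alpha \in A$; by the permutation-topology characterisation of convergence (see the discussion after Lemma \ref{lemma:open-comp:permutation:same}), this yields an approximating sequence in $\PModc{N}$. Given $A$, pick a compact subsurface $\Sigma$ whose interior contains representatives of both $A$ and $f(A)$, and set $I = \{\, i : \supp(h_{i}) \cap \Sigma \neq \varnothing \,\}$. Choose $n$ so large that $f_{n}(\alpha) = f(\alpha)$ for every $\alpha \in A$ and simultaneously $a_{i,n} = 0$ for every $i \in I$; the second condition is feasible once $I$ is known to be finite, because $a_{i,n}$ is eventually $0$ for each individual $i$. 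Under this choice, the factor $h := \prod_{i \notin I} h_{i}^{a_{i,n}}$ has support disjoint from $\Sigma$ and so fixes the isotopy class of every curve in $\Sigma$, whence $f(\alpha) = f_{n}(\alpha) = g_{n}(h(\alpha)) = g_{n}(\alpha)$ for every $\alpha \in A$, with $g_{n} \in \PModc{N}$, as required.

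The principal obstacle is establishing the finiteness of $I$, which amounts to local finiteness of the family $\{\supp(h_{i})\}_{0 \leq i < r}$. This should follow from the explicit construction of the $\gamma_{i}$ via Richards' model in Subsection \ref{sec:model:N}, since each $\gamma_{i}$ is the boundary of a disk $\kappa_{\gamma_{i}} \in \calD$ and these disks shrink toward $\End(N)$; together with the fact that distinct supports are disjoint, this should preclude any compact set from meeting infinitely many of them. Everything else in the argument — the normal form, the discrete convergence $a_{i,n} \to 0$, and the conclusion via the permutation topology — is then fairly mechanical.
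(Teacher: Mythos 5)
Your argument is correct, and it takes a genuinely different route from the paper's. The paper proceeds by choosing, for each $i$, an index $M_i$ so that $f_{M_i}$ lies in $U_i\cap V_i$, writes $f_{M_i}$ as a word in $\PModc{N}\cup\{h_m\}_{m>i}$ via the conjugation observation, and then runs an explicit right-to-left algorithm that replaces each remaining handle-shift letter by a compactly supported homeomorphism using Lemma~\ref{lemma:hisubstitutedPmodc}; the resulting $\widetilde{f}_i\in\PModc{N}$ agree with $f$ on $\Sigma_i$. Your proof instead pushes all handle-shift factors to the right (using the same normality observation $h_i\,\PModc{N}\,h_i^{-1}\subseteq\PModc{N}$ together with the commutativity coming from disjoint supports) to obtain the normal form $f_n=g_n\cdot\prod_i h_i^{a_{i,n}}$, reads off $a_{i,n}=\overline{\psi}_i(f_n)$, and uses discreteness of $\Z$ to make the relevant exponents vanish; the surviving $h_i$'s have support disjoint from the chosen compact $\Sigma$, so $g_n$ already matches $f$ on $A$ without any substitution. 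This is cleaner within the present lemma, and in particular it sidesteps Lemma~\ref{lemma:hisubstitutedPmodc} here (that lemma is of course still needed earlier, in Lemma~\ref{lemma:psiicontinuous}). The local-finiteness concern you flag is legitimate but is not a gap specific to your argument: the paper implicitly relies on the very same fact when it asserts the existence of a principal exhaustion $\{\Sigma_j\}$ with $\Sigma_j\cap\supp(h_i)=\varnothing$ for all $i>j$, which is exactly local finiteness of the family $\{\supp(h_i)\}$. Both proofs take this for granted from the construction of $\calH$ in Subsection~\ref{sec:fam:curves:H} (each component of $N'$ having finitely many boundary components, with $\{\gamma_i\}$ a locally finite multicurve), and it would be worth making that point explicit in either version.
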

\begin{proof}
 A key observation for the proof of this lemma is the following: for any $i$, $w_{j} \in \PModc{N}$ and $\epsilon = \pm 1$, we have that $$h_{i}^{\epsilon} \circ w_{0} \circ h_{i_{0}}^{n_{0}} \circ \cdots \circ w_{k-1} \circ h_{i_{k}}^{n_{k}} \circ w_{k} \circ h_{i}^{-\epsilon} = \widetilde{w}_{0} \circ h_{i_{0}}^{n_{0}} \circ \cdots \circ \widetilde{w}_{k-1} \circ h_{i_{k}}^{n_{k}} \circ \widetilde{w}_{k},$$ for some $\widetilde{w}_{j} \in \PModc{N}$.
 
 Thus, if $f \in \ker(\psi_{i})$ we can write $f$ as a word in $(\PModc{N} \cup \{h_{j}\}_{0 \leq j < r}) \setminus \{h_{i}\}$.
 
 Now, it is obvious that $\overline{\PModc{N}} \subset \ker(\overline{\varphi})$, so we only need to prove that $\ker(\overline{\varphi}) \subset \overline{\PModc{N}}$.
 
 Let $f \in \ker(\overline{\varphi})$, and $(f_{j})_{j=0}^{\infty} \subset \langle\PModc{N} \cup \{h_i\}_{0\leq i<r}\rangle$ be a sequence converging to $f$; we prove that $f \in \overline{\PModc{N}}$ by exhibiting a sequence $(\widetilde{f}_{j})_{j=0}^{\infty} \subset \PModc{N}$ that converges to $f$.
 
 For each $0 \leq i < r$ we define the following open neighborhoods of $f$:
 \begin{itemize}
  \item $U_{0}:= \ker(\overline{\psi}_{0})$.
  \item For $i >0$, $U_{i} := U_{i-1} \cap \ker(\overline{\psi}_{i})$.
 \end{itemize}
 
 Since $f_{j} \rightarrow f$ as $j \rightarrow \infty$, for each $i$ there exists $N_{i} \geq 0$ such that for all $j \geq N_{i}$ we have that $f_{j} \in U_{i}$. Thus, we may assume that for $N_{i} \leq j < N_{i+1}$ we can write $f_{j}$ as a word in $\PModc{N} \cup \{h_{m}\}_{i+1 \leq m <r}$.
 
 Let $\Sigma_{0} \subset \Sigma_{1} \subset \cdots \subset N$ be a principal exhaustion of $N$ (see Section \ref{sec1} to recall the definition) such that for all $j \geq 0$ and all $i > j$, $\Sigma_{j} \cap \supp(h_{i}) = \varnothing$. For each $j \geq 0$, let $A_{j}$ be a finite set of curves such that $A_{j} \cap \Sigma_{j}$ is a collection of arcs and curves in $\Sigma_{j}$ that satisfies the Alexander method for finite-type surfaces (see Theorem \ref{Thm-AM-finite}), that is for any $g,h \in \Mod{N}$ we have that if $g|_{A_{j}} = h|_{A_{j}}$, then $g|_{\Sigma_{j}} = h|_{\Sigma_{j}}$. For each $j \geq 0$, we define $V_{j} = \cap_{\alpha \in A_{j}} \bnm{\alpha, N(f(\alpha))}$, where $N(\alpha)$ is a regular neighborhood of $\alpha$; since every $A_{j}$ is a finite set, $V_{j}$ is an open set.
 
 Then, for each $i \geq 0$ we define the open set $W_{i} = U_{i} \cap V_{i}$. There exists $M_{i} \geq N_{i} \geq 0$ such that for all $j \geq M_{i}$ we that $f_{j} \in W_{i}$. Note this implies that $f_{j} = w_{j,0} \circ \cdots \circ w_{j,k_{j}}$, where $w_{j,n} \in \PModc{N} \cup \{h_{m}\}_{i+1 \leq m <r}$, and $f_{j}|_{\Sigma_{i}} = f|_{\Sigma_{i}}$.
 
 Now, for each $i \geq 0$ we take $f_{M_{i}} = w_{M_{i},0} \circ \cdots \circ w_{M_{i},k_{M_{i}}}$. We can assume that $w_{M_{i},k_{M_{i}}} = \id$ without any loss in generality, and we do the following algorithm:
 \begin{itemize}
  \item The cycle starts with $j = k_{M_{i}}$, at the end of the instructions decrease $j$ by one and the cycle ends when $j = -1$.
  \begin{itemize}
   \item If $w_{M_{i},j} \in \PModc{N}$: We define $\widetilde{w}_{i,j} := w_{M_{i},j}$.
   \item Else: We have that $w_{M_{i},j} \in \{h_{m}, h_{m}^{-1}\}$ for some $m > i$. Recalling that $A_{i}$ does not separate the ends of $\supp(h_{m})$, by Lemma \ref{lemma:hisubstitutedPmodc} there exists $h \in \PModc{N}$ such that $h|_{\widetilde{w}_{i,j+1} \circ \cdots \circ \widetilde{w}_{i,k_{M_{i}}} (A_{i})} = w_{M_{i},j}|_{w_{M_{i},j+1} \circ \cdots \circ w_{M_{i},k_{M_{i}}}(A_{i})}$. Then we define $\widetilde{w}_{i,j} := h$. This implies that $\widetilde{w}_{i,j} \circ \cdots \circ \widetilde{w}_{i,k_{M_{i}}}|_{A_{i}} = w_{M_{i},j} \circ \cdots \circ w_{M_{i},k_{M_{i}}}|_{A_{i}}$.
  \end{itemize}
  \item Define $\widetilde{f}_{i} := \widetilde{w}_{i,0} \circ \cdots \circ \widetilde{w}_{i,k_{M_{i}}}$.
 \end{itemize}
 
 Note that $\widetilde{f}_{i} \in \PModc{N}$ and $\widetilde{f}_{i}|_{A_{i}} = f_{M_{i}}|_{A_{i}} = f|_{A_{i}}$, which implies that $\widetilde{f}_{i}|_{\Sigma_{i}} = f|_{\Sigma_{i}}$. Hence $\widetilde{f}_{i}\rightarrow f$, and $f \in \overline{\PModc{N}}$.
\end{proof}

So, we have the following short exact sequence of groups
$$
\xymatrix{
1 \ar[r] & \overline{\PModc{N}} \ar[r] & \PMod{N} \ar[r] & \Z^{r} \cong \overline{\langle h_{i} : 0 \leq i <r \rangle} \ar[r] & 1,
}
$$
which naturally splits as the product topology of $\Z^{r}$ coincides with the subgroup topology of $\prod_{i=1}^{r} \, \langle h_{i} \rangle$. Therefore 
$$\PMod{N} = \PModcc{N} \rtimes \prod_{i = 1}^{r} \langle h_{i} \rangle$$
finishing the proof of Theorem \ref{Thm-Semi-direct-prod}.


\subsection{The first integral cohomology group}

As mentioned at the beginning of the section, the argument to prove Corollary \ref{Thm-First-cohom-grp} is analogous to the one presented in \cite{AramayonaPatelVlamis} for the orientable case. For the sake of completeness we present it anyway:

Let $N$ be an infinite-type surface, $\Sigma_{0} \subset \Sigma_{1} \subset \cdots \subset N$ be a principal exhaustion, and $\phi: \PMod{N} \to \Z$ be a homomorphism. We know that $\PModc{N} = \langle \, \bigcup_{0 \leq j < \omega} \PMod{\Sigma_{j}} \, \rangle$. This implies that $\PModc{N}_{ab} = \langle \, \bigcup_{0 \leq j < \omega} \PMod{\Sigma_{j}}_{ab} \, \rangle$. However, by the results of Stukow in \cite{Stukow2010}, for all $j \geq 0$ we have that $\PMod{\Sigma_{j}}_{ab}$ is a torsion group. Thus, $\PModc{N}_{ab}$ is generated by torsion elements, which implies that $\phi|_{\PModc{N}} \equiv 0$.

On one hand, since $\phi$ is a homomorphism from a Polish group to $\Z$, then by Theorem 1 in \cite{Dudley1961} we have that $\phi$ is continuous. Given that the restriction of $\phi$ to $\PModc{N}$ is constantly $0$, we have that $\phi|_{\overline{\PModc{N}}} \equiv 0$.

On the other, we have two possible cases depending on the number of ends of $N$ accumulated by genus: 
\begin{itemize}
 \item If $N$ has at most one end accumulated by genus, by Theorem \ref{Thm-PMod-TopGen-hi} we have that $\overline{\PModc{N}}  = \PMod{N}$. Thus $H^{1}(\PMod{N};\Z) = \mathrm{Hom}(\PMod{N},\Z)$ is trivial.
 \item If $N$ has at least two ends accumulated by genus, by Theorem \ref{Thm-Semi-direct-prod} we have that $\PMod{N} = \PModcc{N} \rtimes \prodhi$.\\ Thus, we have an isomorphism from $H^{1}(\PMod{N};\Z) = \mathrm{Hom}(\PMod{N},\Z)$ to $\mathrm{Hom}(\Z^{r},\Z)$ which, by the results from Specker in \cite{Specker} and the arguments from Blass and Göbel in \cite{BlassGoebel}, is known to be isomorphic to the free abelian group of rank $r$ $\displaystyle \bigoplus_{0\leq i <r} \Z$.
\end{itemize}

\bibliographystyle{plain}
\bibliography{bibliography}
\end{document}